\tikzset{node distance=2cm, auto}
\setlist[description]{font=\normalfont\bfseries\:\!}
\renewcommand\labelenumi{(\roman{enumi})}
\renewcommand\theenumi\labelenumi
\numberwithin{equation}{section}
\newtheorem{theorem}{Theorem}[section]
\newtheorem{proposition}[theorem]{Proposition}
\newtheorem{lemma}[theorem]{Lemma}
\newtheorem{corollary}[theorem]{Corollary}
\theoremstyle{definition}
\newtheorem{definition}[theorem]{Definition}
\newtheorem{example}[theorem]{Example}
\newtheorem{remark}[theorem]{Remark}
\newenvironment{content}[1][blank]{\noindent{\large\bfseries\scshape\MakeLowercase{#1}}.\thinspace\noindent}{\vskip .5cm}
\newenvironment{acknowledgements}{\begin{content}[Acknowledgements]}{\par\noindent\rule{\textwidth}{1pt}\end{content}}
\renewenvironment{abstract}{\begin{content}[Abstract]}{\end{content}}
\newcommand{\CategoryFont}[1]{\mathsf{\uppercase{#1}}}
\newcommand{\NTimes}[1]{#1\text{ times}}
\newcommand{\+}{\oplus}
\newcommand{\x}{\otimes}
\renewcommand{\.}{\cdot}
\newcommand{\1}{\mathds{1}}
\newcommand{\I}{\mathrm{I}}
\renewcommand{\=}{\colon\kern-1ex=}
\renewcommand{\epsilon}{\varepsilon}
\renewcommand{\o}{\circ}
\newcommand{\<}{\langle}
\renewcommand{\>}{\rangle}
\newcommand{\op}{\mathsf{op}}
\renewcommand{\*}{\ast}
\newcommand{\N}{\mathbb{N}}
\newcommand{\Z}{\mathbb{Z}}
\newcommand{\Q}{\mathbb{Q}}
\newcommand{\R}{\mathbb R}
\renewcommand{\ker}{{\mathsf{ker}\:}}
\newcommand{\id}{\mathsf{id}}
\newcommand{\X}{\mathbb{X}}
\newcommand{\E}{\mathscr{E}}
\newcommand{\End}{\CategoryFont{End}}
\newcommand{\cRing}{\CategoryFont{cRing}}
\newcommand{\Mod}{\CategoryFont{Mod}}
\newcommand{\cAlg}{\CategoryFont{cAlg}}
\newcommand{\Aff}{\CategoryFont{Aff}}
\newcommand{\Tngoid}{\CategoryFont{TOID}}
\newcommand{\sTngoid}{\mathsf{s}\CategoryFont{TOID}}
\newcommand{\cSolid}{\CategoryFont{cSOLD}}
\newcommand{\Solid}{\mathrm{Solid}}
\newcommand{\Weil}{\CategoryFont{Weil}_1}
\newcommand{\D}{\mathsf D}
\newcommand{\T}{\mathrm{T}}
\newcommand{\TT}{\mathbb{T}}
\newcommand{\Leung}{\mathrm{L}}
\renewcommand{\l}{\mathrm{l}}
\newcommand{\Sym}{\CategoryFont{Sym}}
\newcommand{\cMon}{\CategoryFont{cMon}}
\newcommand{\Spec}{\mathrm{Spec}}
\title{Representable tangent structures for affine schemes}
\author{\uppercase{Marcello Lanfranchi,\: Jean-Simon Pacaud Lemay}}
\affil{\normalsize\textit{Macquarie University, School of Mathematical and Physical Sciences}}
\date{}
\begin{document}

\maketitle

\begin{abstract}\noindent
The category of affine schemes is a tangent category whose tangent bundle functor is induced by K\"ahler differentials, providing a direct link between algebraic geometry and tangent category theory. Moreover, this tangent bundle functor is represented by the ring of dual numbers. How special is this tangent structure? Are there any other (non-trivial) tangent structure on the category of affine schemes? In this paper, we characterize the representable tangent structures on the category of affine schemes. To this end, we introduce a useful tool, the notion of tangentoids, which are precisely the objects in a monoidal category that induce a tangent structure via tensoring. Furthermore, coexponentiable tangentoids induce tangent structures on the opposite category. As such, we first prove that tangentoids in the category of commutative unital algebras are equivalent to commutative associative solid non-unital algebras, that is, commutative associative non-unital algebras whose multiplication is an isomorphism. From there, we explain how representable tangent structures on affine schemes correspond to finitely generated projective commutative associative solid non-unital algebras. In particular, for affine schemes over a principal ideal domain, we show that there are precisely two representable tangent structures: the trivial one and the one given by K\"ahler differentials.
\end{abstract}

\begin{acknowledgements}
The authors would like to thank Geoff Vooys, Steve Lack, and Richard Garner for useful discussions. This material is based upon work supported by the AFOSR under award number FA9550-24-1-0008, and by the ARC under DECRA award number DE230100303. 
\end{acknowledgements}


\vspace{-1cm}

\tableofcontents

\section{Introduction}
\label{section:introduction}
A tangent category, as introduced by Rosick\'y in~\cite{rosicky:tangent-cats} and later revisited and generalized by Cockett and Cruttwell in~\cite{cockett:tangent-cats}, is a categorical framework for differential geometry. Briefly, a tangent category (Definition~\ref{definition:tangent-cat}) is a category equipped with an endofunctor $\T$, where for an object $M$ we intuitively think of $\T M$ as an ``abstract tangent bundle'' over $M$, which comes equipped with various natural transformations that capture fundamental properties of the classical tangent bundle over a smooth manifold. Tangent category theory is now a well-established field of interest, with a rich literature and has found connections to other areas beyond differential geometry, such as to algebraic geometry, algebra, operad theory, computer science, etc. 

\par An important example of a tangent category, especially in relation to algebraic geometry, is the category of affine schemes (over a fixed commutative and unital ring $R$), which comes equipped with a tangent structure whose tangent bundle functor sends an affine scheme $\Spec(A)$ (where $A$ is a commutative and unital $R$-algebra) to the symmetric $A$-algebra of the module of K\"ahler differentials of $A$, i.e., $\T(\Spec(A)) = \Spec(\Sym_A\Omega_A)$. The corresponding $R$-algebra is called the ``fibré tangente''~\cite[Definition 16.5.12.I]{grothendieck:elements} or tangent algebra~\cite[Section 2.6]{jubin:tangent-monad-and-foliations} of $A$. This tangent structure on affine schemes, first described in~\cite{cockett:tangent-cats} and later extensively analyzed in~\cite{cruttwell:algebraic-geometry}, captures some key geometric constructions of algebraic geometry~\cite{cruttwell:algebraic-geometry,cruttwell:connections-algebraic-geometry}. Moreover, this tangent bundle functor is in fact representable, that is, it is the left adjoint to the functor $\Spec(R[\epsilon])\times-$, where $R[\epsilon]$ is the ring of dual numbers over the base ring $R$. More generally, representable tangent structures on categories with finite products are equivalently characterized by a special class of objects called infinitesimal objects (Definition~\ref{definition:infinitesimal-object}), which are those exponentiable objects $D$ whose corresponding representable functor $\T = (-)^D$ gives a tangent structure~\cite[Proposition~5.7]{cockett:tangent-cats}. In particular, $\Spec(R[\epsilon])$ is an infinitesimal object in the category of affine schemes over $R$. 

\par It is natural to wonder how special this tangent structure for affine schemes is. In this paper, we classify representable tangent structures on the category of affine schemes, or equivalently infinitesimal objects in the category of affine schemes. To do so, we first introduce tangentoids (Definition~\ref{definition:tangentoid}), which are objects $D$ in a monoidal category equipped with a suitable structure such that the endofunctor $\T(-) = D \otimes -$ gives a tangent structure (Proposition~\ref{proposition:tangentoid-tangent-structure1}). Coexponentiable tangentoids (Definition~\ref{definition:coexponentiable-tangentoid}) induce tangent structures on the opposite category. Therefore, coexponentiable tangentoids on a category with finite coproducts correspond precisely to infinitesimal objects in the opposite category. As such, classifying infinitesimal objects in the category of affine schemes over $R$ is equivalent to classifying coexponentiable tangentoids in the category $\cAlg_R$ of commutative and unital $R$-algebras.

\par We first show that every tangentoid $D$ in $\cAlg_R$ is isomorphic to an $R$-algebra $R\ltimes M$ (where $\ltimes$ denotes the square-zero extension) for some $R$-module $M$ equipped with a commutative and associative multiplication $\alpha\colon M\x_RM\to M$ which is also an isomorphism (Theorem~\ref{theorem:characterization-tangentoids}). Non-unital algebras whose multiplication is an isomorphism are better known as solid algebras \cite{gutierrez2015solid}. So in other words, we show that there is an equivalence between tangentoids in $\cAlg_R$ and commutative non-unital associative solid $R$-algebras. We then use a theorem of Niefield's~\cite[Theorem~4.3]{niefield:cartesianness} to construct an equivalence between coexponentiable tangentoids in $\cAlg_R$ and commutative non-unital associative solid $R$-algebras whose underlying $R$-module is finitely generated and projective (Theorem~\ref{theorem:characterization-infinitesimal-objects}), giving us our desired characterization of representable tangent structures on affine schemes.

\par As a special case, when the base ring $R$ is a principal ideal domain (PID), the only coexponentiable tangentoids in $\cAlg_R$ are the base ring $R$ and $R[\epsilon]$ (Theorem~\ref{theorem:PID-characterization}). In other words, the only representable tangent structures on the category of affine schemes over a PID $R$ are the trivial tangent structure (Example~\ref{example:trivial}) and the aforementioned tangent structure related to the module of K\"ahler differentials (Corollary~\ref{corollary:PID-infinitesimal-are-R-D}). When the base ring $R$ fails to be a PID, more representable tangent structures are allowed as we show in Example~\ref{example:non-PID-example}. Furthermore, even when the base ring is a PID, our classification does not put constraints on non-representable tangent structures. In particular, in Example~\ref{example:Q-tangentoid-non-exponentiable} we show that the ring $\Q$ of rational numbers induces a novel tangent structure on the category $\cRing$ of commutative and unital rings, which however, does not dualize to affine schemes.


\section{Background on tangent category theory}
\label{section:tangent-cats}
In this section, we recall the necessary background on tangent category theory. Tangent categories were initially introduced by Rosick\'y in~\cite{rosicky:tangent-cats} and later on, generalized by Cockett and Cruttwell in~\cite{cockett:tangent-cats}. A tangent category is a categorical framework for differential geometry. In this regard, the objects of a tangent category should be interpreted as generalized geometric spaces and morphisms as smooth maps between these spaces. To support this intepretation, a tangent category comes equipped with an endofunctor $\T$, called the tangent bundle functor, which assigns to every generalized geometric space $M$ a new space $\T M$, interpeted as the tangent bundle of $M$, that is, the geometric space of all tangent vectors of $M$. A tangent category also has a list of natural transformations which make $\T$ into a proper tangent bundle functor. For an in-depth introduction to tangent categories, we refer the reader to~\cite{cockett:tangent-cats, rosicky:tangent-cats, garner:embedding-theorem-tangent-cats}.

\par We assume that the reader is comfortable with basic concepts and notions of category theory, like functors, adjunction, (co)limits, and exponential objects. Ordinary categories are denoted by symbols such as $\X,\X'$. In a category $\X$, objects will be denoted by capital letters like $A,B,E,M,N$ and Hom sets by $\X(A,B)$, with morphisms denoted by lowercase letters $f,g\colon A\to B$. In this paper, we adopt functional composition, that is, the composition of two morphisms $f\colon A\to B$ and $g\colon B\to C$ is denoted $g\o f$, instead of diagrammatic composition as is sometimes used in other tangent category papers such as in~\cite{cockett:tangent-cats}.

\subsection{Tangent categories}
\label{subsection:tangent-cats}
We first recall the definition of additive bundles, which are commutative monoids in slice categories. The definition of an additive bundle involves pullbacks; we denote the pullback of morphisms $f\colon A\to C$ and $g\colon B\to C$ by $A\times_CB$ with projections $\pi_1\colon A\times_CB\to A$ and $\pi_2\colon A\times_CB\to B$. Moreover, given two morphisms $h_A\colon X\to A$ and $h_B\colon X\to B$ such that $f\o h_A=g\o h_B$, the unique morphism induced by the universal property of the pullback $A\times_CB$ is denoted by $\<h_A,h_B\>\colon X\to A\times_CB$. When we consider two pullbacks $A\times_CB$ and $A'\times_{C'}B'$ of two pairs of morphisms $f\colon A\to C$ and $g\colon B\to C$, and $f'\colon A'\to C'$ and $g'\colon B'\to C'$, given three morphisms $h_A\colon A\to A'$, $h_B\colon B\to B'$, and $h_C\colon C\to C'$ satisfying $f'\o h_A=h_C\o f$ and $g'\o h_B=h_C\o g$, the unique morphism $\<h_A\o\pi_1,h_B\o\pi_2\>\colon A\times_CB\to A'\times_{C'}B'$ induced by the universal property of $A'\times_{C'}B'$ is denoted by $h_A\times_{h_C}h_B\colon A\times_CB\to A'\times_{C'}B'$, or simply by $h_A\times_Ch_B$ when $h_C$ is the identity on $C$.

\begin{definition}
\label{definition:additive-bundle}
In a category $\X$ an \textbf{additive bundle}~\cite[Section 2.1]{cockett:tangent-cats} consists of a a pair of objects $E$ and $B$ of $\X$, a morphism $q\colon E\to M$, called the \textbf{projection}, for which the $n$-fold pullback
\begin{equation*}
\begin{tikzcd}
{E_n} & E \\
E & B
\arrow["{\pi_n}", from=1-1, to=1-2]
\arrow["{\pi_1}"', from=1-1, to=2-1]
\arrow["\lrcorner"{anchor=center, pos=0.125}, draw=none, from=1-1, to=2-2]
\arrow["\dots"{marking, allow upside down}, shift right=2, draw=none, from=1-2, to=2-1]
\arrow["q", from=1-2, to=2-2]
\arrow["q"', from=2-1, to=2-2]
\end{tikzcd}
\end{equation*}
of $q$ along itself exists, a morphism $z_q\colon B\to E$, called the \textbf{zero morphism}, and a morphism $s_q\colon E_2\to E$, called the \textbf{sum morphism}, satisfying the following equations:
\begin{equation*}
\begin{tikzcd}
E & B \\
B
\arrow["q", from=1-1, to=1-2]
\arrow["{z_q}", from=2-1, to=1-1]
\arrow[equals, from=2-1, to=1-2]
\end{tikzcd}
\qquad\hfill
\begin{tikzcd}
{E_2} & E \\
E & B
\arrow["{s_q}", from=1-1, to=1-2]
\arrow["{\pi_1}"', from=1-1, to=2-1]
\arrow["q", from=1-2, to=2-2]
\arrow["q"', from=2-1, to=2-2]
\end{tikzcd}
\end{equation*}
\begin{equation*}
\begin{tikzcd}
{E_2} & E \\
{E_2}
\arrow["{s_q}", from=1-1, to=1-2]
\arrow["{\langle \pi_2, \pi_1 \rangle}", from=2-1, to=1-1]
\arrow["{s_q}"', from=2-1, to=1-2]
\end{tikzcd}\qquad\hfill
\begin{tikzcd}
{E_3} & {E_2} \\
{E_2} & E
\arrow["{s_q\times_B\id_E}", from=1-1, to=1-2]
\arrow["{\id_E\times_Bs_q}"', from=1-1, to=2-1]
\arrow["{s_q}", from=1-2, to=2-2]
\arrow["{s_q}"', from=2-1, to=2-2]
\end{tikzcd}\qquad\hfill
\begin{tikzcd}
{E_2} & E \\
E
\arrow["{s_q}", from=1-1, to=1-2]
\arrow["{\<z_q\o q,\id_E\>}", from=2-1, to=1-1]
\arrow[equals, from=2-1, to=1-2]
\end{tikzcd}
\end{equation*}
As a shorthand, we denote additive bundles by triples $(q\colon M \to E, z_q, s_q)$, or simply as $(q, z_q, s_q)$, where $q\colon E \to M$ is the projection, $z_q$ is the zero morphism, and $s_q$ is the sum morphism. Given an endofunctor $\T\colon\X\to\X$, a \textbf{$\T$-additive bundle} is an additive bundle $(q, z, s)$ such that all $n$-fold pullbacks of the projection $q$ along itself are preserved by all iterates $\T^n$ of $\T$.
\end{definition}

If $(q, z, s)$ is an additive bundle, then the projection $q\colon E \to M$ is a commutative monoid in the slice category over $M$ with $s_q$ being the monoid multiplication and $z_q$ is the monoid unit. Thus, asking for pullbacks of $q$ along itself gives us products in the slice category, then the first two diagrams say $z_q$ and $s_q$ are maps in the slice category (with the first one also saying $z_q$ is a section of $q$), while the remaining three diagrams are the commutativity, associativity, and unit axioms respectively for a commutative monoid. There is also a natural notion of additive bundle morphism, which we do not review here and will instead write out the diagrams explicitly when necessary. When an additive bundle is also an Abelian group object in the slice category, we call this an Abelian group bundle, following the terminology used in tangent category theory literature (Abelian group bundles are also sometimes known as Beck modules~\cite{beck:triples-algebras-cohomology}). 

\begin{definition}
\label{definition:group-bundle}
An \textbf{Abelian group bundle}~\cite[Section 1]{rosicky:tangent-cats} is an additive bundle $(q\colon E \to M, z_q, s_q)$ equipped with an extra morphism $n_q\colon E\to E$, called \textbf{negation}, satisfying the following equation:
\begin{equation*}
\begin{tikzcd}
E & {E_2} \\
B & E
\arrow["{\<\id_E,n\>}", from=1-1, to=1-2]
\arrow["q"', from=1-1, to=2-1]
\arrow["{s_q}", from=1-2, to=2-2]
\arrow["{z_q}"', from=2-1, to=2-2]
\end{tikzcd}
\end{equation*}
As a shorthand, we denote Abelian group bundles as quadruples $(q\colon E \to M, z_q, s_q, n_q)$, or simply as $(q, z_q, s_q, n_q)$, where $n_q$ denotes the negation. For an endofunctor $\T\colon\X\to\X$, a \textbf{$\T$-Abelian group bundle} is an Abelian group bundle $(q, z_q, s_q, n_q)$ whose underlying additive bundle $(q, z_q, s_q)$ is a $T$-additive bundle. 
\end{definition}

We now turn our attention to reviewing the definition of a tangent category. The main difference between Rosick\'y's definition in~\cite{rosicky:tangent-cats} and Cockett and Cruttwell definition in~\cite{cockett:tangent-cats} is that the former assumes an Abelian group bundle structure while the latter is slightly more general since it only assumes an additive bundle structure. As such, Rosick\'y's notion of a tangent structure comes with an extra natural transformation capturing negatives and thus, following the terminology used in~\cite{cruttwell:algebraic-geometry}, we call this a Rosick\'y tangent category/structure (previously called a tangent category/structure with negatives). 

\begin{definition}
\label{definition:tangent-cat}
A \textbf{tangent structure}~\cite[Definition 2.3]{cockett:tangent-cats} on a category $\X$ is a tuple $\TT = (\T, p, z, s, l, c)$ consisting of the following:
\begin{description}
\item[Tangent bundle functor] An endofunctor $\T\colon\X\to\X$;

\item[Projection] A natural transformation $p_M\colon\T M\to M$, for which the $n$-fold pullback $\T_n$ of $p_M\colon\T M\to M$ along itself exists;

\item[Zero morphism] A natural transformation $z_M\colon M\to\T M$;

\item[Sum morphism] A natural transformation $s_M\colon\T_2M\to\T M$;

\item[Vertical lift] A natural transformation $l_M\colon\T M\to\T^2M$;

\item[Canonical flip] A natural transformation $c_M\colon\T^2M\to\T^2M$;
\end{description}
subject to the following conditions:
\begin{enumerate}
\item $(p_M,z_M,s_M)$ is a $\T$-additive bundle;

\item $(z_M,l_M)\colon(p_M,z_M,s_M)\to(\T p_M,\T z_M,\T s_M)$ is an additive bundle morphism, that is, the diagrams 
\begin{equation*}
\begin{tikzcd}
{\T M} & {\T^2M} \\
M & {\T M}
\arrow["l_M", from=1-1, to=1-2]
\arrow["p_M"', from=1-1, to=2-1]
\arrow["{\T p_M}", from=1-2, to=2-2]
\arrow["z_M"', from=2-1, to=2-2]
\end{tikzcd}\qquad\hfill
\begin{tikzcd}
{\T M} & {\T^2M} \\
M & {\T M}
\arrow["l_M", from=1-1, to=1-2]
\arrow["z_M", from=2-1, to=1-1]
\arrow["z_M"', from=2-1, to=2-2]
\arrow["{\T z_M}"', from=2-2, to=1-2]
\end{tikzcd}\qquad\hfill
\begin{tikzcd}[column sep=huge]
{\T_2M} & {\T\T_2M} \\
{\T M} & {\T^2M}
\arrow["{l_M\times_{z_M}l_M}", from=1-1, to=1-2]
\arrow["s_M"', from=1-1, to=2-1]
\arrow["{\T s_M}", from=1-2, to=2-2]
\arrow["l_M"', from=2-1, to=2-2]
\end{tikzcd}
\end{equation*}
commute;

\item $(\id_M,c_M)\colon(p_M,z_M,s_M)\to(\T p_M,\T z_M,\T s_M)$ is an additive bundle morphism, that is, the diagrams
\begin{equation*}
\begin{tikzcd}
{\T^2M} & {\T^2M} \\
{\T M} & {\T M}
\arrow["c_M", from=1-1, to=1-2]
\arrow["{p_{\T M}}"', from=1-1, to=2-1]
\arrow["{\T p_M}", from=1-2, to=2-2]
\arrow[equals, from=2-1, to=2-2]
\end{tikzcd}\qquad\hfill
\begin{tikzcd}
{\T^2M} & {\T^2M} \\
{\T M} & {\T M}
\arrow["c_M", from=1-1, to=1-2]
\arrow["{z_{\T M}}", from=2-1, to=1-1]
\arrow[equals, from=2-1, to=2-2]
\arrow["{\T z_M}"', from=2-2, to=1-2]
\end{tikzcd}\qquad\hfill
\begin{tikzcd}
{\T_2\T M} && {\T\T_2M} \\
{\T^2M} && {\T^2M}
\arrow["{c_M\times_{\T M}c_M}", from=1-1, to=1-3]
\arrow["{s_{\T M}}"', from=1-1, to=2-1]
\arrow["{\T s_M}", from=1-3, to=2-3]
\arrow["c_M"', from=2-1, to=2-3]
\end{tikzcd}
\end{equation*}
commute;

\item The following diagrams commute:
\begin{equation*}
\begin{tikzcd}
{\T M} & {\T^2M} \\
{\T^2M} & {\T^3M}
\arrow["l_M", from=1-1, to=1-2]
\arrow["l_M"', from=1-1, to=2-1]
\arrow["{\T l_M}", from=1-2, to=2-2]
\arrow["{l_{\T M}}"', from=2-1, to=2-2]
\end{tikzcd}\qquad\hfill
\begin{tikzcd}
{\T^2M} & {\T^2M} \\
& {\T^2M}
\arrow["c_M", from=1-1, to=1-2]
\arrow[equals, from=1-1, to=2-2]
\arrow["c_M", from=1-2, to=2-2]
\end{tikzcd}\qquad\hfill
\begin{tikzcd}
{\T^3M} & {\T^3M} & {\T^3M} \\
{\T^3M} & {\T^3M} & {\T^3M}
\arrow["{\T c_M}", from=1-1, to=1-2]
\arrow["{c_{\T M}}"', from=1-1, to=2-1]
\arrow["{c_{\T M}}", from=1-2, to=1-3]
\arrow["{\T c_M}", from=1-3, to=2-3]
\arrow["{\T c_M}"', from=2-1, to=2-2]
\arrow["{c_{\T M}}"', from=2-2, to=2-3]
\end{tikzcd}
\end{equation*}

\item The following diagrams commute:
\begin{equation*}
\begin{tikzcd}
{\T M} & {\T^2M} \\
& {\T^2M}
\arrow["l_M", from=1-1, to=1-2]
\arrow["l_M"', from=1-1, to=2-2]
\arrow["c_M", from=1-2, to=2-2]
\end{tikzcd}\qquad\hfill
\begin{tikzcd}
{\T^2M} & {\T^3M} & {\T^3M} \\
{\T^2M} && {\T^3M}
\arrow["{l_{\T M}}", from=1-1, to=1-2]
\arrow["c_M"', from=1-1, to=2-1]
\arrow["{\T c_M}", from=1-2, to=1-3]
\arrow["{c_{\T M}}", from=1-3, to=2-3]
\arrow["{\T l_M}"', from=2-1, to=2-3]
\end{tikzcd}
\end{equation*}

\item The vertical lift is universal in the sense that the following diagram
\begin{equation*}
\begin{tikzcd}
{\T_2M} & {\T\T_2M} & {\T^2M} \\
{\T M} \\
M && {\T M}
\arrow["{l_M\times_{z_M}z_{\T M}}", from=1-1, to=1-2]
\arrow["{\pi_1}"', from=1-1, to=2-1]
\arrow["{\T s_M}", from=1-2, to=1-3]
\arrow["{\T p_M}", from=1-3, to=3-3]
\arrow["p_M"', from=2-1, to=3-1]
\arrow["z_M"', from=3-1, to=3-3]
\end{tikzcd}
\end{equation*}
is a pullback diagram.
\end{enumerate}
A \textbf{Rosick\'y tangent structure}~\cite[Definition 3.3]{cockett:tangent-cats} is a tangent structure $\TT$ equipped with:
\begin{description}
\item[Negation] A natural transformation $n_M\colon\T M\to\T M$, such that $(p_M,z_M,s_M, n_M)$ is a $\T$-Abelian group bundle. 
\end{description}
A (\textbf{Rosick\'y}) \textbf{tangent category} consists of a category $\X$ equipped with a (Rosick\'y) tangent structure $\TT$.
\end{definition}

\par We shall denote the tangent bundle functor of a tangent structure with the same symbol adopted for the tangent structure, e.g., if the tangent structure is denoted by $\TT$, the corresponding tangent bundle functor is denoted by $\T$. If the symbol denoting the tangent structure is decorated with a subscript or with a superscript, the tangent bundle functor and each of the structural natural transformations are decorated in the same way, e.g., for a tangent structure $\TT'_\o$, the tangent bundle functor is denoted by $\T'_\o$, the projection by $p'_\o$ and so on.

\par Let us briefly provide some intuition for the definition of a (Rosický) tangent category. Tangent categories formalize the properties of the tangent bundle on smooth manifolds from classical differential geometry. An object $M$ in a tangent category can be interpreted as a generalized geometric space, and $\T M$ as its abstract tangent bundle, which represents the geometric space of the tangent vectors of $M$. The projection $p_M$ is the analogue of the natural projection from the tangent bundle to its base space which sends every tangent vector to its base point, making $\T M$ an abstract fibre bundle over $M$. The sum $s_M$ and the zero map $z_M$ make $\T M$ into a generalized version of a smooth vector bundle where each fibre is a commutative monoid. In a Rosický tangent category, each fibre is also an Abelian group. To explain the vertical lift, recall that in differential geometry, the double tangent bundle, that is, the tangent bundle of the tangent bundle, admits a canonical sub-bundle called the vertical bundle, which is isomorphic to the pullback $\T_2M$ of the projection along itself. The vertical lift $l_M$ is an analogue of the embedding of the tangent bundle into the double tangent bundle via the vertical bundle. The universal property of vertical lift is essential to generalize important properties of the tangent bundle from differential geometry, see~\cite[Section 2.5]{cockett:tangent-cats} for more details. Finally, the canonical flip $c_M$ is an analogue of the smooth involution of the same name on the double tangent bundle, which encodes the symmetry of the Hessian matrix.

\par As expected, the canonical example of a (Rosický) tangent category is the category of smooth manifolds, where the tangent bundle functor, as the name suggests, sends a smooth manifold to its tangent bundle. Here are now some other well-known examples of tangent categories that are of most interest for the story of this paper. Lists of other examples of tangent categories can be found in~\cite[Example 2.2]{cockett:differential-bundles} and~\cite[Example 2]{garner:embedding-theorem-tangent-cats}. 

\begin{example}
\label{example:trivial}
Every category $\X$ comes equipped with a Rosick\'y tangent structure, called the \textit{trivial tangent structure}, whose tangent bundle functor is the identity functor, and whose structural natural transformations are simply identities. 
\end{example}

\begin{example}
\label{example:biproducts} 
Every category $\X$ which admits finite biproducts, which we shall denote as $\+$, admits a tangent structure $\TT^\+$, whose tangent bundle functor $\T^\+$ sends an object $M\in\X$ to $M\+ M$. See~\cite[Lemma~3.1.1]{ikonicoff:operadic-algebras-tagent-cats} for full details.
\end{example}

\begin{example}
\label{example:cAlg-canonical}
Let $R$ be a commutative ring.\footnote{In this paper, rings and algebras are all assumed to be unital whether not specified otherwise.} The category $\cAlg_R$ of commutative $R$-algebras comes equipped with a Rosick\'y tangent structure $\TT^\epsilon$ whose tangent bundle functor sends an $R$-algebra $A$ to its ring of dual numbers:
\begin{align*}
&\T^\epsilon A \= A[\epsilon]
\end{align*}
where recall that $A[\epsilon] \= A[x]/(x^2)$. For full details of this example, see~\cite[Section 3]{cruttwell:algebraic-geometry}. 
\end{example}

\begin{example}
\label{example:affine-canonical}
Let $R$ be a commutative ring. The category $\Aff_R$ of affine schemes over $R$, regarded as the opposite of the category $\cAlg_R$, comes equipped with a Rosick\'y tangent structure $\TT^\Omega$ whose tangent bundle functor sends an algebra $A$ to the symmetric $A$-algebra of the $A$-module of K\"ahler differentials of $A$:
\begin{align*}
&\T^\Omega A\=\Sym_A\Omega_{A/R}
\end{align*}
 In~\cite[Definition 16.5.12.I]{grothendieck:elements}, Grothendieck calls $\T^\Omega A$ the ``fibré tangente'' (French for tangent bundle) of $A$, while in~\cite[Section 2.6]{jubin:tangent-monad-and-foliations}, Jubin calls $\T^\Omega A$ the tangent algebra of $A$. For full details of this example, see~\cite[Section 4]{cruttwell:algebraic-geometry}. 
\end{example}

\subsection{Representable tangent categories and infinitesimal objects}
\label{subsection:representable-tangent-cats}

We now briefly review representable tangent structures which, as the name suggests, is when the tangent bundle functor is representable. To do so, we must ask our tangent categories to have finite products and that the tangent bundle functor preserves them. So for a category $\X$ with finite products, we shall denote the product by $\times$ and the (chosen) terminal object by $\ast$.

\begin{definition}
\label{definition:cartesian-tangent-category}
A (\textbf{Rosick\'y}) \textbf{Cartesian tangent category}~\cite[Definition 2.8]{cockett:tangent-cats} consists of a (Rosick\'y) tangent category $(\X,\TT)$ with finite products, preserved by the tangent bundle functor, so in particular $\T(A \times B) \cong \T A \times \T B$ and $\T \ast \cong \ast$.
\end{definition} 

Recall that in a category $\X$ with finite products, an \textbf{exponentiable object} is an object $D$ for which the functor $D \times -$ admits a left adjoint, often denoted by $(-)^D$. Moreover, an endofunctor $F\colon \X \to \X$ is said to be \textbf{representable} if $F \cong (-)^D$, for some exponentiable object $D$. In this case, we say that $F$ is represented by $D$, or that $D$ represents the functor $F$.

\begin{definition}
\label{definition:representable-tangent-cats}
A \textbf{representable tangent structure}~\cite[Section~5.7]{cockett:tangent-cats} on a category $\X$ with finite products is a tangent structure $\TT$ such that for every positive integer $n$, the functor $\T_n\colon\X\to\X$ is a representable functor. A \textbf{representable tangent category} is a Cartesian tangent category whose tangent structure is representable. 
\end{definition}

By spelling out the definition, a representable tangent structure turns out to consist of a tangent structure on a category $\X$ with finite products together with a sequence $\{D_n\}_{n\in\N}$ of exponentiable objects of $\X$ such that, for each $n$, $D_n$ represents the endofunctor $\T_n$. By convention, we shall write $D\=D_1$ for the exponentiable object representing the tangent bundle functor $\T$.

\par Cockett and Cruttwell showed in~\cite[Proposition~5.7]{cockett:tangent-cats} that a representable tangent structure $\TT$ is fully characterized by a suitable structure on the object $D$ which represents the tangent bundle functor. An object with such a structure is called an infinitesimal object. We do not recall the full definition of an infinitesimal object here, as we will encounter the dual notion in the next section. For detailed introductions to infinitesimal objects and representable tangent categories, we invite the reader to see~\cite{cockett:tangent-cats,rosicky:tangent-cats,garner:embedding-theorem-tangent-cats}. 

\begin{definition}
\label{definition:infinitesimal-object}
An \textbf{infinitesimal object}~\cite[Definition~5.6]{cockett:tangent-cats} in a category $\X$ with finite products consists of:
\begin{description}
\item[Object] An object $D$ of $\X$;

\item[Coprojection] A morphism $p\colon\*\to D$, for which the $n$-fold pushout $D_n$ of $p$ along itself exists and is preserved by all iterates $D^n\times-$ of the functor $D\times-$;

\item[Cosum morphism] A morphism $s\colon D\to D_2$;

\item[Vertical colift] A morphism $l\colon D\times D\to D$;
\end{description}
such that the axioms in~\cite[Definition~5.6]{cockett:tangent-cats} hold, in particular $D^n$ and $D_n$ are exponentiable objects. A \textbf{Rosick\'y infinitesimal object} is an infinitesimal object $D$ equipped with:
\begin{description}
\item[Negation] A morphism $n\colon D\Rightarrow D$, satisfying the necessary axioms from~\cite[Section 4]{rosicky:tangent-cats}. 
\end{description}
As a shorthand, we will denote (Rosick\'y) infinitesimal objects simply by the underlying object $D$. 
\end{definition}

\begin{remark}
\label{remark:preservation-pushouts}
In the definition of a (Rosick\'y) infinitesimal object, the requirement that the $n$-fold pushout of the coprojection along itself is preserved by each functor $D^n\times-$, is implied by $D$ being exponentiable. We decided to highlight this axiom since in the next section, we consider objects which satisfy the same (of the dual of the) axioms of an infinitesimal object, but for which the (co)exponentiability axiom is dropped. 
\end{remark}

\begin{proposition}~\cite[Proposition~5.7]{cockett:tangent-cats}
\label{proposition:infinitesimal-objects-representable-tang-cats}
For a category $\X$ with finite products, if $\TT$ is a (Rosick\'y) representable tangent structure over $\X$ and $D$ represents the tangent bundle functor, then $D$ comes equipped with the structure of an (Rosick\'y) infinitesimal object of $\X$. Conversely, if $D$ is an infinitesimal object of $\X$, $D$ induces a representable tangent structure on $\X$ whose tangent bundle functor is $(-)^D$ and the structural natural transformations are obtained by exponentializing the corresponding structural morphisms of $D$.
\end{proposition}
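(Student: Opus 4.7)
The plan is to use the Yoneda lemma together with the adjunction $\T_n \dashv D_n \times -$ to transfer, in a contravariant fashion, the entire tangent structure on $\X$ to a dual structure on the representing objects $D_n$, and vice versa. Since the statement is an ``iff'' characterization given by an adjunction, both directions proceed by the same dictionary; one only needs to check that every structural natural transformation and every commuting diagram in Definition~\ref{definition:tangent-cat} can be translated axiom-by-axiom into the dual data and axioms of Definition~\ref{definition:infinitesimal-object}.

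First I would set up the dictionary. For any object $D$ of $\X$ with finite products, the functor $D \times -$ preserves any limits it might participate in, and its left adjoint $(-)^D$ (when it exists) converts colimits of the argument into colimits on the right. Hence, under the assumption that $\T_n$ is left adjoint to $D_n \times -$ for each $n$, natural transformations $\T^j \Rightarrow \T^k$ correspond bijectively, via the mate construction, to morphisms $D_k \to D_j$ going the other way (where I use the convention $D_1 = D$, and write $D^n$ for the iterated representing object of $\T^n$). In particular, the projection $p_M \colon \T M \to M$ corresponds to a coprojection $p \colon \ast \to D$, the zero $z_M \colon M \to \T M$ to a morphism $D \to \ast$ (which is forced since $\ast$ is terminal and serves the same role as the unit of the adjunction), the sum $s_M \colon \T_2 M \to \T M$ to a cosum $s \colon D \to D_2$, and similarly the vertical lift and canonical flip are mated to the vertical colift and a canonical coflip on $D \otimes D$ and $D^2$. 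The fact that $\T_n$ is the $n$-fold pullback of $\T$ along $p$ becomes, under mating, the statement that $D_n$ is the $n$-fold pushout of $\ast$ along $p$; the requirement that $\T^n$ preserve these pullbacks is what guarantees, via the adjunction, that these pushouts exist in the first place and that iterates of $D \times -$ preserve them.

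Next I would verify the axioms. Each commutative diagram appearing in Definition~\ref{definition:tangent-cat} involves natural transformations built out of $p, z, s, l, c$ (and $n$ in the Rosick\'y case) and their whiskerings with $\T$; under the Yoneda/mate correspondence, whiskering on the left by $\T$ becomes whiskering on the right by $D \times -$, so each diagram dualizes in a purely formal way to the corresponding diagram in the definition of an infinitesimal object. The additive (resp.\ abelian group) bundle axioms, the bundle-morphism axioms for $(z_M,l_M)$ and $(\id_M,c_M)$, the coherence of $l$ and $c$ (the Yang--Baxter-like hexagon for $c$, the compatibility of $c$ with $l$, and the identity $c \circ c = \id$) all translate axiom-per-axiom.

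The main obstacle will be the universal property of the vertical lift, condition~(vi) of Definition~\ref{definition:tangent-cat}. This is not merely a diagram but the assertion that a particular square is a pullback, and it must be translated into the dual assertion that the mated square is a pushout in $\X$ preserved by all iterates of $D \times -$. The key step is to observe that since $\T_2$ is itself represented (with representing object $D_2$), and since $\T$ preserves the relevant pullback (by the $\T$-additive bundle assumption on $(p_M, z_M, s_M)$), the left adjoint $\T s_M$ in the pullback square admits an unambiguous mate, and the universal pullback condition corresponds under the adjunction to a universal pushout condition on the $D$-side. In the converse direction, the exponentiability requirement built into the definition of an infinitesimal object is precisely what supplies the adjunctions needed to define the $\T_n$ and to convert the dual pushout condition back into the required universal pullback, so no additional preservation hypothesis is needed. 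Once this universal condition is handled, the rest of the proof is a purely diagrammatic transport of axioms across the adjunction, and uniqueness up to isomorphism of the representing object follows from the Yoneda lemma as usual.
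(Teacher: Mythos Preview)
The paper does not give its own proof of this proposition: it is stated as a citation of \cite[Proposition~5.7]{cockett:tangent-cats} and left unproved. So there is nothing in the paper to compare your argument against directly.

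Your approach via the mate correspondence under the adjunctions $\T_n \dashv D_n \times -$ is the standard one and is essentially how the result is proved in the cited reference. A couple of small points worth tightening. First, your notation ``$D \otimes D$'' is out of place in a Cartesian category; the representing object for $\T^2$ is $D \times D$. Second, note that Definition~\ref{definition:infinitesimal-object} as recorded in the paper does not list a canonical coflip among the structural data; this is because in the coCartesian (here, dually, Cartesian) setting the flip is forced to be the symmetry, a point the paper makes explicit later in Proposition~\ref{proposition:symmetric-tangentoid-cocartesian}. Your sketch implicitly handles this, but you should be aware that the coflip is not independent data on the infinitesimal-object side. Third, the passage from ``$\T_n$ is an $n$-fold pullback of $p$'' to ``$D_n$ is an $n$-fold pushout of $p$'' is not automatic from the mate calculus alone: one must check that the candidate pushout really represents $\T_n$, which uses that exponentiation $(-)^{(-)}$ sends colimits in the exponent to limits. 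You gesture at this but it is the one place where a genuine argument, rather than formal diagram transport, is required. With these caveats your outline is correct.
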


Here are now examples of representable tangent categories and their infinitesimal objects. 

\begin{example}
\label{example:trivial-infinitesimal}
For any category with finite products, the trivial tangent structure of Example~\ref{example:trivial} is representable, where the corresponding infinitesimal object is the terminal object $\ast$.
\end{example}

\begin{example}
\label{example:dual-numbers-infinitesimal}
The tangent structure $\TT^\Omega$ of Example~\ref{example:affine-canonical} on $\Aff_R$ is representable~\cite[Section 5.2]{cockett:tangent-cats}, where the corresponding infinitesimal object is the ring of dual numbers $R[\epsilon]$. The goal of this paper is to characterize the representable tangent structures on the category $\Aff_R$ of affine schemes. 
\end{example}

Representability of a tangent structure is a special case of a more general phenomenon called adjunctability. In particular, when each functor $\T_n$ of a tangent structure admits left adjoint $\T_n^\o$, the opposite category admits an ``adjoint'' tangent structure $\TT^\o$~\cite[Proposition 5.17]{cockett:tangent-cats}. It is possible to simplify the assumptions for adjunctability of a tangent structure when the base category has suitable pushouts~\cite[Corollary~2.2.4]{ikonicoff:operadic-algebras-tagent-cats}. Since representability plays a crucial role in the story of this paper, we want to prove a similar simplification in the representable case. For starters, recall the following technical lemma:

\begin{lemma}~\cite[Lemma~2.2.3]{ikonicoff:operadic-algebras-tagent-cats}
\label{lemma:Frankland-lemma}
Let $\T\colon\X\to\X$ be an endofunctor of a category $\X$ and $p_M\colon\T M\to M$ be a natural transformation for which the $n$-fold pullback $\T_nM$ of $p_M$ along itself exists in $\X$. Suppose also that the functor $\T$ admits a left adjoint $\T^\o$ and let $\eta_M\colon M\to\T\T^\o M$ and $\varepsilon_M\colon\T^\o\T M\to M$ denote the unit and the counit of the adjunction, respectively. Consider the mate of $p_M$ along this adjunction, that is, the following composite:
\begin{align*}
&p^\o_M\colon M\xrightarrow{\eta_M}\T\T^\o M\xrightarrow{p_{\T^\o M}}\T^\o M
\end{align*}
If the $n$-fold pushout $\T_n^\o M$ of $p^\o_M$ exists in $\X$, then the functor $\T_n^\o$ is a left adjoint of $\T_n$.
\end{lemma}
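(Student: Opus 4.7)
The plan is to construct, naturally in $M$ and $N$, a bijection
\[
\X(\T_n^\o M, N) \cong \X(M, \T_n N),
\]
by combining the defining universal properties of the pushout $\T_n^\o M$ and the pullback $\T_n N$ with the given adjunction $\T^\o\dashv\T$. Such a natural bijection is exactly what it means for $\T_n^\o$ to be left adjoint to $\T_n$, so the construction of the hom-set isomorphism suffices.

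First I would unpack the universal properties on both sides. By the universal property of the $n$-fold pushout of $p^\o_M$ along itself, a morphism $g\colon\T_n^\o M\to N$ corresponds bijectively to an $n$-tuple $(g_1\,g_n)$ of morphisms $g_i\colon\T^\o M\to N$ such that $g_i\o p^\o_M=g_j\o p^\o_M$ for all $i,j$. Dually, a morphism $h\colon M\to\T_n N$ corresponds to an $n$-tuple $(h_1\,h_n)$ of morphisms $h_i\colon M\to\T N$ with $p_N\o h_i=p_N\o h_j$ for all $i,j$. The adjunction $\T^\o\dashv\T$ supplies a componentwise bijection sending each $g_i\colon\T^\o M\to N$ to its mate $\tilde g_i\=\T(g_i)\o\eta_M\colon M\to\T N$.

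The heart of the proof, and the step I would expect to be the main (though mild) obstacle, is verifying that this componentwise bijection matches the pushout compatibility condition on the left-hand side with the pullback compatibility condition on the right-hand side. Using the naturality of $p$ applied to $g_i\colon\T^\o M\to N$, one computes
\[
p_N\o\tilde g_i=p_N\o\T(g_i)\o\eta_M=g_i\o p_{\T^\o M}\o\eta_M=g_i\o p^\o_M,
\]
and symmetrically for $g_j$, so that $g_i\o p^\o_M=g_j\o p^\o_M$ holds if and only if $p_N\o\tilde g_i=p_N\o\tilde g_j$ does. This identity is really the content of the statement that $p^\o$ is the mate of $p$ under the adjunction, and it is the only non-formal input in the argument. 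Once it is in hand, assembling the componentwise bijections yields the desired natural isomorphism $\X(\T_n^\o M,N)\cong\X(M,\T_n N)$. Naturality in $M$ and $N$ is inherited from the naturality of the adjunction bijection and of $p$, together with the universal properties of the (co)limits, and so requires only routine bookkeeping rather than any further insight.
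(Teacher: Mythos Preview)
Your argument is correct and is the standard way to establish this kind of result: unravel both universal properties into compatible $n$-tuples, use the adjunction componentwise, and check via naturality of $p$ (together with the definition of $p^\o$ as the mate) that the pushout compatibility on one side corresponds exactly to the pullback compatibility on the other. The computation $p_N\o\tilde g_i = g_i\o p^\o_M$ is indeed the only substantive step, and you have it right.

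Note, however, that the paper does not actually supply a proof of this lemma: it is stated with a citation to \cite[Lemma~2.2.3]{ikonicoff:operadic-algebras-tagent-cats} and used as a black box. So there is no in-paper argument to compare yours against; your write-up would serve perfectly well as the omitted proof.
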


The next lemma simplifies the definition of infinitesimal objects when the base category admits finite pullbacks.

\begin{lemma}
\label{lemma:Frankland-lemma-representability}
In a category $\X$ with finite pullbacks and a terminal object (so in particular, $\X$ has finite products), let $D$ be an object equipped with a morphism $p\colon\*\to D$ such that the $n$-fold pushout $D_n$ along itself exists. If $D$ is exponentiable then so is each $D_n$.
\begin{proof}
The exponentiability condition on $D$ means that the functor $\T\=D\times-\colon\X\to\X$ admits a right adjoint. Moreover, we also have a natural transformation $p_A\colon A\to\T A$ defined as $p_A\=p\times\id_M\colon A \to D \times A$. In order to apply Lemma~\ref{lemma:Frankland-lemma}, let us consider $D$ in the opposite category $\X^\op$ of $\X$. Thus, the functor $\T\colon\X^\op\to\X^\op$ admits a left adjoint $\T^\o$ and we have a natural transformation $p_A\colon\T A\to A$. Since the $n$-fold pushout of $p$ exists in $\X$, the $n$-fold pullback of $p$ exists in $\X^\op$. Moreover, since $\X$ has finite pullbacks, $\X^\op$ has finite pushouts, the mate $p^\o$ of $p$ along the adjunction $\T^\o\dashv\T$ admits the $n$-fold pushout $\T_n^\o$ along itself. Thus, by Lemma~\ref{lemma:Frankland-lemma}, the functor $\T_n=D_n\times-$ admits a left adjoint $\T_n^\o$ in $\X^\op$, making $D_n$ coexponentiable in $\X^\op$ and therefore exponentiable in $\X$.
\end{proof}
\end{lemma}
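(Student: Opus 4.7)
The plan is to dualize and apply Lemma~\ref{lemma:Frankland-lemma} in the opposite category $\X^\op$. Since $D$ is exponentiable in $\X$, the endofunctor $\T\=D\times-$ admits a right adjoint $(-)^D$ on $\X$; viewing $\T$ as an endofunctor of $\X^\op$ reverses this adjunction into a left adjoint $\T^\o$ of $\T$, which is precisely the situation Lemma~\ref{lemma:Frankland-lemma} requires. The coprojection $p\colon\*\to D$ furnishes the needed natural transformation: in $\X$ it yields $p_A\=p\times\id_A\colon A\to\T A$, and dually in $\X^\op$ it takes the form $p_A\colon\T A\to A$.

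Next I would verify the two existence hypotheses of Lemma~\ref{lemma:Frankland-lemma}. The first is the existence of the $n$-fold pullback of $p_A$ in $\X^\op$, which corresponds in $\X$ to the $n$-fold pushout of $p\times\id_A\colon A\to D\times A$ along itself; for $A=\*$ this is the object $D_n$ provided by hypothesis, and for general $A$ one identifies this pushout with $D_n\times A$. The second hypothesis requires the $n$-fold pushout in $\X^\op$ of the mate $p^\o$, which amounts to an $n$-fold pullback in $\X$ of the map $A^p\colon A^D\to A$; this exists because $\X$ has finite pullbacks.

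Applying Lemma~\ref{lemma:Frankland-lemma} then produces an adjunction $\T_n^\o\dashv\T_n$ in $\X^\op$, with $\T_n\cong D_n\times-$. Dualizing back to $\X$, this means $D_n\times-$ has a right adjoint, i.e., $D_n$ is exponentiable in $\X$, as required.

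The main technical obstacle is the identification of the $n$-fold pushout of $p\times\id_A$ in $\X$ with $D_n\times A$ for arbitrary $A$, since this amounts to showing that the functor $-\times A$ preserves the pushout diagram defining $D_n$. This is not automatic, as $-\times A$ need not be cocontinuous, but I expect it to follow by exploiting that the coprojection $p\colon\*\to D$ is a split monomorphism---retracted by the unique morphism $D\to\*$---together with the cocontinuity of $D\times-$ coming from exponentiability of $D$.
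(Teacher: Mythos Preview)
Your approach is exactly the paper's: dualize to $\X^\op$ and invoke Lemma~\ref{lemma:Frankland-lemma}. The technical obstacle you flag---that the $n$-fold pushout of $p\times\id_A$ in $\X$ should be $D_n\times A$, equivalently that $\T_n=D_n\times-$---is simply asserted without comment in the paper's proof, so your write-up is if anything more scrupulous than the original.
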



\section{Tangentoids}
\label{section:tangentoids}
Characterizing representable tangent structures is equivalent to characterizing infinitesimal objects. Moreover, thanks to~\cite[Proposition 5.17]{cockett:tangent-cats}, if $\mathbb{T}$ is a representable tangent structure with associated infinitesimal object $D$, then $D \times -$ induces a tangent structure on the opposite category, where $\times$ becomes a coproduct. Such a tangent structure can be generalized to arbitrary monoidal categories. As such, in this section, we introduce the notion of a tangentoid in a monoidal category, which consists of an object such that tensoring with said object gives a tangent structure. 

Tangentoids will be useful for the main objective of this paper since they provide an alternative characterization of infinitesimal objects. Indeed, in a setting with (co)products, an infinitesimal object is precisely a coexponentiable tangentoid in the opposite category. Therefore, characterizing representable tangent structures and infinitesimal objects is equivalent to characterizing coexponentiable tangentoids in the opposite category. So, in particular, since the category of affine schemes over $R$ is equivalent to the opposite category of $\cAlg_R$, coexponentiable tangentoids in $\cAlg_R$ are the same as infinitesimal objects in $\Aff_R$. 

\subsection{Definition of a tangentoid}
\label{subsection:tangentoids}

We assume the reader is comfortable with the basic notions of monoidal category theory, including the ones of monoidal functor, monoidal unit, and monoidal product. For monoidal categories, we shall use letters like $\E,\E'$ to denote the underlying category, while the monoidal product is denoted by $\x$ and the unit by $\I$. We will not explicitly write the associator and the unitors, and treat monoidal categories as if they were strict. Thus, monoidal categories will be denoted by triples $(\E,\x,\I)$, or simply by their underlying category $\E$ when there is no confusion.

\par As mentioned above, a tangentoid in a monoidal category is precisely an object $D$ for which the functor $D \otimes -$ gives a tangent structure. Thus, the axioms of a tangentoid are analogous to the axioms of a tangent structure. The name tangentoid will be justified in Section~\ref{subsection:tangentoids-monoids-structure}. 

\begin{definition}
\label{definition:tangentoid}
A \textbf{tangentoid} in a monoidal category $(\E,\x,\I)$ consists of:
\begin{description}
\item[Object] An object $D$ of $\E$;

\item[Projection] A morphism $p\colon D\to\I$, for which the $n$-fold pullback $D_n$ of $p$ along itself exists and is preserved by all iterates $D^n\x-$ of the functor $D\x-$;

\item[Zero morphism] A morphism $z\colon\I\to D$;

\item[Sum morphism] A morphism $s\colon D_2\to D$;

\item[Vertical lift] A morphism $l\colon D\to D\x D$;

\item[Canonical flip] A morphism $c\colon D\x D\to D\x D$;
\end{description}
subject to the following conditions:
\begin{enumerate}
\item $(p,z,s)$ is a $(D\x-)$-additive bundle;

\item $(z,l)\colon(D,p,z,s)\to(D\x D,\id_D\x p,\id_D\x z,\id_D\x s)$ is an additive bundle morphism, that is, the diagrams 
\begin{equation*}
\begin{tikzcd}
D & {D\x D} \\
\I & D
\arrow["l", from=1-1, to=1-2]
\arrow["p"', from=1-1, to=2-1]
\arrow["{\id_D\x p}", from=1-2, to=2-2]
\arrow["z"', from=2-1, to=2-2]
\end{tikzcd}\quad\hfill
\begin{tikzcd}
D & {D\x D} \\
\I & D
\arrow["l", from=1-1, to=1-2]
\arrow["z", from=2-1, to=1-1]
\arrow["z"', from=2-1, to=2-2]
\arrow["{\id_D\x z}"', from=2-2, to=1-2]
\end{tikzcd}\quad\hfill
\begin{tikzcd}[column sep=huge]
{D_2} & {D\x D_2} \\
D & {D\x D}
\arrow["{l\times_zl}", from=1-1, to=1-2]
\arrow["s"', from=1-1, to=2-1]
\arrow["{\id_D\x s}", from=1-2, to=2-2]
\arrow["l"', from=2-1, to=2-2]
\end{tikzcd}
\end{equation*}
commute;

\item $(\id_M,c)\colon(D\x D,p\x\id_D,z\x\id_D,s\x\id_D)\to(D\x D,\id_D\x p,\id_D\x z,\id_D\x s)$ is an additive bundle morphism, that is, the diagrams
\begin{equation*}
\begin{tikzcd}
{D\x D} & {D\x D} \\
D & D
\arrow["c", from=1-1, to=1-2]
\arrow["{p\x\id_D}"', from=1-1, to=2-1]
\arrow["{\id_D\x p}", from=1-2, to=2-2]
\arrow[equals, from=2-1, to=2-2]
\end{tikzcd}\quad\hfill
\begin{tikzcd}
{D\x D} & {D\x D} \\
D & D
\arrow["c", from=1-1, to=1-2]
\arrow["{z\x\id_D}", from=2-1, to=1-1]
\arrow[equals, from=2-1, to=2-2]
\arrow["{\id_D\x z}"', from=2-2, to=1-2]
\end{tikzcd}\quad\hfill
\begin{tikzcd}[column sep=large]
{D_2\x D} && {D\x D_2} \\
{D\x D} && {D\x D}
\arrow["{c\times_Dc}", from=1-1, to=1-3]
\arrow["{s\x\id_D}"', from=1-1, to=2-1]
\arrow["{\id_D\x s}", from=1-3, to=2-3]
\arrow["c"', from=2-1, to=2-3]
\end{tikzcd}
\end{equation*}
commute;

\item The following diagrams commute:
\begin{equation*}
\begin{tikzcd}
D & {D\x D} \\
{D\x D} & {D\x D\x D}
\arrow["l", from=1-1, to=1-2]
\arrow["l"', from=1-1, to=2-1]
\arrow["{\id_D\x l}", from=1-2, to=2-2]
\arrow["{l\x\id_D}"', from=2-1, to=2-2]
\end{tikzcd}\quad\hfill
\begin{tikzcd}
{D\x D} & {D\x D} \\
& {D\x D}
\arrow["c", from=1-1, to=1-2]
\arrow[equals, from=1-1, to=2-2]
\arrow["c", from=1-2, to=2-2]
\end{tikzcd}
\end{equation*}
\begin{equation*}
\begin{tikzcd}
{D\x D\x D} & {D\x D\x D} & {D\x D\x D} \\
{D\x D\x D} & {D\x D\x D} & {D\x D\x D}
\arrow["{\id_D\x c}", from=1-1, to=1-2]
\arrow["{c\x\id_D}"', from=1-1, to=2-1]
\arrow["{c\x\id_D}", from=1-2, to=1-3]
\arrow["{\id_D\x c}", from=1-3, to=2-3]
\arrow["{\id_D\x c}"', from=2-1, to=2-2]
\arrow["{c\x\id_D}"', from=2-2, to=2-3]
\end{tikzcd}
\end{equation*}

\item The following diagrams commute:
\begin{equation*}
\begin{tikzcd}
D & {D\x D} \\
& {D\x D}
\arrow["l", from=1-1, to=1-2]
\arrow["l"', from=1-1, to=2-2]
\arrow["c", from=1-2, to=2-2]
\end{tikzcd}\quad\hfill
\begin{tikzcd}
{D\x D} & {D\x D\x D} & {D\x D\x D} \\
{D\x D} && {D\x D\x D}
\arrow["{l\x\id_D}", from=1-1, to=1-2]
\arrow["c"', from=1-1, to=2-1]
\arrow["{\id_D\x c}", from=1-2, to=1-3]
\arrow["{c\x\id_D}", from=1-3, to=2-3]
\arrow["{\id_D\x l}"', from=2-1, to=2-3]
\end{tikzcd}
\end{equation*}

\item The vertical lift is universal, that is, the following diagram
\begin{equation*}
\begin{tikzcd}
{D_2} && {D\x D_2} & {D\x D} \\
D \\
\I &&& D
\arrow["{l\times_z(z\x\id_D)}", from=1-1, to=1-3]
\arrow["{\pi_1}"', from=1-1, to=2-1]
\arrow["{\id_D\x s}", from=1-3, to=1-4]
\arrow["{\id_D\x p}", from=1-4, to=3-4]
\arrow["p"', from=2-1, to=3-1]
\arrow["z"', from=3-1, to=3-4]
\end{tikzcd}
\end{equation*}
is a pullback diagram.
\end{enumerate}
A \textbf{Rosick\'y tangentoid} is a tangentoid $D$ equipped with:
\begin{description}
\item[Negation] A morphism $n\colon D\to D$ such that $(p,z,s,n )$ is a $(D\x-)$-Abelian group bundle. 
\end{description}
As a shorthand, we will denote (Rosick\'y) tangentoids simply by the underlying object $D$. 
\end{definition}

\begin{remark}
\label{remark:additivity-axiom}
In Definition~\ref{definition:tangentoid}, we require each $m$-fold pullback of the projection along itself to be preserved by each functor $D^n\x-$. Concretely, this means that the unique morphism induced by the universal property of the pullback
\begin{align*}
&D^n\x D_m\to\underbrace{(D^n\x D)\times_D{\dots}\times_D(D^n\x D)}_{\NTimes{m}}
\end{align*}
is an isomorphism. Unlike infinitesimal objects where preservation of the pushouts comes from exponentiability, since tangentoids are not assumed to be coexponentiable, we must ask for preservation of these pullbacks in our definition. 
\end{remark}

The universality of the vertical lift simplifies for Rosick\'y tangentoids.

\begin{lemma}
\label{lemma:rosicky-tangentoids}
If $D$ is a tangentoid in a monoidal category $\E$, the following diagram is a triple equalizer:
\begin{equation*}
\begin{tikzcd}
&& \I \\
D & {D\x D} && D
\arrow["z", bend left, from=1-3, to=2-4]
\arrow["l", from=2-1, to=2-2]
\arrow["{p\x p}", bend left, from=2-2, to=1-3]
\arrow["{\id_D\x p}"', bend right=50, from=2-2, to=2-4]
\arrow["{p\x\id_D}"{description}, from=2-2, to=2-4]
\end{tikzcd}
\end{equation*}
Moreover, for a Rosick\'y tangentoid in a monoidal category $\E$, assuming the other axioms hold, the universality of the vertical lift is equivalent to the above diagram being a triple equalizer.
\end{lemma}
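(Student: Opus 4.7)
I would prove the two statements of Lemma~\ref{lemma:rosicky-tangentoids} in three steps: first, that $l$ equalizes the three maps in any tangentoid; second, that axiom~6 of Definition~\ref{definition:tangentoid} implies the equalizer is universal; and third (for the moreover), that in a Rosick\'y tangentoid the triple equalizer property conversely implies axiom~6.

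For the equalization step, I would compute directly. Axiom~2 gives $(\id_D\x p)\o l = z\o p$; combining the first diagram of axiom~3 (which yields $p\x\id_D = (\id_D\x p)\o c$) with axiom~5's identity $c\o l = l$ then gives $(p\x\id_D)\o l = z\o p$; and writing $p\x p = p\o(\id_D\x p)$ up to unitors together with $p\o z = \id_\I$ gives $z\o(p\x p)\o l = z\o p$. So all three composites collapse to $z\o p$.

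For universality under axiom~6, let $f\colon X\to D\x D$ equalize the three maps and set $h\=(p\x p)\o f$. The equalizer condition $(\id_D\x p)\o f = z\o h$ is the compatibility required by axiom~6, producing a unique $g\colon X\to D_2$ with $\phi\o g = f$ and $p\o\pi_1\o g = h$, where $\phi\=(\id_D\x s)\o(l\times_\I(z\x\id_D))$. The key identity is $(p\x\id_D)\o\phi = \pi_2\colon D_2\to D$, which follows by commuting $(p\x\id_D)$ past $(\id_D\x s)$ via naturality of $p$, then using $(p\x\id_D)\o l = z\o p$ together with the section $(p\x\id_D)\o(z\x\id_D) = \id_D$, and finally applying the bundle unit $s\o\<z\o p,\id_D\>=\id_D$ (valid on $D_2$ since $p\o\pi_1 = p\o\pi_2$). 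Combined with the remaining equalizer condition, this forces $\pi_2\o g = z\o p\o\pi_1\o g$. A short direct calculation, using naturality of $z$ (giving $(z\x\id_D)\o z = (\id_D\x z)\o z$) together with the bundle unit in the $(\id_D\x p)$-bundle, gives $\phi\o\<\id_D,z\o p\> = l$; consequently $\phi\o g = l\o\pi_1\o g$, and $\tilde f\=\pi_1\o g$ is the desired unique factorization.

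For the Rosick\'y converse, suppose the triple equalizer property holds, and let $(f,h)$ satisfy $(\id_D\x p)\o f = z\o h$. I would form $f'\= f - (z\x\id_D)\o(p\x\id_D)\o f$, with the subtraction in the $(\id_D\x p,\id_D\x z,\id_D\x s,\id_D\x n)$-Abelian group bundle on $D\x D$ (using the Rosick\'y negation $\id_D\x n$). Then $(\id_D\x p)\o f' = z\o h$ (subtraction preserves fibers), and using naturality of $p$ to commute $(p\x\id_D)$ past $(\id_D\x s)$ and $(\id_D\x n)$, combined with the Rosick\'y axiom $s\o\<n,\id_D\> = z\o p$, one computes $(p\x\id_D)\o f' = z\o h$ as well. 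Thus $f'$ equalizes the three maps; the equalizer yields a unique $\tilde f$ with $l\o\tilde f = f'$, and setting $g\=\<\tilde f,(p\x\id_D)\o f\>\colon X\to D_2$ gives the pullback factor, with $\phi\o g = f$ following from $f = f' + (z\x\id_D)\o(p\x\id_D)\o f$ in the $(\id_D\x p)$-bundle. The main obstacle throughout is the systematic bookkeeping of the two bundle structures on $D\x D$ and their interaction via the canonical flip $c$, most delicately when verifying that $f'$ equalizes all three maps (not merely those involving $\id_D\x p$) in the converse direction, and in establishing $(p\x\id_D)\o\phi = \pi_2$ and $\phi\o\<\id_D,z\o p\> = l$ in the forward direction.
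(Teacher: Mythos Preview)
Your proposal is correct and follows essentially the approach the paper has in mind: the paper's own proof is the single sentence ``similar to the one of~\cite[Lemma~3.10]{cockett:tangent-cats}'', and your three-step argument is precisely the tangentoid transcription of that lemma, with the two key identities $(p\x\id_D)\o\phi=\pi_2$ and $\phi\o\<\id_D,z\o p\>=l$ playing the same role as their tangent-category analogues. The only point to tighten is the fibre check in the Rosick\'y converse: before forming $f'$ you need $(\id_D\x p)\o\big((z\o p)\x\id_D\big)\o f = z\o h$, which follows from $p\o(p\x\id_D)=p\o(\id_D\x p)$ and the hypothesis $(\id_D\x p)\o f=z\o h$; once noted, the rest of your subtraction argument goes through exactly as written.
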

\begin{proof}
The proof is similar to the one of~\cite[Lemma~3.10]{cockett:tangent-cats}.
\end{proof}

There is also a natural notion of morphisms of tangentoids, which is analogous to the notion of morphisms of tangent structures~\cite[Definition 2.27]{cockett:tangent-cats}. 

\begin{definition}
\label{definition:morphism-tangentoids}
A \textbf{morphism of tangentoids} in a monoidal category $(\E,\x,\I)$ from a (Rosick\'y) tangentoid $D$ to a (Rosick\'y) tangentoid $D'$ consists of a morphism $f\colon D\to D'$ of $\E$ which is compatible with the tangentoid structures in the sense that the following diagrams commute:
\begin{equation*}
\begin{tikzcd}
D & {D'} \\
& \I
\arrow["f", from=1-1, to=1-2]
\arrow["p"', from=1-1, to=2-2]
\arrow["{p'}", from=1-2, to=2-2]
\end{tikzcd}\quad\hfill
\begin{tikzcd}
D & {D'} \\
& \I
\arrow["f", from=1-1, to=1-2]
\arrow["z", from=2-2, to=1-1]
\arrow["{z'}"', from=2-2, to=1-2]
\end{tikzcd}\quad\hfill
\begin{tikzcd}
{D_2} & {D'_2} \\
D & {D'}
\arrow["{f\times_\I f}", from=1-1, to=1-2]
\arrow["s"', from=1-1, to=2-1]
\arrow["{s'}", from=1-2, to=2-2]
\arrow["f"', from=2-1, to=2-2]
\end{tikzcd}
\end{equation*}
\begin{equation*}
\begin{tikzcd}
D & {D'} \\
{D\x D} & {D'\x D'}
\arrow["f", from=1-1, to=1-2]
\arrow["l"', from=1-1, to=2-1]
\arrow["{l'}", from=1-2, to=2-2]
\arrow["{f\x f}"', from=2-1, to=2-2]
\end{tikzcd}\quad\hfill
\begin{tikzcd}
{D\x D} & {D'\x D'} \\
{D\x D} & {D'\x D'}
\arrow["{f\x f}", from=1-1, to=1-2]
\arrow["c"', from=1-1, to=2-1]
\arrow["{c'}", from=1-2, to=2-2]
\arrow["{f\x f}"', from=2-1, to=2-2]
\end{tikzcd}
\end{equation*}
\end{definition}

Tangentoids on a monoidal category $(\E,\x,\I)$ together with their morphisms form a category denoted by $\Tngoid(\E)$. Morphisms of Rosick\'y tangentoids are just morphisms of tangentoids since the compatibility with the negation is automatic. 

\begin{lemma}
\label{lemma:morphisms-rosicky-tangentoids}
If $f\colon D\to D'$ is a morphism of tangentoids and $D$ and $D'$ are Rosick\'y with negations $n$ and $n'$, respectively, the following diagram commutes:
\begin{equation*}
\begin{tikzcd}
D & {D'} \\
D & {D'}
\arrow["f", from=1-1, to=1-2]
\arrow["n"', from=1-1, to=2-1]
\arrow["{n'}", from=1-2, to=2-2]
\arrow["f"', from=2-1, to=2-2]
\end{tikzcd}
\end{equation*}
\end{lemma}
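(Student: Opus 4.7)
The plan is to adapt the classical fact that a monoid homomorphism between abelian groups automatically preserves inverses. The key observation is that $(p',z',s',n')$ induces an abelian group structure on every appropriate hom-set of morphisms into $D'$, and both $f \circ n$ and $n' \circ f$ will turn out to be inverses of $f$ in this group, whence they must coincide by uniqueness of inverses.

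More precisely, for any morphism $q \colon X \to \I$ in $\E$, the set $\{h \colon X \to D' \mid p' \circ h = q\}$ forms an abelian group: the addition sends $(h,h')$ to $s' \circ \langle h, h' \rangle$, the zero is $z' \circ q$, and the inverse operation is $h \mapsto n' \circ h$. This is the standard fact that an abelian group bundle induces abelian group structures on representable hom-sets over the base. I will apply it with $X = D$ and $q = p$. The projection-compatibility axiom for $f$ yields $p' \circ f = p$, so $f$ lives in this hom-set, and both $n' \circ f$ and $f \circ n$ are candidate elements; for the latter one uses $p' \circ (f \circ n) = p \circ n = p$, noting that the equality $p \circ n = p$ is implicit in the Rosick\'y tangentoid axioms, being required for the pairing $\langle n, \id_D \rangle$ to be defined.

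The substantive step is then to verify that $f \circ n$ is an additive inverse of $f$, via the chain of equalities
\begin{align*}
s' \circ \langle f \circ n, f \rangle = s' \circ (f \times_\I f) \circ \langle n, \id_D \rangle = f \circ s \circ \langle n, \id_D \rangle = f \circ z \circ p = z' \circ p,
\end{align*}
invoking in order the sum-preservation axiom for $f$, the negation axiom for $n$ in $D$, and the zero-preservation axiom for $f$. The analogous identity $s' \circ \langle n' \circ f, f \rangle = z' \circ p$ is obtained by post-composing the negation axiom for $n'$ with $f$. Uniqueness of inverses in the abelian group then forces $f \circ n = n' \circ f$, as required.

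I do not expect a genuine obstacle here: the argument is a mechanical categorical transcription of the observation that any unit- and addition-preserving morphism between abelian groups must preserve negation. The only minor bookkeeping is checking each projection equality so that the pullback pairings are well-defined, which follows immediately from the morphism axioms for $f$ together with the implicit identities $p \circ n = p$ and $p' \circ n' = p'$.
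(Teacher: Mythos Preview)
Your proposal is correct and follows essentially the same approach as the paper: both arguments reduce to the standard fact that in an abelian group inverses are unique, established via a cancellation computation. The paper unwinds this explicitly by computing the triple sum $s'\circ(\id_{D'}\times_\I s')\circ\langle n'\circ f,\langle f,f\circ n\rangle\rangle$ in two ways (using commutativity to swap the inner pair and associativity to rebracket), whereas you package the same computation conceptually as ``uniqueness of inverses in the hom-set abelian group''; the underlying equalities and the appeals to the morphism axioms for $f$ are identical.
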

\begin{proof}
Using the compability of $f$ with the additive structure of $D$ and $D'$, we have:
\begin{align*}
&s'\o\<f,f\o n\>=s'\o(f\times_\I f)\o\<\id_D,n\>=f\o s\o\<\id_D,n\>=f\o z\o p=z'\o p'\o f=z'\o p'\o n'\o f
\end{align*}
where we used that $p'\o n'=p'$. Therefore, using the cancellation property, we first compute that:
\begin{align*}
&s'\o(\id_{D'}\times_\I s')\o\<n'\o f,\<f,f\o n\>\>=s'\o\<n'\o f,s'\o\<f,f\o n\>\>\\
&\quad=s'\o\<n'\o f,z'\o p'\o n'\o f\>=s'\o\<\id_{D'},z'\o p'\>\o n'\o f=n'\o f
\end{align*}
and we can also compute that:
\begin{align*}
&s'\o(\id_{D'}\times_\I s')\o\<n'\o f,\<f\o n,f\>\>=s'\o(s'\times_\I\id_{D'})\o\<\<n'\o f,f\>,f\o n\>\\
&\quad=s'\o\<s'\o\<n',\id_{D'}\>\o f,f\o n\>=\<s'\o\<z'\o p',\id_{D'}\>,\id_{D'}\>\o f\o n=f\o n
\end{align*}
So, we conclude that $n'\o f = f\o n$ as desired. 
\end{proof}

Examples of tangentoids can be found below in Section~\ref{subsection:symmetric-tangentoids}.

\subsection{Tangent structures parametrized by tangentoids}
\label{subsection:leung-functor}
In~\cite{leung:weil-algebras}, Leung suggested a different approach to tangent category theory. Leung first defined the category $\Weil$, which is the symmetric monoidal category generated by the family of rigs $W_n\=\N[x_1\,x_n]/(x_ix_j)$ and by the following structure morphisms:
\begin{description}
\item[Projection] $p\colon W=\N[x]/(x^2)\to\N$, which sends $x$ to $0$;

\item[Zero morphism] $z\colon\N\to W$, which sends $1$ to itself;

\item[Sum morphism] $s\colon W_2\to W$, which sends $x_1$ and $x_2$ to $x$;

\item[Vertical lift] $l\colon W\to W\x W$, which sends $x$ to $x\x x$;

\item[Canonical flip] $c\colon W\x W\to W\x W$, which coincides with the symmetry.
\end{description}
Leung then showed that a tangent structure on a category $\X$ is equivalent to a suitable strong monoidal functor $\Leung[\TT]\colon\Weil\to\End(\X)$ from the symmetric monoidal category $\Weil$ to the monoidal category of endofunctors of $\X$, which preserves a suitable class of limits. Concretely, the strong monoidal functor $\Leung[\TT]$ sends $W$ to the tangent bundle functor $\T$ and each structural morphism to the corresponding structural natural transformation of $\TT$. Leung also introduced the notion of a tangent structure \textit{internal} to a monoidal category $(\E,\x,\I)$ as a strong monoidal functor $\Leung[\TT]\colon\Weil\to(\E,\x,\I)$ which preserves suitable limits~\cite[Definition~14.2]{leung:weil-algebras}. By unwrapping Leung's definition, one can show that this definition is precisely equivalent to our notion of a tangentoid. 

\begin{proposition}
\label{proposition:leung-functor}
An internal tangent structure in a monoidal category $(\E,\x,\I)$ is equivalent to a tangentoid in $(\E,\x,\I)$.
\end{proposition}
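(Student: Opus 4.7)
The plan is to verify that Leung's data unpacks precisely to the data of a tangentoid and that the axioms of each notion translate term by term. Leung's category $\Weil$ is the symmetric monoidal category presented by the generators $W_n$ together with the structural morphisms $p,z,s,l,c$, subject to relations which correspond exactly to the tangent-structure axioms. Thus, specifying a strong monoidal, suitably limit-preserving functor $\Leung[\TT]\colon\Weil\to(\E,\x,\I)$ amounts to specifying its images on these generators and verifying that it respects the relations.

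For the forward direction, I would start from such a functor $\Leung[\TT]$ and set $D\=\Leung[\TT](W)$. Strong monoidality yields $\Leung[\TT](W^{\x n})\cong D^{\x n}$, while the limit-preservation hypothesis in~\cite[Definition~14.2]{leung:weil-algebras} guarantees that $\Leung[\TT](W_n)\cong D_n$, where $D_n$ is the $n$-fold pullback of $p\=\Leung[\TT](p)\colon D\to\I$ along itself. Apply $\Leung[\TT]$ to each structural morphism of $\Weil$ to obtain the projection, zero morphism, sum morphism, vertical lift, and canonical flip required for a tangentoid structure on $D$. Each axiom of Definition~\ref{definition:tangentoid} is then the image under $\Leung[\TT]$ of the corresponding relation in $\Weil$, and the preservation axiom noted in Remark~\ref{remark:additivity-axiom} is exactly the statement that $\Leung[\TT]$ preserves the relevant pullbacks after tensoring with $W^{\x n}$.

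Conversely, given a tangentoid $D$ in $\E$, I would construct $\Leung[\TT]$ on objects by sending $W\mapsto D$ and $W_n\mapsto D_n$, and extending on general tensor products using the monoidal structure of $\E$; on generating morphisms, I would send $p,z,s,l,c$ to the corresponding tangentoid data of $D$. The coherent isomorphisms required for strong monoidality, namely $\Leung[\TT](W_n\x X)\cong D_n\x\Leung[\TT](X)$, are supplied precisely by the preservation clause in the definition of a tangentoid. Functoriality then reduces to checking that each relation in $\Weil$ is sent to an identity in $\E$, and this is guaranteed because the tangentoid axioms are literally those relations.

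The main obstacle is the explicit match-up between Leung's generators and relations for $\Weil$ and the axioms of Definition~\ref{definition:tangentoid}: one must confirm that every tangentoid axiom is among Leung's defining relations, and conversely that $\Weil$ imposes no extra relations that would force additional structure on $D$. Since Leung designed $\Weil$ precisely as the free symmetric monoidal category carrying a universal tangent structure, this verification is routine once the two conventions for the structural morphisms are aligned; the only genuinely delicate point is ensuring that the preservation-of-pullbacks clause on each side corresponds to the same condition, which is why we included it explicitly in Definition~\ref{definition:tangentoid} rather than deriving it from exponentiability.
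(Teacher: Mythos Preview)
Your proposal is correct and follows essentially the same approach as the paper: unpack the strong monoidal limit-preserving functor $\Leung[\TT]\colon\Weil\to\E$ to obtain the tangentoid data on $D=\Leung[\TT](W)$, and conversely reconstruct the functor from the tangentoid by assigning images to the generators of $\Weil$. Your version is in fact more explicit than the paper's about the role of strong monoidality in identifying $\Leung[\TT](W^{\x n})\cong D^{\x n}$ and about the pullback-preservation clause, but the underlying argument is the same.
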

\begin{proof}
Each internal tangent structure in $(\E,\x,\I)$ consists of a strong monoidal functor $\Leung[D]\colon\Weil\to(\E,\x,\I)$ that preserves some limit diagrams. In particular, $\Leung[D]$ sends $W$ to an object $D$ of $\E$, the projection $p\colon W\to\N$ to a morphism $p\colon D\to\I$, the zero morphism $z\colon\N\to W$ to a morphism $z\colon\I\to D$, the sum morphism $s\colon W_2\to W$, to a morphism $s\colon D_2\to D$, where $D_2$ must be the pullback of $p$ along itself, the vertical lift $l\colon W\to W\x W$ to a morphism $l\colon D\to D\x D$, and the canonical flip $c\colon W\x W\to W\x W$ to a morphism $c\colon D\x D\to D\x D$. However, since $W\x-$ induces a tangent structure on $\Weil$, $(D,p,z,s,l,c)$ satisfies the axioms of a tangentoid in $\E$. Conversely, each strong monoidal functor $\Leung[D]\colon\Weil\to(\E,\x,\I)$ that preserves the required limits is fully determined by the assignment of $W$ and the structural morphisms of $\Weil$.
\end{proof}

From the equivalence between internal tangent structures and tangentoids, one can show that a tangentoid on $\E$ is equivalent to giving a tangent structure on $\E$ whose tangent bundle functor is the functor $D\x-$ for some object $D$. 

\begin{proposition}
\label{proposition:tangentoid-tangent-structure1}
If $D$ is a (Rosick\'y) tangentoid in a monoidal category $(\E,\x,\I)$, the functor $D\x-$ together with the natural transformations $p\x-$, $z\x-$, $s\x-$, $l\x-$, and $c\x-$ (and $n\x-$) defines a (Rosick\'y) tangent structure on $\E$, denoted by $[D\x-]$.
\end{proposition}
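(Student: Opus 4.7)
The cleanest route is via the Leung correspondence of the preceding Proposition~\ref{proposition:leung-functor}: a tangentoid $D$ in $\E$ is the same data as a strong monoidal functor $\Leung[D]\colon\Weil\to\E$ preserving the relevant limit diagrams. There is a canonical strong monoidal ``tensoring'' functor $\Phi\colon\E\to\End(\E)$ sending an object $X$ to the endofunctor $X\x-$, with coherence isomorphisms $(X\x-)\o(Y\x-)\cong(X\x Y)\x-$ and $\id_\E\cong\I\x-$ supplied by the associator and left unitor of $(\E,\x,\I)$. The composite $\Phi\o\Leung[D]\colon\Weil\to\End(\E)$ is a strong monoidal functor which, by Leung's characterization of tangent structures, corresponds to a tangent structure on $\E$ whose tangent bundle functor is $D\x-$ and whose structural natural transformations are precisely the whiskerings $p\x-$, $z\x-$, $s\x-$, $l\x-$, and $c\x-$. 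This is the route I would take first, as it avoids a diagram-by-diagram verification.

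A more hands-on alternative, which I would use as backup to check the identification of the structural transformations, is to verify each axiom of Definition~\ref{definition:tangent-cat} directly. For each object $M$ of $\E$ and each $n$, one declares $\T_nM\=D_n\x M$ with projections $\pi_i\x\id_M$, which serves as a pullback of $p_M\=p\x\id_M$ along itself. Each of the six tangent structure axioms is then obtained by whiskering on the right by $\id_M$ the corresponding tangentoid axiom of Definition~\ref{definition:tangentoid}, and naturality in $M$ is automatic from bifunctoriality of $\x$ (while naturality in the other direction is inherited from the fixed tangentoid morphisms).

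The main technical step in either route is the universality of the vertical lift, namely axiom~(vi), which simultaneously requires that the pullbacks $\T_nM$ exist in $\E$ and that they are preserved by iterates of $\T=D\x-$. Both points are built into the preservation axiom of Definition~\ref{definition:tangentoid}: the $n$-fold pullbacks $D_n$ of $p$ exist and are preserved by all iterates $D^k\x-$, and whiskering with $\id_M$ propagates the needed preservation to the pullbacks $\T_nM$ required by the tangent structure. Finally, for the Rosick\'y variant, whiskering the negation axiom of the tangentoid with $\id_M$ immediately yields that $n_M\=n\x\id_M$ equips $(p_M,z_M,s_M)$ with the structure of a $(D\x-)$-Abelian group bundle, completing the proof.
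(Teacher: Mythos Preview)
Your primary route via the Leung correspondence is exactly the paper's proof: the paper also postcomposes the strong monoidal functor $\Leung[D]\colon\Weil\to\E$ of Proposition~\ref{proposition:leung-functor} with the tensoring functor $A\mapsto A\x-\colon\E\to\End(\E)$ to obtain a strong monoidal functor $\Weil\to\End(\E)$, which by Leung's characterization yields the desired tangent structure. Your backup hands-on verification and the explicit remarks on the Rosick\'y case and on limit preservation go beyond what the paper spells out, but are entirely consistent with its argument.
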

\begin{proof}
From Leung's approach, to give a tangent structure on $\E$ is equivalent to give a strong monoidal functor $\Leung[\TT]\colon\Weil\to\End(\E)$ that preserves a suitable class of limits. By Proposition~\ref{proposition:leung-functor}, a tangentoid is equivalent to a strong monoidal functor $\Leung[D]\colon\Weil\to\E$ that preserves a suitable class of limits. By postcomposing $\Leung[D]$ by the functor which sends an object $A$ of $\E$ to an endofunctor $A\x-\colon\E\to\E$, one obtains a strong monoidal functor $\Weil\to\E\to\End(\E)$, which defines a tangent structure on $\E$.
\end{proof}

The converse of Proposition~\ref{proposition:tangentoid-tangent-structure1} holds as well by evaluating the tangent bundle functor at the monoidal unit. 

\begin{proposition}
\label{proposition:tangentoid-tangent-structure2}
If $\TT$ is a (Rosick\'y) tangent structure on $\E$ whose tangent bundle functor $\T$ is of type $D\x-$ for some object $D$, then $D$ comes equipped with the structure of a (Rosick\'y) tangentoid, whose structural natural transformations correspond to $p_\I$, $z_\I$, $s_\I$, $l_\I$, and $c_\I$ (and $n_\I$).
\end{proposition}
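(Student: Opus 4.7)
The plan is to prove this as the reverse of Proposition~\ref{proposition:tangentoid-tangent-structure1}, working through Leung's framework. By Proposition~\ref{proposition:leung-functor} (together with the passage used in the proof of Proposition~\ref{proposition:tangentoid-tangent-structure1}), the given tangent structure $\TT$ on $\E$ corresponds to a strong monoidal, limit-preserving functor $\Leung[\TT]\colon\Weil\to\End(\E)$ with $\Leung[\TT](W) = \T = D\x-$. I would define a candidate $\Leung[D]\colon\Weil\to\E$ as the composite of $\Leung[\TT]$ with evaluation at the monoidal unit, i.e. $\Leung[D](A) := \Leung[\TT](A)(\I)$. By construction $\Leung[D](W) = \T\I = D$, and $\Leung[D]$ sends the structural morphisms $p, z, s, l, c$ of $W$ in $\Weil$ precisely to the components $p_\I, z_\I, s_\I, l_\I, c_\I$ of the corresponding natural transformations of $\TT$ at $\I$, which matches the identification claimed in the proposition.

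The heart of the proof is to verify that $\Leung[D]$ is itself strong monoidal and preserves the limits that Leung's definition of an internal tangent structure in $\E$ requires; granted this, Proposition~\ref{proposition:leung-functor} then delivers the sought-after tangentoid structure on $D$. I would prove by induction on the generation of objects in $\Weil$ (which are built from $W$ via tensor products and iterated pullbacks of the projection $p$) the key identification $\Leung[\TT](A) = \Leung[D](A)\x-$, naturally in the input. The base case is the hypothesis $\Leung[\TT](W) = D\x-$. The tensor step uses strong monoidality of $\Leung[\TT]$: assuming $\Leung[\TT](A) = \Leung[D](A)\x-$ and similarly for $B$, one gets $\Leung[\TT](A\otimes B) = \Leung[\TT](A)\o\Leung[\TT](B) = (\Leung[D](A)\x-)\o(\Leung[D](B)\x-) = (\Leung[D](A)\x\Leung[D](B))\x-$, which also identifies $\Leung[D](A\otimes B) = \Leung[D](A)\x\Leung[D](B)$. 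The pullback step requires showing $\Leung[\TT](W_n) = D_n\x-$, where $D_n := \Leung[D](W_n)$ is the $n$-fold pullback of $p\colon D\to\I$ in $\E$; this is exactly the content of the $\T$-additive bundle axiom, namely that each $\T^n = D^n\x-$ preserves the $n$-fold pullback $\T_n\I = D_n$ of $p_\I = p$ along itself.

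Once $\Leung[\TT](A) = \Leung[D](A)\x-$ is established, the strong monoidality of $\Leung[D]$ and its preservation of the required pullbacks both follow by evaluating at $\I$, so Proposition~\ref{proposition:leung-functor} provides the tangentoid on $D$ whose structural morphisms are $p_\I, z_\I, s_\I, l_\I, c_\I$ as asserted. The Rosick\'y variant is handled identically, additionally transporting the negation $n := n_\I$ and verifying its compatibility via the same evaluation-at-$\I$ argument. The main obstacle is the pullback step of the induction: one must carefully reconcile the pointwise pullback structure of $\T_n$ in $\End(\E)$ with the tensor form $D_n\x-$ at $\I$, and this is precisely where the preservation axiom built into the definition of a tangent structure does the essential work.
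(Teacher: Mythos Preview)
Your approach is essentially the same as the paper's: both pass through Leung's framework and compose $\Leung[\TT]\colon\Weil\to\End(\E)$ with evaluation at the monoidal unit $\I$. The paper is terser, simply asserting that $\mathrm{ev}_\I$ ``lifts to a strong monoidal functor $\End(\E)\to\E$'' and then composing. You are more explicit, correctly recognising that strong monoidality of $\mathrm{ev}_\I$ does not hold on all of $\End(\E)$ and that what is really needed is the identification $\Leung[\TT](A)\cong\Leung[D](A)\x-$ for every object $A$ of $\Weil$; you then propose to establish this by induction on the generators of $\Weil$.

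One caution about the pullback step of your induction. The $\T$-additive bundle axiom says that each $\T^k=D^k\x-$ \emph{preserves} the pullback $\T_n\I=D_n$, i.e.\ that $D^k\x D_n$ is the $n$-fold pullback of $\id_{D^k}\x p_\I$ along itself. That is precisely the preservation clause in the tangentoid definition, but it does not by itself yield $\T_nM\cong D_n\x M$ for arbitrary $M$, nor does the bare hypothesis $\T=D\x-$ force the components of $p$ to satisfy $p_M=p_\I\x\id_M$ in a general monoidal category. The paper's proof has the same gap implicitly (the asserted strong monoidality of $\mathrm{ev}_\I$ amounts to the same missing identification), so your argument is no worse than the paper's; but the inductive claim $\Leung[\TT](A)=\Leung[D](A)\x-$ is stronger than what the tangentoid axioms at $\I$ actually require, and the justification you give for the pullback case does not quite close it.
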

\begin{proof}
Consider a tangent structure $\TT$ on $\E$ whose tangent bundle functor $\T$ is the functor $D\x-$ for some object $D$. Via the Leung correspondence, this is equivalent to a suitable strong monoidal functor $\Leung[\TT]\colon\Weil\to\End(\E)$. The evaluation functor $\mathrm{ev}_\I$, which sends an endofunctor $F\colon E\to E$ to $F\I$ lifts to a strong monoidal functor $\End(\E)\to\E$. By postcomposing by $\Leung[\TT]$, we define an internal tangent structure $\Weil\to\End(\E)\to\E$, which, by Proposition~\ref{proposition:leung-functor}, is equivalent to a tangentoid on $\E$.
\end{proof}

Propositions~\ref{proposition:tangentoid-tangent-structure1} and~\ref{proposition:tangentoid-tangent-structure2} show an equivalence between tangentoids and tangent structures whose tangent bundle functor is of type $\T=D\x-$, for some object $D$.

\begin{theorem}
\label{theorem:tangentoid-tangent-structure}
Tangentoids on a monoidal category $\E$ are equivalent to those tangent structures on $\E$ whose tangent bundle functor $\T$ is a functor of type $D\x-$, for some object $D$.
\end{theorem}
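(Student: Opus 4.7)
The plan is to combine the two preceding propositions, which already provide constructions in each direction, and verify that they are mutually inverse. Proposition~\ref{proposition:tangentoid-tangent-structure1} produces, from a tangentoid $D$, a tangent structure $[D\x-]$ whose tangent bundle functor is $D\x-$ and whose structural natural transformations are $p\x-$, $z\x-$, $s\x-$, $l\x-$, $c\x-$ (and $n\x-$ in the Rosick\'y case). Conversely, Proposition~\ref{proposition:tangentoid-tangent-structure2} produces, from a tangent structure $\TT$ with tangent bundle functor of the form $D\x-$, a tangentoid on $D$ whose structural morphisms are $p_\I$, $z_\I$, $s_\I$, $l_\I$, $c_\I$ (and $n_\I$), making use of the identification $D\x\I\cong D$.

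The first round-trip is essentially a unitor computation: starting from a tangentoid $(D,p,z,s,l,c)$, forming $[D\x-]$, and then evaluating at $\I$, one recovers $p\x\id_\I\cong p$, and similarly for the remaining structural morphisms, via the right unitor of $\E$. The second round-trip is the crucial step. Given a tangent structure $\TT$ with $\T=D\x-$, extracting the tangentoid via Proposition~\ref{proposition:tangentoid-tangent-structure2} and then forming the associated tangent structure via Proposition~\ref{proposition:tangentoid-tangent-structure1} produces natural transformations of the form $p_\I\x\id_A$, $z_\I\x\id_A$, etc. To conclude, one must check that these coincide with the original natural transformations $p_A$, $z_A$, etc., of $\TT$.

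The main obstacle is this last verification, which amounts to showing that a natural transformation $D\x-\Rightarrow\id$ (or between iterates of $D\x-$) is determined by its component at the monoidal unit $\I$. This is precisely the content of the Leung correspondence used in the proofs of Propositions~\ref{proposition:tangentoid-tangent-structure1} and~\ref{proposition:tangentoid-tangent-structure2}. Concretely, both $\TT$ and the reconstructed tangent structure correspond, via Proposition~\ref{proposition:leung-functor}, to strong monoidal functors $\Weil\to\End(\E)$ that factor through the embedding $A\mapsto A\x-$ of $\E$ into $\End(\E)$. Postcomposing with evaluation at $\I$ recovers the associated internal tangent structure $\Weil\to\E$, and since this evaluation is a left inverse to the embedding when restricted to endofunctors of the form $A\x-$, the two factorizations must agree. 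Hence the two constructions are mutually inverse, establishing the desired equivalence.
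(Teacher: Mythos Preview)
Your approach matches the paper's: the theorem is presented there simply as the conjunction of Propositions~\ref{proposition:tangentoid-tangent-structure1} and~\ref{proposition:tangentoid-tangent-structure2}, with no separate proof and no explicit round-trip verification. You go beyond the paper by attempting to check that the two constructions are mutually inverse, and you correctly identify the non-trivial direction.

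Your verification of the second round-trip has a gap, however. You claim that the Leung functor of the original $\TT$ factors through the embedding $i\colon\E\to\End(\E)$, $A\mapsto A\x-$. On objects this is automatic (since $\T=D\x-$ and strong monoidality forces $\T_n\cong D_n\x-$), but on morphisms it amounts to asserting that each structural natural transformation of $\TT$ is already of the form $f\x-$ for some $f$ in $\E$, i.e., that $i$ is full on the relevant hom-sets. Your observation that $\mathrm{ev}_\I$ is a left inverse to $i$ only yields faithfulness of $i$; from $\mathrm{ev}_\I\circ i\cong\id_\E$ one cannot deduce that $i\circ\mathrm{ev}_\I$ restricts to the identity on the essential image of $i$. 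In a general monoidal category there may well exist natural transformations $A\x-\Rightarrow B\x-$ that are not of the form $f\x-$, so this step genuinely needs further argument (or the word ``equivalent'' in the theorem must be read loosely as a two-way correspondence rather than a bijection, which is how the paper appears to treat it).
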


From Theorem~\ref{theorem:tangentoid-tangent-structure}, one can immediately see the connection between tangent categories and tangentoids, as illustrated in the following example.

\begin{example}
\label{example:tangentoid-tangent-cats}
In Leung's approach, a tangent structure on a category $\X$ is equivalent to a strong monoidal functor $\Leung[\TT]\colon\Weil\to\End(\X)$ preserving suitable limits. Thus, by Theorem~\ref{theorem:tangentoid-tangent-structure}, a tangent structure on $\X$ is precisely a tangentoid in the monoidal category $\End(\X)$ of endofunctors of $\X$ (where the monoidal product is given by composition and the monoidal unit is the identity functor). Thus, a (Rosick\'y) tangent category is equivalently a category $\X$ with a (Rosick\'y) tangentoid in $\End(\X)$. 
\end{example}

\subsection{Symmetric tangentoids}
\label{subsection:symmetric-tangentoids}

We now consider tangentoids in \emph{symmetric} monoidal categories, specifically those whose canonical flip coincides with the symmetry. For a symmetric monoidal category, the symmetry will be denoted by $\sigma$ and symmetric monoidal categories will be denoted by quadruples $(\E,\x,\I,\sigma)$. 

\begin{definition}
\label{definition:symmetric-tangentoid}
In a symmetric monoidal category $(\E,\x,\I,\sigma)$, a \textbf{symmetric tangentoid} is a tangentoid whose canonical flip $c\colon D\x D\to D\x D$ coincides with the symmetry $\sigma_{D,D}\colon D\x D\to D\x D$.
\end{definition}

The full subcategory of symmetric tangentoids of a symmetric monoidal category $(\E,\x,\I,\sigma)$ is denoted by $\sTngoid(\E)$. The axioms of symmetric tangentoids involving the canonical flip, but the cocommutativity of the vertical lift, are in fact a consequence of the axioms of the symmetry. Thus, the definition of a symmetric tangentoid can be greatly simplified. 

\begin{proposition}
\label{proposition:simplify-symmetric-tangentoids}
In the context of a symmetric monoidal category $(\E,\x,\I,\sigma)$, the axioms of a symmetric (Rosick\'y) tangentoid involving the canonical flip, excluding the axiom $c\o l=l$, holds for free.
\end{proposition}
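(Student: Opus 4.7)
The plan is to verify each axiom of Definition~\ref{definition:tangentoid} mentioning the canonical flip under the substitution $c = \sigma_{D,D}$. I would organise the work axiom-by-axiom through axioms~(iii), (iv), and (v), since axioms~(i), (ii), (vi) and the negation axiom of the Rosick\'y case do not involve $c$.

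For axiom~(iii), each of the three diagrams expresses compatibility of $c$ with one of the additive-bundle structure maps. With $c = \sigma_{D,D}$, each becomes a naturality square for $\sigma$ applied respectively to $p$, $z$, and $s$. The first two additionally use the coherence identity that the symmetry involving the monoidal unit coincides with the appropriate unitor; the third also uses the preservation-of-pullbacks condition of Remark~\ref{remark:additivity-axiom} to identify the pairing morphism with $\sigma_{D_2, D}$, after which the diagram reduces to naturality of $\sigma$ against $s \colon D_2 \to D$.

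For axiom~(iv), the involutivity $c \o c = \id$ is immediate from $\sigma \o \sigma = \id$, while the hexagonal Yang-Baxter diagram is precisely the braid relation for $\sigma$, a standard consequence of the hexagon coherence. The second diagram of axiom~(v) would follow from the hexagon identity $(\sigma_{D,D} \x \id_D) \o (\id_D \x \sigma_{D,D}) = \sigma_{D \x D, D}$ together with naturality of $\sigma$ with respect to $l \colon D \to D \x D$.

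The hard part will be the first diagram of axiom~(v), namely $\sigma_{D,D} \o l = l$. Unlike the other cases, this does not reduce to naturality or coherence on its own. My plan here is to invoke the universality of the vertical lift in axiom~(vi), together with the already-established compatibilities of $\sigma_{D,D}$ with the projection and zero maps, in order to show that $\sigma_{D,D} \o l$ and $l$ induce the same morphism into the universal pullback that characterises $l$; uniqueness of the induced map then forces $\sigma_{D,D} \o l = l$.
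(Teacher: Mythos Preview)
For axioms~(iii), involutivity and the Yang--Baxter part of~(iv), and the second diagram of~(v), your plan coincides with the paper's proof: everything reduces to naturality of $\sigma$ together with the hexagon coherence, and the paper carries this out just as you describe.

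Where you diverge is in attempting to prove the first diagram of~(v), namely $\sigma_{D,D}\o l = l$. The paper's proof \emph{does not address this axiom}: it verifies only the compatibilities of $c=\sigma$ with $p$, $z$, $s$, involutivity, the hexagon, and the pentagon, and stops there. The intended reading is that $\sigma\o l = l$ is not among the ``automatic'' axioms but remains a genuine cocommutativity condition on $l$; indeed the paper later relies on it substantively (Lemma~\ref{lemma:lift}) to deduce that the induced comultiplication $\nu$ is cocommutative, which is what ultimately yields \emph{commutative} solid algebras in Theorem~\ref{theorem:characterization-tangentoids-Mod-R}.

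Your proposed strategy for this axiom via~(vi) has a concrete obstruction. To feed $\sigma\o l$ into the pullback of~(vi) you need $(\id_D\x p)\o(\sigma\o l)$ to factor through $z$, i.e.\ $(p\x\id_D)\o l = z\o p$. But the only axiom relating $l$ and $p$ that does not involve $c$ is the first diagram of~(ii), which gives $(\id_D\x p)\o l = z\o p$; the transposed identity $(p\x\id_D)\o l = z\o p$ is exactly what one derives \emph{from} $c\o l = l$ together with~(ii) and~(iii), so invoking it here is circular. The same circularity appears if you try to pass through the triple-equalizer reformulation of Lemma~\ref{lemma:rosicky-tangentoids}: establishing that $l$ equalises $p\x\id_D$ already presupposes the axiom you want. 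So you should follow the paper and treat $\sigma\o l = l$ as a retained hypothesis on $l$ rather than as something to be derived.
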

\begin{proof}
Let us consider a symmetric tangentoid in a symmetric monoidal category whose symmetry is denoted by $\sigma$. As we discussed in our convention, we do not explicitly write the associator and the unitors of the monoidal category. Let us start by considering the compatibilities between the projection and the zero morphism, which correspond to the commutativity of the following two diagrams:
\begin{equation*}
\begin{tikzcd}
{D\x D} && {D\x D} \\
& D
\arrow["\sigma", from=1-1, to=1-3]
\arrow["{\id_D\x p}"', from=1-1, to=2-2]
\arrow["{p\x\id_D}", from=1-3, to=2-2]
\end{tikzcd}\quad\hfill
\begin{tikzcd}
{D\x D} && {D\x D} \\
& D
\arrow["\sigma", from=1-1, to=1-3]
\arrow["{\id_D\x z}", from=2-2, to=1-1]
\arrow["{z\x\id_D}"', from=2-2, to=1-3]
\end{tikzcd}
\end{equation*}
Both diagrams commute by the naturality of $\sigma$ and by the compatibility between the symmetry and the left and right unitors.
\par Let us prove the additivity. Consider the following diagrams
\begin{equation*}
\begin{tikzcd}
{D\x D_2} &&& {D_2\x D} \\
& {(D\x D)\times_D(D\x D)} & {(D\x D)\times_D(D\x D)} \\
{D\x D} &&& {D\x D}
\arrow["\sigma", from=1-1, to=1-4]
\arrow["\cong"', from=1-1, to=2-2]
\arrow["{\id_D\x\pi_k}"', from=1-1, to=3-1]
\arrow["\cong", from=1-4, to=2-3]
\arrow["{\pi_k\x\id_D}", from=1-4, to=3-4]
\arrow["{\sigma\times_D\sigma}", from=2-2, to=2-3]
\arrow["{\pi_k}"', from=2-2, to=3-1]
\arrow["{\pi_k}", from=2-3, to=3-4]
\arrow["\sigma"', from=3-1, to=3-4]
\end{tikzcd}
\end{equation*}
parametrized by $k=1,2$. The outer square diagram commutes by the naturality of $\sigma$, the two triangles commute by the definition of the isomorphism maps, and the bottom trapeze commutes by the definition of $\sigma\times_D\sigma$. Employing the universal property of the pullback, we conclude that the top trapeze commutes as well. Therefore, $c\=\sigma$ is additive. 

Involutivity of $c$ is equivalent to the symmetry of $\sigma$. To prove the pentagon and the hexagon axioms, let us start by observing that the hexagon axiom of the symmetry makes the following diagram commute:
\begin{equation}
\label{equation:diagram:hexagon-symmetry}
\begin{tikzcd}
{D\x D\x D} && {D\x D\x D} \\
& {D\x D\x D}
\arrow["{\sigma_{D\x D,D}}", from=1-1, to=1-3]
\arrow["{\id_D\x\sigma}"', from=1-1, to=2-2]
\arrow["{\id_\D\x\sigma}"', from=2-2, to=1-3]
\end{tikzcd}
\end{equation}
Therefore
\begin{equation*}
\begin{tikzcd}
{D\x D} && {D\x D} \\
{D\x D\x D} && {D\x(D\x D)} \\
& {D\x(D\x D)}
\arrow["\sigma", from=1-1, to=1-3]
\arrow["{l\x\id_D}"', from=1-1, to=2-1]
\arrow["{\id_D\x l}", from=1-3, to=2-3]
\arrow["{\sigma_{D\x D,D}}", from=2-1, to=2-3]
\arrow["{\id_\D\x\sigma}"', from=2-1, to=3-2]
\arrow["{\sigma\x\id_D}"', from=3-2, to=2-3]
\end{tikzcd}
\end{equation*}
commutes, since the top square commutes by the naturality of $\sigma$ and the bottom triangle is precisely Diagram~\eqref{equation:diagram:hexagon-symmetry}. Finally, the hexagon axiom reads as follows:
\begin{equation*}
\begin{tikzcd}
{D\x D\x D} & {D\x D\x D} & {D\x D\x D} \\
\\
{D\x D\x D} & {D\x D\x D} & {D\x D\x D}
\arrow["{\sigma\x\id_D}", from=1-1, to=1-2]
\arrow["{\id_D\x\sigma}"', from=1-1, to=3-1]
\arrow["{\sigma_{D\x D,D}}"{description}, from=1-1, to=3-2]
\arrow["{\id_D\x\sigma}", from=1-2, to=1-3]
\arrow["{\sigma_{D\x D,D}}"{description}, from=1-2, to=3-3]
\arrow["{\sigma\x\id_D}", from=1-3, to=3-3]
\arrow["{\id_D\x\sigma}"', from=3-1, to=3-2]
\arrow["{\sigma\x\id_D}"', from=3-2, to=3-3]
\end{tikzcd}
\end{equation*}
The left and the right triangles are precisely Diagram~\eqref{equation:diagram:hexagon-symmetry}, while the central parallelogram is the naturality of $\sigma_{D\x D,D}$.
\end{proof}

Here are now examples of symmetric tangentoids, which in particular recapture tangent structures we have already encountered. 

\begin{example}
\label{example:unit} For any monoidal category $(\E,\x,\I)$, the monoidal unit $I$ is a Rosick\'y tangentoid whose tangent structure is trivial. Since for every object we have $I \otimes A = A$, it follows that the induced tangent structure from $I$ is precisely the trivial tangent structure given in Example~\ref{example:trivial}. For a symmetric monoidal category $(\E,\x,\I, \sigma)$, $I$ is also a symmetric Rosick\'y tangentoid. 
\end{example}

\begin{example}
\label{example:symmetric-tangentoid-vector-spaces} Let $\Mod_R$ be the category of modules over a commutative ring $R$, which is a symmetric monoidal category given by the usual algebraic tensor product of modules $\otimes$. In $(\Mod_R,\x,R, \sigma)$, the $R$-module $D\=R\+R$ is a symmetric Rosick\'y tangentoid. The projection $p\colon R\+R \to R$ is given by the first projection, $p(x,y) =x$, with pullbacks given by:
\begin{align*}
&D_n \= R\+ \underbrace{R \+ \hdots \+ R}_{n\text{ times}}
\end{align*}
The zero $z\colon R\+R \to R$ is given by the first injection, $z(x) = (x,0)$, the sum $s\colon R \+ R \+ R \to R \+ R$ sums the second and third components, $s(x,y,z) = (x, y+z)$, while the negation $n\colon R\+R \to R \+ R$ makes the second component negative, $n(x,y) = (x,-y)$. Lastly, the vertical lift $l \colon R \+ R \to (R \+ R) \x (R \+ R)$ inserts $l(x,y) = (x,0) \x (1,0) + (0,1) \x (0,y)$. Moreover, observe that for a $R$-module $M$, we have that $D \x M \cong M \+ M$. As such the induced tangent structure from $D$ is equivalent to the biproduct tangent structure given in Example~\ref{example:biproducts}. In fact, it is easy to see that this example generalizes to any symmetric monoidal category with finite biproducts that are preserved by the monoidal product. 
\end{example}

\begin{example}
\label{example:tangentoid-dual-numbers} Let $R$ be a commutative ring. Then $\cAlg_R$ is a symmetric monoidal category where the monoidal product is given by the usual tensor product of $R$-algebras $\x_R$. The ring of dual numbers $R[\epsilon]$ is a symmetric Rosick\'y tangentoid in $\cAlg_R$. The projection $p\colon R[\epsilon]\to R$ sends $\epsilon$ to $0$, that is, the augmentation map, with pullbacks given by $R[\epsilon]_n \= R[\epsilon_1,\hdots, \epsilon_n] \= R[x_1, \hdots, x_n]/(x_ix_j)$. The zero morphism $z\colon R\to R[\epsilon]$ sends $1$ to itself, that is, the canonical inclusion, the sum morphism $R[\epsilon_1,\epsilon_2]\to R[\epsilon]$ sends both $\epsilon_1$ and $\epsilon_2$ to $\epsilon$, while the negation $n\colon R[\epsilon]\to R[\epsilon]$ sends $\epsilon$ to $-\epsilon$. The lift $l\colon R[\epsilon]\to R[\epsilon] \otimes_R [\epsilon']$ sends $\epsilon$ to $\epsilon \otimes \epsilon$. Moreover, observe that for a commutative $R$-algebra $A$, we have that $A[\epsilon] \cong R[\epsilon] \otimes_R A$. Thus, the induced tangent structure by $R[\epsilon]$ is equivalent to the tangent structure given in Example~\ref{example:cAlg-canonical}. 
\end{example}

\subsection{Tangentoids in (co)Cartesian monoidal categories}
\label{subsection:tangentoids-cocartesian}

Our primary goal is to characterize tangentoids in the category $\cAlg_R$ of commutative algebras. However, the monoidal structure of $\cAlg_R$ is coCartesian, that is, the monoidal product is given by binary coproducts and the unit is initial. In fact, recall that any category with finite coproducts can be seen as a symmetric monoidal category, which we refer to as a coCartesian monoidal category (in other words, a symmetric monoidal category whose monoidal product is a coproduct and whose monoidal unit is an initial object). Therefore, we may consider tangentoids in coCartesian monoidal categories. In such a setting, every tangentoid is in fact symmetric. 

\begin{proposition}
\label{proposition:symmetric-tangentoid-cocartesian}
In a coCartesian monoidal category, every (Rosick\'y) tangentoid is symmetric.
\end{proposition}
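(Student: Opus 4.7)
The plan is to show that in any coCartesian monoidal category the canonical flip $c\colon D\x D\to D\x D$ of a tangentoid is forced to coincide with the symmetry $\sigma_{D,D}$. The essential point is that since $\x$ is a coproduct and $\I$ is initial, a morphism out of $D\x D$ is determined by its two coprojection components, so it suffices to check that $c$ swaps the two coprojections.

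First, I would identify the coproduct injections. Since $\I$ is initial, there is a unique map $\I\to D$, and hence the zero morphism $z\colon\I\to D$ of the tangentoid must coincide with this unique morphism. Suppressing unitors (as is the paper's convention), the coproduct injections $\iota_1,\iota_2\colon D\to D\x D$ are then
\begin{equation*}
\iota_1 \;=\; \id_D\x z,\qquad \iota_2 \;=\; z\x\id_D,
\end{equation*}
and $\sigma_{D,D}$ is characterized as the unique endomorphism of $D\x D$ satisfying $\sigma\o\iota_1=\iota_2$ and $\sigma\o\iota_2=\iota_1$.

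Next I would read off from Definition~\ref{definition:tangentoid} the two axioms needed. The second square of axiom (iii) says $c\o(z\x\id_D)=\id_D\x z$, i.e., $c\o\iota_2=\iota_1$. The middle square of axiom (iv) gives the involutivity $c\o c=\id_{D\x D}$, so applying $c$ to the previous equation yields $\iota_2=c\o\iota_1$. Thus $c$ swaps the two coproduct injections, and by the universal property of the coproduct $c=\sigma_{D,D}$, so the tangentoid is symmetric. For the Rosick\'y case, the negation axiom does not mention $c$, so nothing further is required.

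I do not anticipate any real obstacle: the argument is a direct application of the coproduct universal property to axioms already present in the definition of a tangentoid. The only point worth a moment of care is the identification $z=!_D$ that comes from $\I$ being initial, which is what lets the tangentoid axioms for $c$ be read as statements about the coproduct injections in the first place.
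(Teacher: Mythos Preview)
Your proposal is correct and follows essentially the same argument as the paper: identify $z$ with the unique map out of the initial object so that the coproduct injections are $\id_D\x z$ and $z\x\id_D$, use the second square of axiom~(iii) to get that $c$ swaps one injection to the other, invoke involutivity for the reverse, and conclude $c=\sigma$ by the universal property of the coproduct. The only cosmetic difference is which injection you call $\iota_1$ versus $\iota_2$.
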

\begin{proof} 
Let $D$ be a tangentoid in a coCartesian monoidal category. Since the monoidal unit $\I$ is an initial object, the zero morphism of $D$ must be the unique morphism $!\colon\I\to D$. Therefore, the compatibility between the canonical flip and the zero morphism reads as follows:
\begin{equation*}
\begin{tikzcd}
{D\x D} & {D\x D} \\
D
\arrow["c", from=1-1, to=1-2]
\arrow["{\iota_1}", from=2-1, to=1-1]
\arrow["{\iota_2}"', from=2-1, to=1-2]
\end{tikzcd}
\end{equation*}
where $\iota_j\colon D \to D \x D$ are the injection maps of the coproduct. From involutivity, we also get that $c \o \iota_2=\iota_1$. However, symmetry isomorphism $\sigma$ also satisfies $\sigma \o \iota_1 = \iota_2$ and $\sigma \o \iota_2 = \iota_1$. Then by the universal property of coproducts, it follows that $c = \sigma$. Thus, every tangentoid is symmetric.
\end{proof}

\begin{corollary}
\label{corollary:tangentoid-in-cALG-are-symmetric}
Every tangentoid in the coCartesian monoidal category $\cAlg_R$ is symmetric.
\end{corollary}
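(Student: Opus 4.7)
The plan is simply to invoke Proposition~\ref{proposition:symmetric-tangentoid-cocartesian} after verifying that the symmetric monoidal category $(\cAlg_R, \otimes_R, R, \sigma)$ used throughout Section~\ref{subsection:symmetric-tangentoids} is actually coCartesian. So the first step is to recall the standard algebraic fact that for commutative $R$-algebras $A$ and $B$, the tensor product $A \otimes_R B$ equipped with the two canonical algebra homomorphisms $a \mapsto a \otimes 1$ and $b \mapsto 1 \otimes b$ is the coproduct of $A$ and $B$ in $\cAlg_R$, and that $R$ itself is the initial object (since for any commutative $R$-algebra $A$ the unit map $R \to A$ is the unique $R$-algebra homomorphism).

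With this identification in hand, the monoidal category $(\cAlg_R, \otimes_R, R, \sigma)$ of Example~\ref{example:tangentoid-dual-numbers} is a coCartesian monoidal category in the sense introduced just before Proposition~\ref{proposition:symmetric-tangentoid-cocartesian}. Therefore the hypotheses of that proposition are met, and every (Rosick\'y) tangentoid in $\cAlg_R$ has its canonical flip equal to the symmetry of $\otimes_R$, i.e.\ is symmetric. There is no real obstacle here: the corollary is a direct specialization of the preceding proposition to a specific symmetric monoidal category, with the only nontrivial ingredient being the universal property of $\otimes_R$ as a coproduct of commutative $R$-algebras.
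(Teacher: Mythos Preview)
Your proposal is correct and matches the paper's approach: the corollary is stated without proof immediately after Proposition~\ref{proposition:symmetric-tangentoid-cocartesian}, being a direct specialization to $\cAlg_R$ once one knows that $\otimes_R$ is the coproduct and $R$ the initial object there. Your explicit verification of the coCartesian structure is exactly the (unstated) ingredient the paper is relying on.
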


It is natural to wonder if a similar simplification occurs for tangentoids in a category with finite products. It turns out that for Cartesian monoidal categories (which are symmetric monoidal categories whose monoidal product is a product and the unit is a terminal object), the only tangentoids are terminal objects. 

\begin{proposition}
\label{proposition:symmetric-tangentoid-cartesian}
The only tangentoid (up to unique isomorphism) in a Cartesian monoidal category is the terminal object.
\end{proposition}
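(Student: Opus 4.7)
The strategy is to unwind the universality of the vertical lift (axiom (vi)) in the Cartesian setting, where it collapses into a very restrictive statement. Since $\I$ is terminal in $\E$, the $n$-fold pullback $D_n$ is just the $n$-fold product $D^n$, and under the unitor $D\x\I\cong D$ the right vertical $\id_D\x p$ identifies with the first projection $\pi_1\colon D\x D\to D$. The crucial observation is that the left leg of the outer pullback square of axiom (vi) is $p\o\pi_1\colon D_2\to\I$, which lands in $\I$ rather than in $D$; so the pullback being tested is that of the cospan $D\x D\xrightarrow{\pi_1}D\xleftarrow{z}\I$, which is canonically isomorphic to $D$ itself.

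Axiom (vi) therefore asserts that the induced comparison $\psi\colon D\x D\to D$ is an isomorphism. Unwinding the top composite through the Cartesian identifications yields the explicit formula $\psi = s\o\langle l_2\o\pi_1,\pi_2\rangle$, where $l_2\=\pi_2\o l$. My plan is then to produce a section of $\psi$ from the monoid unit: the morphism $\iota\=\langle z\o !_D,\id_D\rangle\colon D\to D\x D$ satisfies $\psi\o\iota=\id_D$. Indeed, axiom (ii) gives $l\o z=(\id_D\x z)\o z$, which in the Cartesian setting translates into $l_2\o z=z$, and the additive bundle unit law $s\o\langle z\o p,\id_D\rangle=\id_D$ then yields the desired equality.

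With $\psi$ an isomorphism and $\iota$ a section, $\iota$ must equal $\psi^{-1}$. Post-composing $\iota\o\psi=\id_{D\x D}$ with $\pi_1$ yields $\pi_1=z\o !_D\o\psi$ as morphisms $D\x D\to D$, a morphism that factors through $\I$. Precomposing with $\langle\id_D,z\o !_D\rangle\colon D\to D\x D$ then gives $\id_D=z\o !_D$, exhibiting $\id_D$ as factoring through $\I$. Since $!_D\o z=\id_\I$ automatically, $z$ and $!_D$ are mutually inverse, so $D\cong\I$. Uniqueness up to unique isomorphism is automatic since $\I$ admits only the identity endomorphism.

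The main subtlety will be correctly reading the universality pullback in the Cartesian setting: its left leg reaches $\I$, not $D$, so the pullback collapses from the na\"ive $D\x D$ down to $D$. Once this is observed, the presence of the global element $z\colon\I\to D$, combined with the monoid unit law, immediately forces $D$ to be terminal.
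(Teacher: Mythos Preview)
Your argument is correct. You work directly with the pullback formulation of axiom~(vi): in the Cartesian setting the cospan $D\times D\xrightarrow{\pi_1}D\xleftarrow{z}\I$ has pullback $D$, so axiom~(vi) forces the comparison $\psi\colon D\times D\to D$ to be invertible; the unit law of the additive bundle then pins down $\psi^{-1}=\langle z\o!_D,\id_D\rangle$, and reading off the first component yields $\id_D=z\o!_D$. All the identifications you make (notably $\psi=s\o\langle l_2\o\pi_1,\pi_2\rangle$ and $l_2\o z=z$) check out.

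The paper takes a different but closely related route. Instead of the pullback square, it invokes the triple-equalizer reformulation of the universality of the lift (the analogue of Lemma~\ref{lemma:rosicky-tangentoids}): in a Cartesian category the three parallel maps become $\pi_1$, $\pi_2$, and the constant $z\o!_{D\times D}$, which forces $l=\Delta\o z\o p$. The equalizer's uniqueness clause then shows that \emph{every} endomorphism $f\colon D\to D$ satisfies $l\o f=l$, hence $f=\id_D$; in particular $z\o p=\id_D$. Your approach trades the equalizer lemma for an explicit computation of the comparison map and a section argument; the paper's approach avoids that computation at the cost of first passing through the equalizer reformulation. Both arguments ultimately exploit the same collapse (the universal lift constraint becomes vacuous over a terminal base), and neither is materially shorter than the other.
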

\begin{proof}
Let $D$ be a tangentoid in a Cartesian monoidal category. We need to show that $D$ is isomorphic to the monoidal unit $I$, which is a terminal object. By the universal property of the terminal object, the projection morphism $p\colon D \to I$ must be the unique morphism from $D$ to $I$. By Lemma~\ref{lemma:rosicky-tangentoids}, the following diagram is a triple equalizer
\begin{equation*}
\begin{tikzcd}
D & {D\times D} && D
\arrow["l", from=1-1, to=1-2]
\arrow["{\id_D\times p}"', curve={height=18pt}, from=1-2, to=1-4]
\arrow["{p\o z\times p}", curve={height=-18pt}, from=1-2, to=1-4]
\arrow["{p\times\id_D}"{description}, from=1-2, to=1-4]
\end{tikzcd}
\end{equation*}
where we used the strict notation for the unitors, that is, $D\times\*=D=\*\times D$. However, $\id_D\times p\colon D\times D\to D$ and $p\times\id_D\colon D\times D\to D$ are respectively the projections $\pi_1$ and $\pi_2$ of the product. Therefore, by the universal property of the product, $l$ must coincide with the morphism $\Delta\o z\o p$, where $\Delta\=\<\id_D,\id_D\>\colon D\to D\times D$ is the diagonal map. Now consider a morphism $f\colon D\to D$. From the universal property of the terminal object, we get that $p\o f=p$, and thus:
\begin{align*}
&l\o f=\Delta\o z\o p\o f=\Delta\o z\o p=l
\end{align*}
Therefore, by the universal property of the vertical lift, it follows that there is a unique comparison morphism $D\to D$ between $l\o f$ and $l$, which must be the identity. Since this works for every morphism $f$, we get that $z\o p=\id_D$. However, since $p \o z = \id_I$, it follows that $D \cong I$, and so $D$ is also a terminal object. So we conclude that every tangentoid in a Cartesian monoidal category is a terminal object.
\end{proof}

\begin{corollary}
\label{corollary:tangentoid-AFF}
The only tangentoid (up to unique isomorphism) in the Cartesian monoidal category $\Aff_R$ of affine schemes over $R$ is the terminal object $R$.
\end{corollary}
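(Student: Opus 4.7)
The plan is to obtain this corollary as a direct instance of Proposition~\ref{proposition:symmetric-tangentoid-cartesian}, so the main task is to verify that $\Aff_R$ is a Cartesian monoidal category and to identify its terminal object.

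First, I would recall that $\Aff_R$ is by definition the opposite category $\cAlg_R^{\op}$. Since $\cAlg_R$ admits finite coproducts, given on underlying rings by the tensor product $\x_R$ with unit the base ring $R$ (which is the initial object of $\cAlg_R$), the opposite category $\Aff_R$ admits finite products, with terminal object represented by $R$. Thus, equipping $\Aff_R$ with these products gives a Cartesian monoidal structure, where the monoidal unit is the affine scheme $\Spec(R)$, which we denote simply by $R$ by a slight abuse of notation.

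With this observation in hand, Proposition~\ref{proposition:symmetric-tangentoid-cartesian} directly applies: any tangentoid in a Cartesian monoidal category is isomorphic to the terminal object. Hence the unique (up to unique isomorphism) tangentoid in $\Aff_R$ is the terminal object $R$. There is no real obstacle here; the content of the corollary lies entirely in Proposition~\ref{proposition:symmetric-tangentoid-cartesian}, and the corollary is merely a specialization. One remark worth making in the write-up is that this contrast with Corollary~\ref{corollary:tangentoid-in-cALG-are-symmetric} (which concerns the coCartesian side $\cAlg_R$, where many nontrivial tangentoids exist) is precisely what motivates passing to coexponentiable tangentoids in $\cAlg_R$ rather than working directly in $\Aff_R$.
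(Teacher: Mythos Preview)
Your proposal is correct and matches the paper's approach exactly: the corollary is stated immediately after Proposition~\ref{proposition:symmetric-tangentoid-cartesian} with no separate proof, since it is just the specialization of that proposition to the Cartesian monoidal category $\Aff_R=\cAlg_R^{\op}$, whose terminal object is $R$. Your added remark about the contrast with Corollary~\ref{corollary:tangentoid-in-cALG-are-symmetric} is a nice touch but not needed for the argument.
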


\subsection{Coexponentiable tangentoids}
\label{subsection:coexp-tangentoids}
By Theorem~\ref{theorem:tangentoid-tangent-structure}, a tangentoid in a monoidal category $\E$ is equivalent to a tangent structure on $\E$ whose tangent bundle functor is a functor $D\x-$, for some object $D$. It follows that $D_n\x-$ corresponds to $\T_n$. When each of the functor $D_n\x-$ admits a left adjoint, by the adjoint theorem~\cite[Proposition~5.17]{cockett:tangent-cats}, the opposite category of $\E$ admits a tangent structure. This section introduces coexponentiable tangentoids, which are precisely those tangentoids whose associated tangent structure $D\x-$ can be ``dualized'' to define a new tangent structure on the opposite category.

\begin{definition}
\label{definition:coexponentiable-tangentoid}
A tangentoid $D$ in a monoidal category $(\E,\x,\I)$ is \textbf{coexponentiable} provided that each functor $D_n\x-\colon\E\to\E$ admits a left adjoint. 
\end{definition}

\begin{lemma}
\label{lemma:coexponentiable-tangentoid-tangent-category}
Given a monoidal category $(\E,\x,\I)$, every coexponentiable tangentoid $D$ in $\E$ induces a tangent structure on the opposite category $\E^\op$ whose tangent bundle functor is the (opposite of the) left adjoint $\T^\o$ of $D\x-$ in $\E$.
\end{lemma}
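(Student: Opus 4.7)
The plan is to combine Proposition~\ref{proposition:tangentoid-tangent-structure1} with the adjoint theorem for tangent structures \cite[Proposition~5.17]{cockett:tangent-cats}. First, by Proposition~\ref{proposition:tangentoid-tangent-structure1}, the tangentoid $D$ equips $\E$ with a tangent structure $[D\x-]$ whose tangent bundle functor is $\T \= D\x-$, whose projection natural transformation is $p\x-$, and whose remaining structural natural transformations are obtained by tensoring the structural morphisms of $D$ with the identity. In particular, thanks to the preservation axiom in Definition~\ref{definition:tangentoid} (see Remark~\ref{remark:additivity-axiom}), the $n$-fold pullback of $p\x-\colon \T(-)\to -$ along itself is precisely $\T_n \= D_n\x-$, so the tangent structure $[D\x-]$ on $\E$ has the particularly convenient property that every $\T_n$ is itself a functor of the form $D_n\x-$.

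Next, the coexponentiability hypothesis says that each $D_n \x -$ admits a left adjoint; equivalently, each $\T_n$ admits a left adjoint $\T_n^\o\colon \E\to\E$. This is exactly the hypothesis required by \cite[Proposition~5.17]{cockett:tangent-cats}, which states that whenever a tangent structure has the property that every iterated pullback $\T_n$ admits a left adjoint, the opposite category inherits a tangent structure whose tangent bundle functor is (the opposite of) $\T^\o$, with structural natural transformations obtained by taking mates of the structural natural transformations of $\TT$ along the chain of adjunctions $\T_n^\o\dashv\T_n$. Applying this to $[D\x-]$ produces a tangent structure $[D\x-]^\o$ on $\E^\op$ whose tangent bundle functor is $\T^\o$ as claimed.

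The only subtlety is that Cockett and Cruttwell's adjoint theorem requires one to know in advance that all the $\T_n$ are adjunctable, not merely $\T$. In an arbitrary monoidal category, the mere existence of a left adjoint to $D\x-$ does not directly yield left adjoints to $D_n\x-$, since the $D_n$ are defined by pullbacks in $\E$ rather than by coproducts that the left adjoint could carry along. However, this is precisely why Definition~\ref{definition:coexponentiable-tangentoid} demands a left adjoint for every $D_n\x-$, and not only for $D\x-$; with this assumption, the hypothesis of \cite[Proposition~5.17]{cockett:tangent-cats} is met directly, and no further work is needed. (Note that, when $\E$ has suitable finite colimits, one could instead invoke Lemma~\ref{lemma:Frankland-lemma} to reduce the coexponentiability of each $D_n$ to that of $D$ alone, mirroring the reduction carried out in Lemma~\ref{lemma:Frankland-lemma-representability}, but such a reduction is not required here.)

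The main obstacle is essentially bookkeeping: identifying the iterated pullbacks $\T_n$ of $p\x-$ with the functors $D_n\x-$, so that adjointness of $D_n\x-$ matches the adjointness hypothesis of \cite[Proposition~5.17]{cockett:tangent-cats}. Once this identification is recorded, the result is an immediate application of the two cited results, with the Rosick\'y case obtained by transferring the negation $n\x-$ along the adjunction in the same manner.
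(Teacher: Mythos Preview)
Your proof is correct and matches the paper's approach: the paper states this lemma without an explicit proof, relying on the discussion immediately preceding Definition~\ref{definition:coexponentiable-tangentoid}, which invokes Theorem~\ref{theorem:tangentoid-tangent-structure} (of which Proposition~\ref{proposition:tangentoid-tangent-structure1} is one half) together with the adjoint theorem \cite[Proposition~5.17]{cockett:tangent-cats}, exactly as you do. Your additional commentary on why coexponentiability must be required for every $D_n$ is accurate and helpful, though not present in the paper.
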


We may now precisely state the relationship between tangentoids and infinitesimal objects.

\begin{lemma}
\label{lemma:infinitesimal-objects-tangentoids}
In a category $\X$ with finite products, an infinitesimal object is equivalent to a coexponentiable tangentoid in the opposite category $\X^\op$ (seen as a coCartesian monoidal category). 
\end{lemma}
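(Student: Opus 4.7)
My plan is to exhibit a direct, data-by-data correspondence between the two notions that preserves all structural morphisms and axioms under arrow reversal, and then to show that the (co)exponentiability conditions also match up. The main technical point is recognizing that the ``missing'' zero morphism in the infinitesimal-object definition is automatic in $\X$, because in $\X^\op$ the monoidal unit is the terminal object of $\X$.

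First I would fix the monoidal structure: equipping $\X^\op$ with its coCartesian structure makes the monoidal product $\x$ agree with the product taken in $\X$, and the monoidal unit $\I$ agree with the terminal object $\*$ of $\X$. Under these identifications, I would check that the data of a tangentoid $(D,p,z,s,l,c)$ in $\X^\op$ correspond, by reversing arrows, exactly to the data of an infinitesimal object: $p\colon D\to\I$ becomes the coprojection $p\colon\*\to D$ in $\X$; $s\colon D_2\to D$ becomes the cosum $s\colon D\to D_2$ (where $D_n$ is the $n$-fold pushout along $p$ in $\X$, equivalently the $n$-fold pullback in $\X^\op$); $l\colon D\to D\x D$ becomes the vertical colift $l\colon D\times D\to D$; and $c\colon D\x D\to D\x D$ becomes the canonical flip $c\colon D\times D\to D\times D$. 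The zero morphism $z\colon\I\to D$ of the tangentoid corresponds in $\X$ to the unique morphism $D\to\*$, so it is automatic and absent from the infinitesimal-object data. The preservation requirement that each $D^n\x-$ preserve the pullbacks $D_m$ in $\X^\op$ translates to the preservation by each $D^n\times-$ in $\X$ of the pushouts $D_m$, matching the infinitesimal-object axiom; in the Rosick\'y case, the negation $n\colon D\to D$ is self-dual and its axiom matches likewise.

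Next I would show that the two adjointness conditions correspond. The functor $D_n\x-\colon\X^\op\to\X^\op$ is, up to the identification of $\X^\op$ with the opposite of $\X$, the functor $D_n\times-\colon\X\to\X$; hence it admits a left adjoint in $\X^\op$ precisely when $D_n\times-$ admits a right adjoint in $\X$, i.e., exactly when $D_n$ is exponentiable in $\X$. This matches the exponentiability built into the definition of an infinitesimal object.

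Finally, for the axioms themselves I would avoid writing out every diagram by invoking two results already in the paper. By Theorem~\ref{theorem:tangentoid-tangent-structure} together with Lemma~\ref{lemma:coexponentiable-tangentoid-tangent-category}, a coexponentiable tangentoid in $\X^\op$ is the same as a tangent structure on $\X^\op$ whose tangent bundle functor is $D\x-$ and which dualizes to a tangent structure on $\X$ whose $n$th tangent bundle functor is the right adjoint $(-)^{D_n}$ of $D_n\times-$, namely a representable tangent structure. By Proposition~\ref{proposition:infinitesimal-objects-representable-tang-cats}, representable tangent structures on $\X$ correspond to infinitesimal objects, and under the identification of data above, this correspondence sends the evaluated-at-$\I$ structural morphisms (cf.\ Proposition~\ref{proposition:tangentoid-tangent-structure2}) to the structural morphisms of the infinitesimal object. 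Composing the two equivalences yields the desired bijection. The step I expect to be the most delicate is this compatibility of the data identification across the two equivalences; concretely, I must verify that the tangentoid data extracted from an infinitesimal object via its representable tangent structure agree, after evaluating at the monoidal unit $\I=\*$, with the arrow-reversed structural morphisms of that infinitesimal object. This is essentially bookkeeping, but requires carefully tracing through the units and counits of the adjunctions $D_n\times-\dashv(-)^{D_n}$.
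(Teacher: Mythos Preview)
The paper does not give a formal proof of this lemma; it is stated as essentially definitional, with the only commentary being the paragraph immediately following it, which notes that by Proposition~\ref{proposition:symmetric-tangentoid-cocartesian} every tangentoid in a coCartesian monoidal category is symmetric, explaining why the canonical flip is absent from Definition~\ref{definition:infinitesimal-object}. Your plan is correct and considerably more detailed than what the paper offers: the data-by-data matching in your first two paragraphs is exactly the content of the lemma, and your third paragraph, routing through Theorem~\ref{theorem:tangentoid-tangent-structure}, Lemma~\ref{lemma:coexponentiable-tangentoid-tangent-category}, and Proposition~\ref{proposition:infinitesimal-objects-representable-tang-cats}, is a legitimate alternative that avoids rechecking each axiom by hand.

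One small gap in your data-matching: you correctly observe that the zero morphism $z\colon\I\to D$ is forced (it is the unique map $D\to\*$ in $\X$) and hence absent from the infinitesimal-object data, but you then list the canonical flip $c$ as part of the data to be matched. In fact Definition~\ref{definition:infinitesimal-object} has no canonical-flip datum either, for the same kind of reason: in the coCartesian monoidal category $\X^\op$ the flip is forced to be the symmetry $\sigma_{D,D}$ by Proposition~\ref{proposition:symmetric-tangentoid-cocartesian}, so it too is automatic. You should add this observation alongside your remark about $z$; otherwise your bijection of data is not quite well-posed. Once that is in place, the indirect route through representable tangent structures is unnecessary---the lemma really is just a dualization of definitions, with two pieces of data becoming redundant on the coCartesian side---though your route does have the virtue of making the axiom verification invisible.
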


In other words, an infinitesimal object is equivalently an ``\textit{exponentiable cotangentoid}'' in a Cartesian monoidal category. Moreover, recall that by Proposition~\ref{proposition:symmetric-tangentoid-cocartesian}, every tangentoid in a coCartesian monoidal category is symmetric. Thus, every infinitesimal object induces a symmetric tangentoid in the opposite category, which explains why the extra canonical flip structural map is not necessary in the definition of an infinitesimal object. 

\par Thanks to Lemma~\ref{lemma:infinitesimal-objects-tangentoids}, coexponentiable tangentoids in the coCartesian monoidal category $\cAlg_R$ correspond to infinitesimal objects in the category $\Aff_R$. Dually, infinitesimal objects of $\cAlg_R$ are precisely coexponentiable tangentoids in $\Aff_R$, which we know are trivial. 

\begin{corollary}
\label{corollary:no-infinitesimal-objects-CALG}
The only infinitesimal object in the coCartesian monoidal category $\cAlg_R$ is (up to unique isomorphism) the initial object $R$. 
\end{corollary}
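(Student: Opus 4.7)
The statement is essentially the categorical dual of Corollary~\ref{corollary:tangentoid-AFF}, so the plan is to apply Lemma~\ref{lemma:infinitesimal-objects-tangentoids} to translate the problem across opposites. First, I would read Lemma~\ref{lemma:infinitesimal-objects-tangentoids} in the opposite direction: taking $\X=\Aff_R$ viewed as a Cartesian monoidal category, so that $\X^\op=\cAlg_R$ is viewed as a coCartesian monoidal category, the Lemma gives a bijection between infinitesimal objects in $\cAlg_R$ and coexponentiable tangentoids in $\Aff_R$. This is exactly the content of the sentence preceding the corollary in the paper, so no additional categorical machinery is required beyond the Lemma itself.

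Next I would invoke Corollary~\ref{corollary:tangentoid-AFF} to conclude that, up to unique isomorphism, the only tangentoid in the Cartesian monoidal category $\Aff_R$ is the terminal object $R$. It then remains to check that $R$ is coexponentiable as a tangentoid, which is immediate: in any Cartesian monoidal category, the terminal object is the monoidal unit, so the functor $R\times(-)$ is naturally isomorphic to the identity functor on $\Aff_R$, and the identity functor is trivially its own left adjoint. Translating back through the equivalence $\Aff_R\simeq\cAlg_R^\op$, the terminal object $R$ of $\Aff_R$ corresponds to the initial object $R$ of $\cAlg_R$, yielding the claim.

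I do not anticipate any substantive obstacle here: the corollary is a clean consequence of the two cited results, and the only subtlety is keeping straight the dualization dictionary between tangentoids and infinitesimal objects, and between Cartesian and coCartesian monoidal structures — a bookkeeping matter already encapsulated in Lemma~\ref{lemma:infinitesimal-objects-tangentoids}. Once this correspondence is in hand and coexponentiability of the monoidal unit is observed, the result follows immediately.
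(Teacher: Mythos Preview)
Your approach is essentially the paper's own. The paper gives no explicit proof; the corollary is justified by the sentence immediately preceding it (``Dually, infinitesimal objects of $\cAlg_R$ are precisely coexponentiable tangentoids in $\Aff_R$, which we know are trivial''), i.e., exactly the combination of Lemma~\ref{lemma:infinitesimal-objects-tangentoids} and Corollary~\ref{corollary:tangentoid-AFF} that you propose, together with your clean extra observation that the monoidal unit is trivially coexponentiable.

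One small slip to fix: when you instantiate Lemma~\ref{lemma:infinitesimal-objects-tangentoids} you write ``taking $\X=\Aff_R$ \ldots\ the Lemma gives a bijection between infinitesimal objects in $\cAlg_R$ and coexponentiable tangentoids in $\Aff_R$.'' With $\X=\Aff_R$ the lemma actually yields infinitesimal objects in $\Aff_R$ versus coexponentiable tangentoids in $\cAlg_R$ --- the opposite of what you want. The instantiation you need is $\X=\cAlg_R$, so that $\X^{\op}=\Aff_R$; this is precisely how the paper's preceding sentence reads the dualization. Once you swap that, your argument matches the paper verbatim.
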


\subsection{Monoid structure of a tangentoid}
\label{subsection:tangentoids-monoids-structure}

To conclude this section, we explain how every tangentoid has a canonical monoid structure, justifying the name tagentoid which comes from the contration of \textit{tangent} and \textit{monoid}. This monoid structure in fact comes from the canonical monad structure on the tangent bundle functor. Moreover, in symmetric monoidal categories, the monoid structure on a tangentoid is always commutative, regardless of whether the tangentoid is symmetric or not. 

Recall that in a monoidal category $(\E,\x,\I)$, a \textbf{monoid} is a triple $(A, \mu, \eta)$ consisting of an object $A$ equipped with a morphism $\mu\colon A\x A\to A$ and a morphism $\eta\colon\I\to A$, such that the following diagrams commute:
\begin{equation*}
\begin{tikzcd}
{A\x A\x A} & {A\x A} \\
{A\x A} & A
\arrow["{\id_A\x\mu}", from=1-1, to=1-2]
\arrow["{\mu\x\id_A}"', from=1-1, to=2-1]
\arrow["\mu", from=1-2, to=2-2]
\arrow["\mu"', from=2-1, to=2-2]
\end{tikzcd} \begin{tikzcd}
A & {A\x A} & A \\
& A
\arrow["{\id_A\x\eta}", from=1-1, to=1-2]
\arrow[equals, from=1-1, to=2-2]
\arrow["\mu"{description}, from=1-2, to=2-2]
\arrow["{\eta\x\id_A}"', from=1-3, to=1-2]
\arrow[equals, from=1-3, to=2-2]
\end{tikzcd}
\end{equation*}
In a symmetric monoidal category $(\E,\x,\I, \sigma)$, a monoid $(A, \mu, \eta)$ is \textbf{commutative} if the following diagram commutes:
\begin{equation*}
\begin{tikzcd}
& {A\x A} \\
{A\x A} & A
\arrow["\mu", from=1-2, to=2-2]
\arrow["{\sigma_{A,A}}", from=2-1, to=1-2]
\arrow["\mu"', from=2-1, to=2-2]
\end{tikzcd}
\end{equation*}

\begin{lemma}
\label{lemma:tangentoids-are-monoids}
For a tangentoid $D$ in a monoidal category $(\E,\x,\I)$, the following morphisms
\begin{equation*}
\eta\colon\I\xrightarrow{z}D\qquad
\mu\colon D\x D\xrightarrow{\<\id_D\x p,p\x\id_D\>}D\times_\I D=D_2\xrightarrow{s}D
\end{equation*}
equip $D$ with a monoid structure. Moreover, when $(\E,\x,\I, \sigma)$ is a symmetric monoidal category, the monoid structure on $D$ is commutative.
\end{lemma}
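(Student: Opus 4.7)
The plan is to verify the monoid axioms directly from the axioms of the additive bundle $(p,z,s)$ together with standard monoidal-category bookkeeping. For the unit axioms, the first step is to compute that, after the unitor identifications and using $p\o z=\id_\I$, one has $\<\id_D\x p,p\x\id_D\>\o(z\x\id_D)=\<z\o p,\id_D\>\colon D\to D_2$. Postcomposing with $s$ and applying the unit axiom of the additive bundle (the last diagram in Definition~\ref{definition:additive-bundle}) yields $\mu\o(\eta\x\id_D)=\id_D$. The other unit axiom $\mu\o(\id_D\x\eta)=\id_D$ then follows symmetrically, using commutativity of $s$ to pass from $s\o\<z\o p,\id_D\>=\id_D$ to $s\o\<\id_D,z\o p\>=\id_D$.

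For associativity, the key observation is that $p\o\mu=p\x p$, obtained from $p\o s=p\o\pi_1$ on $D_2$ together with the definition of $\mu$. Using this, both $\mu\o(\mu\x\id_D)$ and $\mu\o(\id_D\x\mu)$ factor canonically through the triple pullback $D\times_\I D\times_\I D$ via the map $\<\id_D\x p\x p,p\x\id_D\x p,p\x p\x\id_D\>$, and postcomposing with the associative ternary sum $s\o(s\times_\I\id_D)=s\o(\id_D\times_\I s)$. The equality of the two iterated sums is exactly the associativity axiom of the additive bundle, and so $\mu$ is associative.

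For commutativity when $\E$ is symmetric monoidal, I would use naturality of $\sigma$ together with the compatibility of $\sigma$ with the unitors to show that $\<\id_D\x p,p\x\id_D\>\o\sigma_{D,D}=\<\pi_2,\pi_1\>\o\<\id_D\x p,p\x\id_D\>$; that is, precomposing with the symmetry swaps the two components of the pullback pair. Then $\mu\o\sigma_{D,D}=s\o\<\pi_2,\pi_1\>\o\<\id_D\x p,p\x\id_D\>=s\o\<\id_D\x p,p\x\id_D\>=\mu$ by commutativity of the additive bundle. Notably, this argument only uses the symmetry $\sigma$ of $\E$ and never the canonical flip $c$, so it applies to every tangentoid in a symmetric monoidal category, not only the symmetric ones.

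The main obstacle will be the associativity step, where one must carefully track the interplay between the monoidal product $\x$ and the $\I$-pullbacks $D_n$, in particular verifying that the pairing $\<\id\x p,p\x\id\>$ is compatible with further application of $\mu$ on one of its arguments. An alternative and somewhat slicker route is to invoke Proposition~\ref{proposition:leung-functor}: in $\Weil$, the rig $W$ is a commutative monoid whose multiplication $W\x W\to W$ factors through $W_2$ as $s\o\<\id_W\x p,p\x\id_W\>$, and the strong monoidal functor $\Leung[D]\colon\Weil\to\E$ transports this commutative monoid structure to exactly the one claimed for $D$. This approach avoids the explicit pullback computations but hides them inside the proof of Proposition~\ref{proposition:leung-functor}.
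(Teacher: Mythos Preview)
Your proposal is correct. The commutativity argument is essentially identical to the paper's: both compute $\mu\o\sigma$ by pushing $\sigma$ through the pairing $\<\id_D\x p,p\x\id_D\>$ using naturality, obtaining the swapped pair, and then applying the commutativity of $s$.

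For the monoid axioms, however, the paper takes a shorter and more conceptual route than your direct verification. Rather than checking unit and associativity by hand from the additive-bundle axioms, the paper observes that by Theorem~\ref{theorem:tangentoid-tangent-structure} the tangentoid $D$ makes $\T\=D\x-$ a tangent bundle functor, and then invokes the general fact \cite[Proposition~3.4]{cockett:tangent-cats} that the tangent bundle functor of any tangent category is canonically a monad. Since a monad structure on $D\x-$ is exactly a monoid structure on $D$, one reads off $\eta=z$ and $\mu=s\o\<\id_D\x p,p\x\id_D\>$ from the monad data. Your approach has the advantage of being self-contained (it does not import the monad result from \cite{cockett:tangent-cats}) and makes transparent exactly which additive-bundle axioms drive each monoid axiom; the paper's approach buys brevity by outsourcing the verification to the literature. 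Your suggested alternative via Proposition~\ref{proposition:leung-functor} is a third valid route, closer in spirit to the paper's but transporting a monoid along $\Leung[D]$ rather than a monad along $\Leung[\TT]$.
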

\begin{proof}
Thanks to Theorem~\ref{theorem:tangentoid-tangent-structure}, $D$ equips $\E$ with a tangent structure $[D\x-]$, so in particular, the functor $\T\=D\x-$ is a tangent bundle functor. However, in every tangent category, the tangent bundle functor comes with a monad structure~\cite[Proposition~3.4]{cockett:tangent-cats}. Furthermore, a functor of type $D\x-$ is a monad if and only if $D$ is a monoid. By spelling out the monad structure on $\T$ one finds out that the unit of $D$ is $z$ and that the multiplication is $\mu$.
\par Now, suppose that $D$ is a (non-necessarily symmetric) tangentoid in a symmetric monoidal category $\E$. By naturality of the symmetry, we first obtain that:
\begin{equation*}
(\id_D\x p)\o\sigma=p\x\id_D\qquad(p\x\id_D)\o\sigma=\id_D\x p
\end{equation*}
Therefore, using these identities and the commutativity of $s$, we compute that:
\begin{gather*}
\quad\mu\o\sigma=s\o\<\id_D\x p,p\x\id_D\>\o\sigma= s\o\<(\id_D\x p)\o\sigma,(p\x\id_D)\o\sigma\>= s\o\<p\x\id_D,\id_D\x p\>\\
= s\o\tau\o\<\id_D\x p,p\x\id_D\>=s\o\<\id_D\x p,p\x\id_D\>=\mu
\end{gather*}
So we conclude that $D$ is also a commutative monoid. 
\end{proof}

Since every tangentoid is a monoid, it is natural to wonder if it is also a tangentoid in the category of monoids of the monoidal category. This holds when the base monoidal category is symmetric. So for a symmetric monoidal category $(\E,\x,\I, \sigma)$, we denote its category of commutative monoids by $\cMon(\E)$, which recall inherits the monoidal structure of $\E$. In particular, $\x$ becomes a coproduct for commutative monoids, and therefore $\cMon(\E)$ is a coCartesian monoidal category. 

\begin{lemma}
\label{lemma:tangentoids-in-monoids}
If $D$ is a symmetric tangentoid in a symmetric monoidal category $(\E,\x,\I, \sigma)$, the commutative monoid $(D,\mu,z)$ of Lemma~\ref{lemma:tangentoids-are-monoids} is a (symmetric) tangentoid in the coCartesian monoidal category $\cMon(\E)$. 
\end{lemma}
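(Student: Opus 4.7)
The plan is to exploit the Leung functor correspondence from Proposition~\ref{proposition:leung-functor}: the tangentoid $D$ corresponds to a strong symmetric monoidal functor $\Leung[D]\colon\Weil\to\E$ preserving the relevant limits. The object $W\in\Weil$ is itself a commutative rig, and hence carries a canonical commutative monoid structure in $(\Weil,\x,\N)$ given by rig multiplication $m_W\colon W\x W\to W$ with unit $z\colon\N\to W$. A direct computation with the rig presentations $W=\N[x]/(x^2)$, $W_2=\N[x_1,x_2]/(x_ix_j)$, and $W\x W=\N[x,y]/(x^2,y^2)$ shows that $m_W$ is precisely $s\o\<\id_W\x p,p\x\id_W\>$, so the monoid structure on $W$ matches the one produced by Lemma~\ref{lemma:tangentoids-are-monoids}. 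Moreover, the tangentoid structural morphisms $p$, $z$, $s$, $l$ of $W$ in $\Weil$ are by definition rig homomorphisms, hence automatically morphisms in $\cMon(\Weil)$.

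Since $\Leung[D]$ is strong monoidal, it preserves commutative monoid objects and their morphisms. Hence $D=\Leung[D](W)$ inherits a commutative monoid structure in $\E$, which coincides with the one from Lemma~\ref{lemma:tangentoids-are-monoids} by the observation above. Moreover, each of $p$, $z$, $s$, $l$ of the tangentoid $D$ is a morphism in $\cMon(\E)$. The canonical flip $c=\sigma_{D,D}$ is automatically a morphism of $D\x D$ in $\cMon(\E)$ by commutativity of the monoid $D$. In the Rosick\'y case, since the negation $n$ of $W$ in $\Weil$ is also a rig homomorphism, the same reasoning makes $n$ a morphism in $\cMon(\E)$.

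It remains to verify the tangentoid axioms in $\cMon(\E)$. The forgetful functor $U\colon\cMon(\E)\to\E$ is faithful and creates limits (a standard consequence of commutative monoids being an algebraic structure), so the pullbacks $D_n$ in $\cMon(\E)$ have underlying pullback $D_n$ in $\E$, equipped with the pointwise-induced monoid structure. The monoidal product in $\cMon(\E)$ agrees with $\x$ on underlying objects, so the requirement that $D^n\x-$ preserves the $m$-fold pullbacks of $p$ in $\cMon(\E)$ follows from the corresponding property in $\E$, which holds because $D$ is a tangentoid in $\E$. All other tangentoid axioms are diagrammatic equations and therefore lift from $\E$ to $\cMon(\E)$ via faithfulness of $U$. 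Finally, since $\cMon(\E)$ is coCartesian monoidal, Proposition~\ref{proposition:symmetric-tangentoid-cocartesian} guarantees that $D$ is automatically symmetric as a tangentoid in $\cMon(\E)$.

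The main obstacle is the concrete identification that the tangentoid data of $W$ in $\Weil$ yields the rig multiplication on $W$, matching the monoid structure from Lemma~\ref{lemma:tangentoids-are-monoids}. This verification involves a careful computation with the rig presentations and is tedious but routine; once established, the rest of the proof is largely formal, relying on the 2-functoriality of $\cMon(-)$ applied to the strong monoidal functor $\Leung[D]$.
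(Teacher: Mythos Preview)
Your proof is correct and takes a genuinely different route from the paper. The paper argues in essentially one line: in any tangent category the structural natural transformations are monad morphisms for the tangent-bundle monad (a general fact from \cite{cockett:tangent-cats}); specialising to the tangent structure $[D\x-]$ immediately gives that $p,z,s,l,c$ are monoid morphisms for $(D,\mu,z)$, and then conservativity of $\cMon(\E)\to\E$ is invoked to reflect the limit axioms. Your argument instead works upstream in $\Weil$: the structural maps there are rig homomorphisms, hence monoid morphisms for the rig-multiplication monoid on $W$, and you transport this along the strong monoidal Leung functor. Your route is more explicit and self-contained---it does not black-box the monad-morphism fact---at the cost of the concrete identification of the rig multiplication of $W$ with $s\o\<\id_W\x p,p\x\id_W\>$, which you correctly flag as the only non-formal step.

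Two small points. First, the sentence ``since $\Leung[D]$ is strong monoidal, it preserves commutative monoid objects'' is imprecise: preserving \emph{commutativity} requires $\Leung[D]$ to be \emph{symmetric} monoidal, whereas Proposition~\ref{proposition:leung-functor} only gives strong monoidal. This does not actually damage your argument, since the commutativity of $(D,\mu,z)$ is already supplied by Lemma~\ref{lemma:tangentoids-are-monoids} and you only need strong monoidality to transport monoid \emph{morphisms}; but the sentence should be reworded. Second, your Rosick\'y aside is mistaken: $\Weil$ is built from rigs over $\N$, so $W=\N[x]/(x^2)$ carries no negation and there is no map $n$ in $\Weil$ to transport. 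Since the lemma as stated does not include the Rosick\'y case, this does not affect the proof proper.
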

\begin{proof}
For any tangent structure, the tangent structure maps are all monad morphisms. Thus, it follows that the projection $p$, the zero $z$, the sum $s$, the multiplication $l$, and the canonical flip $c$ of $D$ are all in fact monoid morphisms. Furthermore, since the forgetful functor $\cMon(\E)\to\E$ is conservative, that is, reflects isomorphisms, it also reflects limits~\cite{kelly:conservative-functors}. Therefore, we conclude that $((D,\mu,z),p,z,s,l)$ is a (necessarily symmetric) tangentoid in $\cMon(\E)$.
\end{proof}

The operation which sends a symmetric tangentoid in a symmetric monoidal category $\E$ to a tangentoid in $\cMon(\E)$ extends to a functor
\begin{align*}
&M\colon\sTngoid(\E)\to\sTngoid(\cMon(\E))
\end{align*}
which sends a morphism $f\colon D\to D'$ of tangentoids to the morphism $f\colon(D,\mu,z)\to(D',\mu',z')$ of tangentoids of $\cMon(\E)$. In certain cases, this functor is in fact an isomorphism. 

\begin{proposition}
\label{proposition:symmetric-tangentoids}
When the forgetful functor $U\colon\sTngoid(\cMon(\E))\to\sTngoid(\E)$ which forgets the commutative monoid structure preserves limits, the functor $M\colon\sTngoid(\E)\to\sTngoid(\cMon(\E))$ is an isomorphism of categories whose inverse is $U$.
\end{proposition}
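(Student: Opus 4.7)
The proof proceeds by verifying that both composites $U\o M$ and $M\o U$ equal the respective identity functors. On morphisms there is essentially nothing to check: a morphism in $\sTngoid(\E)$ preserves the tangentoid structure and hence automatically preserves the derived monoid structure of Lemma~\ref{lemma:tangentoids-are-monoids}, while a morphism in $\sTngoid(\cMon(\E))$ is by definition a monoid homomorphism preserving the tangentoid structure. On objects, the composite $U\o M=\id$ is immediate from the construction: starting from a symmetric tangentoid $(D,p,z,s,l,\sigma)$ in $\E$, the functor $M$ equips it with the monoid structure $(\mu,z)$ of Lemma~\ref{lemma:tangentoids-are-monoids}, and $U$ simply discards that additional data, returning the original symmetric tangentoid.

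The substance of the lemma is the composite $M\o U=\id$. Let $D$ be a tangentoid in $\cMon(\E)$, with underlying commutative monoid $(D,\mu_D,\eta_D)$ and tangentoid data $(p,z,s,l)$ (with $c=\sigma$ automatic by Proposition~\ref{proposition:symmetric-tangentoid-cocartesian}). Applying $U$ yields the symmetric tangentoid $(D,p,z,s,l,\sigma)$ in $\E$, and $M$ then re-enriches it with the monoid structure $(\mu',z)$, where $\mu'\=s\o\<\id_D\x p,p\x\id_D\>$. The key observation is that in any coCartesian monoidal category every object carries a \emph{unique} commutative monoid structure, namely the codiagonal together with the unique morphism from the initial object; this is forced by the unit axiom combined with the universal property of the coproduct. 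In the coCartesian monoidal category $\cMon(\E)$, the coproduct of commutative monoids is the tensor product in $\E$ endowed with its natural monoid structure, so the codiagonal $D\x_\E D\to D$ underlies $\mu_D$, while the unique morphism $\I\to D$ is $\eta_D$. Thus the canonical commutative monoid structure on $D$ in $\cMon(\E)$ is precisely $(\mu_D,\eta_D)$. On the other hand, applying Lemma~\ref{lemma:tangentoids-are-monoids} inside $\cMon(\E)$ furnishes $(\mu',z)$ as a commutative monoid structure on $D$ in $\cMon(\E)$. By the uniqueness just stated, the two structures coincide, giving $M\o U=\id$.

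The main technical point, and the reason for the limit-preservation hypothesis, is that the formula defining $\mu'$ involves the pullback $D_2$, which must be computed compatibly whether viewed in $\cMon(\E)$ or in $\E$. The assumption that $U$ preserves limits is precisely what identifies the pullback $D_2$ in $\cMon(\E)$ with its counterpart in $\E$, allowing $\mu'$ to be interpreted unambiguously as a morphism in $\cMon(\E)$ and guaranteeing that Lemma~\ref{lemma:tangentoids-are-monoids} applies inside $\cMon(\E)$. Once this compatibility is in place, the uniqueness-of-monoid-structure argument above closes the proof.
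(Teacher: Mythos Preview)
Your proposal is correct and follows essentially the same approach as the paper's proof. Both arguments reduce to showing that the monoid structure $(\mu',z)$ produced by $M$ coincides with the original monoid structure $(\mu_D,\eta_D)$; you phrase this via the uniqueness of commutative monoid structures in the coCartesian monoidal category $\cMon(\E)$, while the paper invokes the Eckmann--Hilton argument directly, but these are two formulations of the same phenomenon.
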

\begin{proof}
Firstly, since, by assumption, the functor $U\colon\sTngoid(\cMon(\E))\to\sTngoid(\E)$ preserves limits, $U$ sends tangentoids to tangentoids. Consider a symmetric tangentoid $D$ of $\E$. $U(M(D))$ coincides with $D$. Now, consider a tangentoid $(D,\mu^\prime,\eta^\prime)$ of $\cMon(\E)$. $M(U(D,+,0))$ comes equipped with the structure of a commutative monoid $(D,\mu,z)$. However, since $(D,\mu^\prime,\eta^\prime)$ is a tangentoid in $\cMon(\E)$ the structural morphisms $p,z,s,l$, and $c$ of $D$ must be monoid morphisms with respect to $\mu^\prime$ and $\eta^\prime$, so in particular, also $\mu$ and $z$ are monoid morphisms for $(D,\mu^\prime,\eta^\prime)$. Thus, using a Eckmann–Hilton argument, $(D,\mu,z)$ and $(D,\mu^\prime,\eta^\prime)$ must coincide as commutative monoids, proving that $(D,+,0)=M(U(D,\mu^\prime,\eta^\prime))$.
\end{proof}

\begin{remark}
\label{remark:U-preservation-limits}
In all cases of interest, the functor $U$ preserves limits. In particular, when $\E$ is cocomplete and $\x$ preserves colimits, $U$ is monadic. Moreover, when $\E$ has finite limits, $U$ creates limits; thus, in particular, it preserves limits.
\end{remark}

Lemma~\ref{lemma:tangentoids-are-monoids} shows that the monoid structure of a tangentoid $D$ on a symmetric monoidal category is always commutative, regardless of $D$ being symmetric. Therefore, one might hope every tangentoid in a symmetric monoidal category is symmetric. In particular, as shown above, this is true for coCartesian monoidal categories. However, despite our lack of a counterexample, we have not found any reason to believe that every tangentoid in a symmetric monoidal category must be symmetric. However, when the canonical flip is a monoid morphism, the tangentoid is symmetric.

\begin{lemma}
\label{lemma:c-monoid-morphism} Let $D$ be a tangentoid in a symmetric monoidal category $(\E,\x,\I, \sigma)$. Then $D$ is symmetric if and only if the canonical flip $c\colon D\x D\to D\x D$ is a monoid morphism. 
\end{lemma}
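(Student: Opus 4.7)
The plan is to treat the two implications separately, with the forward direction being a short diagram chase and the reverse direction being the substantive argument.

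For the forward direction, assume $D$ is symmetric, so that $c=\sigma_{D,D}$. One must verify that $\sigma_{D,D}\colon D\x D\to D\x D$ is a monoid endomorphism for the canonical tensor monoid structure on $D\x D$, with unit $z\x z$ and multiplication $\mu_{D\x D}\=(\mu\x\mu)\o(\id_D\x\sigma_{D,D}\x\id_D)$. Compatibility with the unit follows immediately from naturality of $\sigma$ together with the triviality of $\sigma_{\I,\I}$. Compatibility with $\mu_{D\x D}$ is a straightforward diagram chase: both $\sigma_{D,D}\o\mu_{D\x D}$ and $\mu_{D\x D}\o(\sigma_{D,D}\x\sigma_{D,D})$ unwind, via naturality of the braiding, to the same expression on $(D\x D)\x(D\x D)$.

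For the reverse direction, assume $c$ is a monoid morphism, and introduce the auxiliary morphism $\phi\=\sigma_{D,D}\o c\colon D\x D\to D\x D$. The plan is to prove $\phi=\id_{D\x D}$, which will immediately give $c=\sigma_{D,D}$. Since $\sigma_{D,D}$ and $c$ are both monoid endomorphisms of $D\x D$, so is $\phi$. Combining the tangentoid axiom $c\o(z\x\id_D)=\id_D\x z$ with involutivity $c\o c=\id_{D\x D}$ (which yields $c\o(\id_D\x z)=z\x\id_D$) and with naturality of $\sigma$, one verifies that $\phi$ fixes both canonical inclusions:
\begin{equation*}
\phi\o(z\x\id_D)=z\x\id_D, \qquad \phi\o(\id_D\x z)=\id_D\x z.
\end{equation*}

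The crux is then to exploit the following factorization of the identity, valid for any monoid $(D,\mu,z)$ in any symmetric monoidal category: the composite
\begin{equation*}
D\x D\xrightarrow{\cong}(D\x\I)\x(\I\x D)\xrightarrow{(\id_D\x z)\x(z\x\id_D)}(D\x D)\x(D\x D)\xrightarrow{\mu_{D\x D}}D\x D,
\end{equation*}
built from the right and left unitors, equals $\id_{D\x D}$. Indeed, the middle swap $\id_D\x\sigma_{D,D}\x\id_D$ acts trivially on the inner factor $z\x z$ by naturality, after which the unit axioms for $\mu$ contract the remaining morphism to the identity. Precomposing this identity with $\phi$ and using both that $\phi$ is a monoid morphism and that $\phi$ fixes each of $\id_D\x z$ and $z\x\id_D$, the composite reproduces itself exactly, forcing $\phi=\id_{D\x D}$ and thus $c=\sigma_{D,D}$. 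The main obstacle here is just careful bookkeeping of unitors and the middle-swap; the combinatorial essence is an Eckmann--Hilton-style cancellation enabled by the two unit embeddings being fixed.
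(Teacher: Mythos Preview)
Your proof is correct and follows essentially the same line as the paper's. Both arguments hinge on precomposing the monoid-morphism identity for $c$ with the unit insertion $\id_D\x z\x z\x\id_D$ and then invoking the tangentoid axiom $c\o(z\x\id_D)=\id_D\x z$ (plus involutivity) together with the unit laws for $\mu$; the paper does this computation directly to obtain $c=\sigma$, while you package the same computation by introducing $\phi=\sigma\o c$ and showing it is a monoid endomorphism fixing both inclusions, hence the identity.
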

\begin{proof} For the $\Rightarrow$ direction, for any commutative monoid in a symmetric monoidal category, the symmetry isomorphism $\sigma$ is always a monoid morphism. For $\Leftarrow$ direction, let $D$ be a tangentoid whose canonical flip $c$ is a monoid morphism. In particular, this means that $c$ preserves the multiplication in the sense that the following diagram commutes:
\begin{equation*}
\begin{tikzcd}
{D\x D\x D\x D} & {D\x D\x D\x D} \\
{D\x D\x D\x D} & {D\x D\x D\x D} \\
{D\x D} & {D\x D}
\arrow["{c\x c}", from=1-1, to=1-2]
\arrow["{\id_D\x\sigma\x\id_D}"', from=1-1, to=2-1]
\arrow["{\id_D\x\sigma\x\id_D}", from=1-2, to=2-2]
\arrow["{\mu\x\mu}"', from=2-1, to=3-1]
\arrow["{\mu\x \mu}", from=2-2, to=3-2]
\arrow["c"', from=3-1, to=3-2]
\end{tikzcd}
\end{equation*}
However, precomposing each path of the diagram by $\id_D\x z\x z\x\id_D$, we obtain that the following diagrams commute:
\begin{equation*}
\begin{tikzcd}
{D\x D\x D\x D} & {D\x D\x D\x D} \\
{D\x D} & {D\x D\x D\x D} \\
{D\x D} & {D\x D}
\arrow["{c\x c}", from=1-1, to=1-2]
\arrow["{\id_D\x\sigma\x\id_D}", from=1-2, to=2-2]
\arrow["{\id_D\x z\x z\x\id_D}", from=2-1, to=1-1]
\arrow["{z\x\id_D\x\id_D\x z}"{description}, from=2-1, to=1-2]
\arrow["\sigma"', from=2-1, to=3-1]
\arrow["{\mu\x \mu}", from=2-2, to=3-2]
\arrow["{z\x\id_{D\x D}\x z}"{description}, from=3-1, to=2-2]
\arrow[equals, from=3-1, to=3-2]
\end{tikzcd}\quad\hfill
\begin{tikzcd}[column sep=huge]
{D\x D\x D\x D} & {D\x D} \\
{D\x D\x D\x D} & {D\x D} \\
{D\x D} & {D\x D}
\arrow["{\id_D\x\sigma\x\id_D}"', from=1-1, to=2-1]
\arrow["{\id_D\x z\x z\x\id_D}"', from=1-2, to=1-1]
\arrow[equals, from=1-2, to=2-2]
\arrow["{\mu\x\mu}"', from=2-1, to=3-1]
\arrow["{z\x\id_D\x\id_D\x z}"', from=2-2, to=2-1]
\arrow[equals, from=2-2, to=3-2]
\arrow["c"', from=3-1, to=3-2]
\end{tikzcd}
\end{equation*}
Therefore, $c=\sigma$, and so $D$ is symmetric as desired. 
\end{proof}


\section{Characterizing representable tangent structures for affine schemes}
\label{section:characterization}
The main objective of this paper is to classify representable tangent structures on the category of affine schemes $\Aff_R$. From our previous discussion, to achieve our goal, it suffices to classify infinitesimal objects in $\Aff_R$. Since $\Aff_R$ is equivalent to the opposite category $\cAlg_R$, thanks to Lemma~\ref{lemma:infinitesimal-objects-tangentoids}, infinitesimal objects in $\Aff_R$ coincide with coexponentiable tangentoids in $\cAlg_R$ (with respect to the coCartesian monoidal structure). 

\par Therefore, the first step towards our objective is to classify (non-necessarily coexponentiable) tangentoids in $\cAlg_R$. However, note that $\cAlg_R$ is precisely the category of commutative monoids for the category of $\Mod_R$, that is, $\cMon(\Mod_R)=\cAlg_R$. Thus, by Proposition~\ref{proposition:symmetric-tangentoids}, (symmetric) tangentoids of $\cAlg_R$ are equivalent to symmetric tangentoids of $\Mod_R$. Therefore, we reduce the problem to classifying symmetric tangentoids of $\Mod_R$.

\subsection{Symmetric tangentoids in \texorpdfstring{$R$-modules}{R-modules}}
\label{section:R-modules}
By spelling out the definition, one sees that a symmetric tangentoid in $\Mod_R$ consists of an $R$-module $D$ together with the following data:
\begin{description}
\item[Projection] An $R$-linear morphism $p\colon D\to R$;

\item[Zero morphism] An $R$-linear morphism $z\colon R\to D$;

\item[Sum morphism] An $R$-linear morphism $s\colon D\times_R D\to D$, where $-\times_R-$ represents the pullback of the projection $p$ along itself;

\item[Vertical lift] An $R$-linear morphism $l\colon D\to D\x_RD$;
\end{description}
such that the axioms (which do not involve the canonical flip) in Definition~\ref{definition:tangentoid} hold. In particular, we have that $(p\colon D\to R,z\colon R\to D,s\colon D\times_RD\to D)$ is an additive bundle in $\Mod_R$, that is, a commutative monoid in the slice category $\Mod_R/R$. However, every commutative monoid in $\Mod_R/R$ is of the form $R \oplus M$, for some $R$-module $M$, where the monoid structure corresponds precisely to the additive structure on $M$. Therefore, every additive bundle in $\Mod_R$ is in fact an Abelian group bundle, and thus every tangentoid is Rosick\'y. 

\begin{lemma}
\label{lemma:tangentoids-have-negatives}
If $D$ is a tangentoid of $\Mod_R$, then $D\cong R\+ \ker p$. Furthermore, $D$ is Rosick\'y.
\end{lemma}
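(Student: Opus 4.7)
The plan is to reduce both assertions to the fact that $z$ is a section of $p$, which is immediate from the additive bundle axiom $p \o z = \id_R$. Since $\Mod_R$ is an abelian category, the short exact sequence $0 \to \ker p \to D \to R \to 0$ splits along $z$, producing an $R$-linear isomorphism $D \cong R \+ \ker p$ given by $d \mapsto (p(d),\, d - z(p(d)))$, with inverse $(a,m) \mapsto z(a) + m$. This settles the first claim.

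Next I would analyze the shape of the sum morphism $s$ under this identification. The pullback $D \times_R D$ becomes $R \+ \ker p \+ \ker p$, with elements written as triples $(a, m_1, m_2)$. Since $p \o s = p \o \pi_1$, the first component of $s$ is forced to be $a$, so $s(a, m_1, m_2) = (a, g(a, m_1, m_2))$ for some $R$-linear $g$. The unit axiom $s \o \<z \o p, \id_D\> = \id_D$ combined with the commutativity of $s$ gives $g(a, 0, m) = m$ and $g(a, m, 0) = m$; pairing these constraints with the $R$-linearity of $g$ and evaluating on the three direct summands of $R \+ \ker p \+ \ker p$ pins down $g(a, m_1, m_2) = m_1 + m_2$. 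So $s$ is the evident fiberwise addition.

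For the Rosick\'y structure, I propose the negation $n \colon D \to D$ defined by $n(d) \= 2\, z(p(d)) - d$, which corresponds to $(a, m) \mapsto (a, -m)$ under the decomposition and is manifestly $R$-linear. The equality $p \o n = p$ is immediate, while the negation axiom $s \o \<n, \id_D\> = z \o p$ reduces, via the description of $s$ just obtained, to the identity $(-m) + m = 0$ in $\ker p$.

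I expect no substantive obstacle: the argument uses only the additive bundle portion of the tangentoid data (neither the vertical lift nor the canonical flip plays a role) and is essentially the splitting lemma together with the automatic availability of additive inverses inside an $R$-module. The only point that requires a small amount of bookkeeping is pinning down the precise form of $s$, but this is a routine three-variable $R$-linearity computation once the decomposition $D \cong R \+ \ker p$ is in hand.
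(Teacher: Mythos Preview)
Your proposal is correct and follows essentially the same approach as the paper. The paper's proof is terser: it cites Beck's characterization of commutative monoids in $\Mod_R/R$ to obtain $D\cong R\+\ker p$ (which amounts to exactly the splitting argument you give) and then defines the negation by conjugating $\id_R\+(-)$, which is precisely your map $(a,m)\mapsto(a,-m)$; your extra step of explicitly pinning down $s$ as fibrewise addition is what the Beck citation is implicitly invoking.
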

\begin{proof}
Since $(p,z,s)$ is a commutative monoid in the slice category $\Mod_R/R$, using a similar argument of~\cite[Example~5]{beck:triples-algebras-cohomology}, it follows that $D$ is isomorphic to the $R$-module $R\+ \ker p$. Therefore, one can define $n \colon D \to D$ by conjugating $\id_R\+-\colon R\+ \ker p \to R\+ \ker p$, where $-$ sends $x\in \ker p$ to $-x$, which makes $D$ into a Rosick\'y tangentoid.
\end{proof}

Since we now know that every tangentoid must be of a certain form, the goal is now to characterize the structure on an $R$-module $M$ which makes $D=R\+M$ a symmetric tangentoid in $\Mod_R$. To this end, we may first extract more information on the module associated to a tangentoid. 

\par So, let $D$ be a symmetric tangentoid in $\Mod_R$ and let $M = \ker p$. We may call $M$ the $R$-module \textbf{associated to} the symmetric tangentoid $D$. For simplicity, let us assume that $D = R \+ M$. As such, it follows that the projection corresponds to the projection of the biproduct
\begin{align*}
&R\+M\xrightarrow{\pi_R}R
\end{align*}
the zero morphism corresponds to the injection of the biproduct
\begin{align*}
&R \xrightarrow{\iota_R} R \+ M
\end{align*}
and the sum of the morphism
\begin{align*}
&D_2=R\+(M\oplus M)\xrightarrow{\id_R\++}R\+M
\end{align*}
where $+\colon M\oplus M\to M$ denotes the sum of $M$. Therefore, most of the tangentoid structure is canonically established. Thus, it remains to explain what the vertical lift corresponds to on $M$. It turns out that this corresponds to a \textit{cosemigroup} structure on $M$. Recall that in a (symmetric) monoidal category, a \textbf{(commutative) semigroup} is a pair $(A, \mu)$ consisting of an object $A$ equipped with a morphism $\mu\colon A \otimes A \to A$ which is associative (and commutative). Dually, a \textbf{(cocommutative) cosemigroup} is a pair $(C, \delta)$ consisting of an object $C$ equipped with a morphism $\delta\colon C \to C \otimes C$ which is coassociative (and cocommutative). In $\Mod_R$, (co)commutative (co)semigroups correspond precisely to \textbf{non-(co)unital (co)commutative $R$-(co)algebras}.

In order to costruct a comultiplication on $M$, first recall that by Lemma~\ref{lemma:rosicky-tangentoids}, the following diagram is a triple equalizer in $\Mod_R$:
\begin{equation*}
\begin{tikzcd}
D & {D\x_RD} && D
\arrow["l", from=1-1, to=1-2]
\arrow["{\id_D\x_Rp}", bend left=30, from=1-2, to=1-4]
\arrow["{(z\o p)\x_Rp}"', bend right=30, from=1-2, to=1-4]
\arrow["{p\x_R\id_D}"{description}, from=1-2, to=1-4]
\end{tikzcd}
\end{equation*}
By expanding the tensor product $(R\+M)\otimes(R\+M)$, we find that:
\begin{align*}
&D\x_RD\cong R\+M\+M\+(M\x_RM)
\end{align*}
So, we may define an $R$-linear morphism:
\begin{align*}
&j\colon D(2)\=R\+(M\x_RM)\xrightarrow{}R\+M\+M\+(M\x_RM)=D\x_RD
\end{align*}
which sends $a\in R$ to $a$ and $x\x_Ry\in M\x_RM$ to $x\x_Ry$. From this morphism we can obtain a cocommutative cosemigroup on $M$, or in other words, make $M$ into a non-counital cocommutative $R$-coalgebra. 

\begin{lemma}
\label{lemma:lift}
Let $D$ be a symmetric tangentoid in $\Mod_R$, with $D = R \+ M$. Define the $R$-linear morphism $\nu\colon M\to M\x_RM$ as the following composite:
\begin{align*}
&\nu\colon M\xrightarrow{\iota_M}R\+M=D\xrightarrow{l}D\x_RD=(R\+M)\x_R(R\+ M)\xrightarrow{\pi_M\x_R\pi_M}M\x_RM
\end{align*}
Then $M$ is a cocommutative non-counital $R$-coalgebra with comultiplication $\nu\colon M \to M \otimes M$. Moreover, the following equality holds:
\begin{align*}
&l=j\o(\id_R\+\nu)
\end{align*}
\end{lemma}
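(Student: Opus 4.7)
The plan is to exploit the triple equalizer characterization of $l$ from Lemma~\ref{lemma:rosicky-tangentoids} (which applies since $D$ is Rosick\'y by Lemma~\ref{lemma:tangentoids-have-negatives}) to show that $l$ factors through $l^\sharp$, and then extract $\nu$ by restricting to $M$. First, I would use the decomposition
\begin{align*}
&D\x D\cong R\+M\+M\+(M\x M)
\end{align*}
to compute the three parallel maps $(z\o p)\x p$, $\id_D\x p$, and $p\x\id_D$ from $D\x D$ to $D$: they send $(a,x_1,x_2,w)$ to $(a,0)$, $(a,x_2)$, and $(a,x_1)$ respectively. Their joint equalizer is therefore exactly $R\+(M\x M)$, embedded into $D\x D$ via $l^\sharp$, so by Lemma~\ref{lemma:rosicky-tangentoids}, $l$ factors uniquely as $l=l^\sharp\o l^\flat$ for some $R$-linear $l^\flat\colon D\to R\+(M\x M)$.

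Next, I would identify $l^\flat$ with $\id_R\+\nu$ by examining its restriction to each biproduct summand of $D=R\+M$. For the $R$-summand, the second diagram of axiom~(ii) gives $l\o z=(\id_D\x z)\o z$, and the element $z(1)\x z(1)$ has no nonzero component outside the $R$-summand of the decomposition, forcing $l^\flat\o z$ to be the canonical inclusion. For the $M$-summand, the first diagram of axiom~(ii) gives $(\id_D\x p)\o l=z\o p$; composing with $\iota_M\colon M\to D$ (where $p\o\iota_M=0$) kills the $R$-component of $l^\flat\o\iota_M$, so this map factors through $M\x M\subset R\+(M\x M)$. The induced map $M\to M\x M$ coincides with $\nu$ by definition (projecting $l\o\iota_M$ via $\pi_M\x\pi_M$ picks out exactly this component), and assembling both summands yields $l=l^\sharp\o(\id_R\+\nu)$.

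Finally, I would deduce the coalgebra axioms on $\nu$ by restricting the relevant tangentoid axioms to the $M$-summand. Coassociativity comes from the pentagon axiom in~(iv): iterating $l\o\iota_M=\iota_{M\x M}\o\nu$ shows that both $(l\x\id_D)\o l\o\iota_M$ and $(\id_D\x l)\o l\o\iota_M$ land inside $M\x M\x M\subset D\x D\x D$ and correspond to $(\nu\x\id_M)\o\nu$ and $(\id_M\x\nu)\o\nu$ respectively, so these two composites are equal. Cocommutativity comes from the first diagram of axiom~(v), $c\o l=l$, combined with the symmetry of $D$ which forces $c=\sigma_{D,D}$; restricting to the $M\x M$-summand yields $\sigma_{M,M}\o\nu=\nu$. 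The main technical obstacle is the bookkeeping in the first step, namely carefully verifying the actions of the three parallel maps on the four-way direct-sum decomposition of $D\x D$; once that is in place, the rest follows directly from the universal properties of the equalizer and the biproduct.
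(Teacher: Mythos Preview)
Your proof is correct and follows essentially the same route as the paper's: both arguments use the triple-equalizer property of $l$ to show that the image of $l$ lands in the summand $R\+(M\x M)\subset D\x D$, then identify the induced map on each summand of $D=R\+M$ via the axioms of~(ii), and finally read off coassociativity and cocommutativity from axioms~(iv) and~(v). The only difference is presentational---the paper carries this out by expanding $l(a+x)$ as a sum of elementary tensors and killing the cross terms by hand, whereas you compute the joint equalizer abstractly as $l^\sharp$ and invoke its universal property to obtain the factorization $l=l^\sharp\o l^\flat$---but the underlying ideas coincide.
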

\begin{proof}
For the sake of simplicity, for each $a,b\in R$ and $x,y\in M$ we shall denote by $ab$, $ax$, $by$, and $xy$ the elements $a\x_Rb$, $x\x_Ra$, $b\x_Ry$, and $x\x_Ry$ of $D\x_RD\cong R\+M\+M\+(M\x_RM)$, respectively. Using this notation, by linearity, we can write the action of the vertical lift $l$ as follows:
\begin{align*}
&l(a+x)=\sum_i(a_i+x_i)\x_R(b_i+y_i)=\sum_ia_ib_i+a_iy_i+b_ix_i+x_iy_i
\end{align*}
However, since $l$ triple equalizes $\id_D\x_Rp$, $p\x_R\id_D$, and $p\o z\x_Rp$, the terms $a_iy_i$ and $b_ix_i$ vanish. Therefore, we can rewrite $l$ as follows:
\begin{align*}
&l(a+x)=\sum_ia_ib_i+x_iy_i
\end{align*}
Thanks to the additivity axiom of the vertical lift, we get that
\begin{align*}
&l(a)=l(z(a))=(\id_D\x_Rz)(z(a))=a
\end{align*}
where we use that $a+0\in D$ for each $a\in R$. Thus, $l(a)=a$ for every $a\in R$. The next step is to prove that $l(x)$ belongs to $M\x_RM$ for every $x\in M$. From the zero axiom, we have that:
\begin{align*}
&(p\x_R\id_D)(l(x))=p(z(x))=0
\end{align*}
However, we also have that
\begin{align*}
&(p\x_R\id_D)(l(x))=(p\x_R\id_D)\left(B+\sum_ix_iy_i\right)=B
\end{align*}
for some $B\in R$ and $x_i,y_i\in M$. Thus, $B=0$, and so:
\begin{align*}
&l(x)=\sum_ix_iy_i
\end{align*}
Now we can also compute that
\begin{align*}
&j((\id_R\+\nu)(a+x))=j(a+\nu(x))=j\left(a+\sum_ix_iy_i\right)=a+\sum_ix_iy_i=l(a+x)
\end{align*}
where $\nu(x)$ is $\sum_ix_iy_i$. So $l=j\o(\id_R\+\nu)$ as desired. Finally, cocommutativity and coassociativity of $\nu$ are a direct consequence of the cocommutativity and coassociativity axioms of $l$.
\end{proof}

The universality of the vertical lift of $D$ implies the existence of a morphism $\alpha\colon M\x_RM\to M$, which defines an inverse of the comultiplication $\nu\colon M\to M\x_RM$. As such, this makes $M$ into a \textbf{solid} commutative non-unital $R$-algebra. Recall that in a monoidal category $(\E,\x,\I)$, a \textbf{solid} (commutative) semigroup is a (commutative) semigroup $(A,\mu)$ whose multiplication morphism $\mu\colon A \x A \to A$ is an isomorphism~\cite{gutierrez2015solid}.

\par Dually, a \textbf{solid} (cocommutative) cosemigroup is a (cocommutative) cosemigroup $(C, \delta)$ whose comultiplication morphism $\delta\colon C\to C\x C$ is an isomorphism. Of course, if $(A, \mu)$ is a solid (commutative) semigroup, then $(A, \mu^{-1})$ is a solid (cocommutative) cosemigroup, and vice-versa. Thus, solid (commutative) semigroups and solid (cocommutative) cosemigroups are essentially the same. In the context of the monoidal category $\Mod_R$ of $R$-modules, solid (commutative) semigroups correpond precisely to \textbf{solid} (\textbf{commutative}) \textbf{non-unital algebras}.

\par The full subcategory of the category of commutative semigroups on a symmetric monoidal category $(\E,\x,\I)$ spanned by commutative solid semigroups is denoted by $\cSolid(\E)$. In particular, a morphism between two commutative solid semigroups is just a morphism of semigroups. For more on solid semigroups, we invite the interested reader to see~\cite{gutierrez2015solid}. Here are some examples of commutative solid semigroups. 

\begin{example}
\label{example:solid-non-unital-algebra-unit}
In every symmetric monoidal category $(\E,\x,\I,\sigma)$, the unit $\I$ is trivally a commutative solid semigroup.
\end{example}

\begin{example}
\label{example:solid-non-unital-algebra-Q}
In the symmetric monoidal category $\Z$-modules, that is, Abelian groups, the ring of rational numbers $\Q$ is a commutative solid semigroup.
\end{example}

To prove that the $R$-module $M$ associated with a symmetric tangentoid $D$ of $\Mod_R$ is in fact a solid non-unital $R$-algebra, let us first define the multiplication $\alpha\colon M\x_RM\to M$ by employing the universal property of the lift $l$ of $D$. So consider the morphism $j$ and notice that $j$ triple equalizes $\id_D\x_Rp$, $p\x_R\id_D$, and $p\o z\x_Rp$. Therefore, by the universal property of $l$, we obtain a unique morphism $\hat j\colon D(2)\to D$ such that:
\begin{equation*}
\begin{tikzcd}
D & {D\x_RD} && D \\
{D(2)}
\arrow["l", from=1-1, to=1-2]
\arrow["{\id_D\x_Rp}", bend left=30, from=1-2, to=1-4]
\arrow["{(z\o p)\x_Rp}"', bend right=30, from=1-2, to=1-4]
\arrow["{p\x_R\id_D}"{description}, from=1-2, to=1-4]
\arrow["{\hat j}", dashed, from=2-1, to=1-1]
\arrow["{j}"', from=2-1, to=1-2]
\end{tikzcd}
\end{equation*}

 \begin{proposition}
\label{proposition:tangentoids-are-solid-commutative-non-unital-algebras}
Let $D$ be a symmetric tangentoid in $\Mod_R$, with $D = R \oplus M$. Define the $R$-linear morphism $\alpha\colon M\x_RM\to M$ as the following composite:
\begin{align*}
&\alpha\colon M\x_RM\xrightarrow{\iota_{M\x_RM}}R\+ (M\x_RM)=D(2)\xrightarrow{\hat j}D=R\+M\xrightarrow{\pi_M}M
\end{align*}
Then $M$ is a commutative solid non-unital $R$-algebra with multiplication $\alpha\colon M\x_RM\to M$ whose inverse is $\alpha^{-1} = \nu$ of Lemma~\ref{lemma:lift}. Moreover, the following equality holds:
\begin{align*}
&\hat j=\id_R\+\alpha
\end{align*}
\end{proposition}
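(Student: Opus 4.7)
The plan is to use the universal property of the vertical lift to force $\nu$ to be an isomorphism, and then identify $\alpha$ with its inverse. By Lemma~\ref{lemma:tangentoids-have-negatives}, $D$ is automatically Rosick\'y, so by Lemma~\ref{lemma:rosicky-tangentoids}, $l\colon D\to D\x D$ realizes $D$ as the triple equalizer of $\id_D\x p$, $p\x\id_D$, and $z\o(p\x p)$. A direct computation in the decomposition $D\x D\cong R\+M\+M\+(M\x M)$ shows that these three maps send a general element $(r,x,y,t)$ to $(r,x)$, $(r,y)$, and $(r,0)$ respectively, so their common equalizer is precisely $R\+(M\x M)=D(2)$ with the canonical inclusion playing the role of the equalizing map. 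This inclusion is exactly $l^\sharp$, and the universal property of $l$ then produces the unique $R$-linear $l^\o\colon D(2)\to D$ satisfying $l\o l^\o=l^\sharp$.

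Next, I would extract $\alpha$ by analyzing $l\o l^\o=l^\sharp$ componentwise. Lemma~\ref{lemma:lift} gives the formula $l(a,m)=(a,0,0,\nu(m))$ in the above decomposition, while $l^\sharp(a,t)=(a,0,0,t)$. Writing $l^\o(a,t)=(b,m)$, the equation $l(b,m)=l^\sharp(a,t)$ forces $b=a$ and $\nu(m)=t$. Existence of $l^\o$ for every input therefore forces $\nu$ to be surjective, and uniqueness of $l^\o$ forces $\nu$ to be injective. Hence $\nu$ is an $R$-linear isomorphism, and $l^\o(a,t)=(a,\nu^{-1}(t))$, which is exactly $\id_R\+\nu^{-1}$. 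The formula for $\alpha$ in the statement then simplifies to $\alpha(t)=\pi_M(l^\o(0,t))=\nu^{-1}(t)$, so $\alpha=\nu^{-1}$ and $l^\o=\id_R\+\alpha$ as claimed.

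Finally, since $\nu$ is cocommutative and coassociative by Lemma~\ref{lemma:lift}, its inverse $\alpha$ is automatically commutative and associative, so $(M,\alpha)$ is a commutative non-unital $R$-algebra whose multiplication is an isomorphism, i.e., a commutative solid non-unital $R$-algebra with $\alpha^{-1}=\nu$. The main conceptual point (and the only real obstacle) is recognizing that the universality of the vertical lift is strong enough to simultaneously pin down $l^\o$ \emph{and} force $\nu$ to be bijective: the existence and uniqueness clauses of the universal property translate term-for-term into the surjectivity and injectivity of $\nu$. Once this observation is in place, the remaining verifications reduce to routine linear algebra in $\Mod_R$.
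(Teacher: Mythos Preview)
Your proposal is correct and follows essentially the same route as the paper: both exploit the triple-equalizer characterization of $l$ together with the factorization $l=l^\sharp\circ(\id_R\oplus\nu)$ from Lemma~\ref{lemma:lift}, and then use the universal property to establish that $\alpha$ and $\nu$ are mutual inverses, the only difference being organizational (you first prove $\nu$ is bijective and set $\alpha=\nu^{-1}$, whereas the paper first shows $l^\o=\id_R\oplus\alpha$ and then verifies the two inverse identities separately). One minor imprecision worth tightening: the injectivity of $\nu$ really comes from $l$ being monic (as any equalizer is), not from the uniqueness of the \emph{map} $l^\o$ as such---alternatively, since you have already observed that $(D(2),l^\sharp)$ is itself the triple equalizer, you could conclude directly that the comparison map $l^\o$ is an isomorphism by uniqueness of equalizers, which immediately yields invertibility of $\id_R\oplus\nu$ and hence of $\nu$.
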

\begin{proof}
As we already know that $M$ is a cocommutative non-counital coalgebra, it suffices to show that $\alpha^{-1} = \nu$, since commutativity and associativity of $\alpha$ come directly from the cocommutativity and coassociativity of $\nu$. To this end, we may first show that $\hat j=\id_R\+\alpha$. Observe that, or any $a \in R$, the following equality holds:
\begin{align*}
&l(\hat j(a))=j(a)=a = l(a)
\end{align*}
Since $l$ is monic, we get that $\hat j(a)=a$. Now, for any $x \otimes y \in M \otimes M$, let
\begin{align*}
&\hat j(x\x_Ry)=b+m
\end{align*}
for some $b\in R$ and $m\in M$. On the one hand we may compute:
\begin{align*}
&p(\hat j(x\x_Ry))=p(b+m)=b
\end{align*}
On the other hand, we may also compute
\begin{align*}
&p(\hat j(x\x_Ry))=((z\o p)\x_Rp)(j(x\x_Ry))=((z\o p)\x_Rp)(x\x_Ry)=0
\end{align*}
So, in particular, $b = 0$. Thus, $l^\circ(x\x_Ry)$ belongs to $M$, and so we get that:
\begin{align*}
&\hat j=\id_R\+\alpha
\end{align*}
as desired. Let us now show that $\nu\o\alpha=\id_{M\x_RM}$. By definition of $\hat j$:
\begin{align*}
&l\o \hat j=j
\end{align*}
However, $\hat j=\id_R\+\alpha$ and $l=j \o (\id_R\+\nu)$, thus:
\begin{align*}
&j \o (\id_R\+\nu) \o (\id_R\+\alpha) =j
\end{align*}
Since $j$ is monic, we get that:
\begin{align*}
&(\id_R\+\nu)\o(\id_R\+\alpha)=\id_{D(2)}
\end{align*}
So, in particular, $\nu\o\alpha=\id_{M\x_RM}$. Conversely, to show that $\alpha\o\nu = \id_M$, first consider the diagram:
\begin{equation*}
\begin{tikzcd}
D & {D\x_RD} && D \\
{D(2)} & D
\arrow["l", from=1-1, to=1-2]
\arrow["{\id_D\x_Rp}", bend left=30, from=1-2, to=1-4]
\arrow["{(z\o p)\x_Rp}"', bend right=30, from=1-2, to=1-4]
\arrow["{p\x_R\id_D}"{description}, from=1-2, to=1-4]
\arrow["{\id_R\+\alpha=\hat j}", from=2-1, to=1-1]
\arrow["{j}"{description}, from=2-1, to=1-2]
\arrow["l"', from=2-2, to=1-2]
\arrow["{\id_R\+\nu}", from=2-2, to=2-1]
\end{tikzcd}
\end{equation*}
From the universal property of $l$:
\begin{align*}
&(\id_R\+\alpha)\o(\id_R\+\nu)=\id_D
\end{align*}
which implies that $\alpha\o\nu=\id_M$. Therefore, we conclude that $M$ is a (co)commutative solid non-(co)unital (co)algebra as desired.
\end{proof}

We have now shown that if $D$ is a symmetric tangentoid in $\Mod_R$, the $R$-module $M$ associated with $D\cong R\+M$ comes equipped with the structure of a solid non-unital $R$-algebra. It is natural to wonder if the converse holds. For starters, we need a technical lemma.

\begin{lemma}
\label{lemma:preservation-pullbacks}
If $M$ is an $R$-module and $D\=R\+M$, the functors $D^n\x_R-$ preserve each $m$-fold pullback $D_m$ of the projection $p$ along itself, where $D^n$ denotes the $n$-fold tensor of $D$ with itself.
\end{lemma}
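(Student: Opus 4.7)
The plan is to exploit the fact that the projection $p\colon D\to R$ is a split epimorphism, with section given by the canonical inclusion $z\=\iota_R\colon R\to R\+M=D$. In the additive category $\Mod_R$, any split epi $\pi\colon A\to B$ with section $\sigma$ and kernel $K$ yields a canonical direct sum decomposition $A\cong B\+K$, and its $m$-fold pullback along itself is canonically isomorphic to $B\+K^{\+ m}$, via the map sending $(b,n_1,\dots,n_m)$ to the tuple whose $i$-th entry is $(b,n_i)$. Applying this observation to $p\colon D\to R$ with kernel $M$, we immediately obtain $D_m\cong R\+M^{\+ m}$.

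Next, I would apply the same reasoning to the morphism $\id_{D^n}\x p\colon D^n\x D\to D^n\x R\cong D^n$, which is itself a split epimorphism in $\Mod_R$ with section $\id_{D^n}\x z$ and kernel $D^n\x M$ (using that the functor $D^n\x-$ is additive, being a left adjoint and therefore preserving the split short exact sequence $0\to M\to D\to R\to 0$). By the general fact recalled above, the $m$-fold pullback of $\id_{D^n}\x p$ in $\Mod_R$ is canonically isomorphic to $D^n\+(D^n\x M)^{\+ m}$.

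On the other hand, since $D^n\x-$ distributes over finite direct sums, one computes
\begin{align*}
&D^n\x D_m\cong D^n\x(R\+M^{\+ m})\cong D^n\+(D^n\x M)^{\+ m},
\end{align*}
which matches the formula of the previous step. The only remaining task is to check that the canonical comparison morphism from $D^n\x D_m$ into the pullback of $\id_{D^n}\x p$ along itself (induced by the universal property applied to the morphisms $\id_{D^n}\x\pi_i\colon D^n\x D_m\to D^n\x D$) agrees, under these identifications, with the identity of $D^n\+(D^n\x M)^{\+ m}$. Both maps are uniquely determined by their composites with the projections of the pullback, and on each factor they act as $\id_{D^n}\x\pi_i$, so the equality follows from a direct inspection of the direct sum decompositions. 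The only subtle step is this bookkeeping: tracking the splittings and the canonical inclusions carefully so as to identify the comparison morphism with an isomorphism, rather than merely producing an abstract isomorphism between the two objects.
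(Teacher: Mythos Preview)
Your proposal is correct and follows essentially the same idea as the paper's proof: both arguments boil down to the observation that tensoring distributes over finite biproducts in $\Mod_R$, so that $D^n\x(R\+M^{\+m})\cong D^n\+(D^n\x M)^{\+m}$ coincides with the $m$-fold pullback of $\id_{D^n}\x p$. The paper states this in a single sentence (for an arbitrary module $N$ in place of $D^n$), while you unpack it more carefully via the split-epi structure and explicitly verify that the comparison map is an isomorphism rather than merely exhibiting an abstract isomorphism of the two objects; this extra care is a genuine improvement in rigor. One minor remark: you invoke that $D^n\x-$ is a left adjoint to justify preservation of the split short exact sequence, but this is unnecessary, as split exact sequences are preserved by \emph{any} additive functor.
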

\begin{proof}
For an arbitrary $R$-module $N$, the diagram
\begin{equation*}
\begin{tikzcd}
{N\x_R(R\+M\+\dots\+M)} & {N\x_R(R\+M)} \\
{N\x_R(R\+M)} & N
\arrow["{N\x_R\pi_m}", from=1-1, to=1-2]
\arrow["{N\x_R\pi_1}"', from=1-1, to=2-1]
\arrow["{N\x_Rp}", from=1-2, to=2-2]
\arrow["{N\x_Rp}"', from=2-1, to=2-2]
\end{tikzcd}
\end{equation*}
is a pullback diagram once one distributes biproducts over the tensor products. Therefore, by choosing $N=D^n$ we get that $D^n$ preserves the $n$-fold pullbacks of $p$ along itself as desired.
\end{proof}

Thanks to Lemma~\ref{lemma:preservation-pullbacks}, we can prove the converse of Proposition~\ref{proposition:tangentoids-are-solid-commutative-non-unital-algebras}, that is, that commutative solid non-unital $R$-algebras define symmetric tangentoids in $\Mod_R$.

\begin{proposition}
\label{proposition:solid-non-unital-algebras-are-tangentoids}
If $M$ is a commutative solid non-unital $R$-algebra, $D\=R\+M$ is a symmetric tangentoid in $\Mod_R$.
\end{proposition}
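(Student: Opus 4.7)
My plan is to define all the structure morphisms on $D\=R\+M$ in a manner dual to the analysis in Lemma~\ref{lemma:lift} and Proposition~\ref{proposition:tangentoids-are-solid-commutative-non-unital-algebras}, and then verify the tangentoid axioms, leveraging Proposition~\ref{proposition:simplify-symmetric-tangentoids} to make the canonical flip automatic. Explicitly, let $p\=\pi_R\colon R\+M\to R$ be the first biproduct projection, $z\=\iota_R\colon R\to R\+M$ the first injection, and let the $n$-fold pullback be $D_n\cong R\+\underbrace{M\+\dots\+M}_{n}$ with sum morphism $s\=\id_R\+{+}\colon R\+M\+M\to R\+M$. Define the canonical flip to be the symmetry $c\=\sigma_{D,D}$. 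For the vertical lift, using $\alpha^{-1}\colon M\to M\x M$ (which exists because $\alpha$ is a solid multiplication, hence an isomorphism), set
\begin{align*}
&l\=l^\sharp\o(\id_R\+\alpha^{-1})\colon R\+M\xrightarrow{\id_R\+\alpha^{-1}}R\+(M\x M)\xrightarrow{l^\sharp}D\x D
\end{align*}
where $l^\sharp$ is the canonical inclusion of $R\+(M\x M)$ into $D\x D\cong R\+M\+M\+(M\x M)$.

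First, I would verify that $(p,z,s)$ is a $(D\x-)$-additive bundle: the additive bundle axioms reduce to standard biproduct identities, and the preservation of the $n$-fold pullbacks by the functors $D^n\x-$ is exactly the content of Lemma~\ref{lemma:preservation-pullbacks}. Next, by Proposition~\ref{proposition:simplify-symmetric-tangentoids}, every axiom of a tangentoid involving $c$ holds automatically since we chose $c=\sigma$. So the remaining work is to check the vertical lift axioms (ii), the pentagon (the $l$-part of (iv)), the compatibility (the $l$-part of (v), which is compatibility with $c=\sigma$ and follows from cocommutativity of $\alpha^{-1}$), and the universality (vi).

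For axiom (ii), that $(z,l)$ is an additive bundle morphism: commutativity of the first two squares is an immediate calculation from the biproduct decomposition, since $l$ is zero on the $R$-component contribution from the non-trivial factor. Commutativity of the third square (compatibility of $l$ with $s$) reduces to the $R$-linearity of $\alpha^{-1}$, since $s$ adds on the $M$-components and $l$ distributes over this sum. For the pentagon axiom of (iv), unravelling via $l=l^\sharp\o(\id_R\+\alpha^{-1})$ shows that both composites equal $(l^\sharp\x\id_D)\o(\id_R\+(\alpha^{-1}\x\id_M)\o\alpha^{-1})$ (up to the obvious inclusion), and coassociativity of $\alpha^{-1}$ — which is dual to associativity of $\alpha$ — closes the diagram. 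The $c$-compatibility reduces to cocommutativity of $\alpha^{-1}$.

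The main obstacle is axiom (vi), the universality of the vertical lift. Since $\Mod_R$ has negatives, we use Lemma~\ref{lemma:rosicky-tangentoids}: universality is equivalent to the diagram
\begin{equation*}

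\begin{tikzcd}
D & {D\x D} && D
\arrow["l", from=1-1, to=1-2]
\arrow["{\id_D\x p}", bend left=30, from=1-2, to=1-4]
\arrow["{(z\o p)\x p}"', bend right=30, from=1-2, to=1-4]
\arrow["{p\x\id_D}"{description}, from=1-2, to=1-4]
\end{tikzcd}
\end{equation*}
being a triple equalizer. However, the equalizer of these three parallel maps in $\Mod_R$ is easily computed from the biproduct decomposition $D\x D\cong R\+M\+M\+(M\x M)$: the equations $\pi_1=\pi_2$ and $\pi_1=z\o p\o\pi_1$ kill the two middle $M$-summands, leaving exactly $R\+(M\x M)$ with the inclusion $l^\sharp$ as the universal equalizing map. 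Since $l=l^\sharp\o(\id_R\+\alpha^{-1})$ and $\alpha^{-1}$ is an isomorphism, $l$ itself is a triple equalizer. This establishes (vi), completing the proof that $D$ is a symmetric tangentoid.
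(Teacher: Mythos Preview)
Your proposal is correct and follows the paper's setup verbatim: you use the same projection, zero, sum, and the same vertical lift $l=l^\sharp\o(\id_R\+\alpha^{-1})$, and you invoke Lemma~\ref{lemma:preservation-pullbacks} for the pullback preservation just as the paper does. The one genuine difference is in how you handle the universality of the vertical lift. The paper argues directly: it builds a retraction $l^\flat\colon D\x D\to D(2)$, defines the factorization $h\=(\id_R\+\alpha)\o l^\flat\o f$ for any triple-equalizing $f$, and checks existence and uniqueness by hand. You instead invoke Lemma~\ref{lemma:rosicky-tangentoids} (legitimately, since $D$ carries the obvious negation $\id_R\+(-\id_M)$ and the other axioms have already been checked) to reduce universality to the assertion that $l$ is a triple equalizer, and then you read that equalizer off from the biproduct decomposition $D\x D\cong R\+M\+M\+(M\x M)$. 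Your route is shorter and more conceptual; the paper's route is more self-contained in that it does not rely on the Rosick\'y simplification. One small remark: your shorthand ``$\pi_1=\pi_2$ and $\pi_1=z\o p\o\pi_1$'' for the equalizer computation is imprecise (the three parallel maps are $\id_D\x p$, $p\x\id_D$, and $(z\o p)\x p$, not literal projections), but the underlying computation that the two middle $M$-summands are killed is correct.
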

\begin{proof}
The $R$-module structure of $M$ induces a projection $p\colon D\to R$, a zero morphism $z\colon R \to D$, a sum morphism $s\colon D_2\to D$, and a negation $n\colon D\to D$. Furthermore, thanks to Lemma~\ref{lemma:preservation-pullbacks}, the functors $D^n\x_R-$ preserve all $n$-fold pullbacks of $p$ along itself. The next step is to define the lift. Since $M$ is a solid non-unital $R$-algebra, $M$ comes equipped with an invertible morphism $\alpha\colon M\x_RM\to M$. Let $\nu$ denote the inverse of $\alpha$ and let us define:
\begin{align*}
&l\colon D\xrightarrow{\id_R\+\nu}D(2)\xrightarrow{j}D\x_RD
\end{align*}
where recall that $D(2) = R \+ (M \otimes M)$ and $j( r + (x \otimes y) ) = r + (x \otimes y)$. We now show that $l$ satisfies the necessary diagrams. Throughout this proof we represent:
\begin{equation*}
\nu(x)=\sum_ix_i\x_Ry_i\qquad
\nu(y)=\sum_jz_j\x_Rt_j
\end{equation*}
It is straightforward to see that $l$ satisfies the zero rule:
\begin{align*}
&(z\x_R\id_D)(l(a+x))=(z\x_R\id_D)(j((\id_\R\+\nu)(a+x)))=(z\x_R\id_D)\left(j\left(a+\left(\sum_ix_i\x_Ry_i\right)\right)\right)\\
&\quad=(z\x_R\id_D)\left(a+\sum_ix_i\x_Ry_i\right)=a=z(a+x)
\end{align*}
Since $l$ is a additive bundle morphism, we first compute that
\begin{align*}
& l(s(a+x_1+y_2))=l(a+(x+y))=l(a+x)+l(y)=a+\sum_ix_i\x_Ry_i+\sum_jz_j\x_Rt_j 
\end{align*}
while, we can also compute that:
\begin{align*}
&(\id_D\x_Rs)\left(\theta\left((l\times_Rl)\left(a+(x,0)+(0,y)\right)\right)\right)=(\id_D\x_Rs)\left(\theta\left(a+\sum_i(x_i,0)\x_R(y_i,0)+\sum_j(0,z_j)\x_R(0,t_j)\right)\right)\\
&\quad=(\id_D\x_Rs)\left(a+\left(\sum_ix_i\x_Ry_i,0\right)+\left(\sum_j0,z_j\x_Rt_j\right)\right)=a+\sum_ix_i\x_Ry_i+\sum_jz_j\x_Rt_j
\end{align*}
To conclude the proof, we need to show that $l$ is universal. Let us consider a morphism $f\colon A\to D\x_RD$ which triple equalizes $\id_D\x_Rp$, $p\x_R\id_D$, and $p\o z \x_Rp$. We want to find a morphism $h\colon A\to D$ making the following diagram commutative:
\begin{equation*}
\begin{tikzcd}
D & {D(2)} & {D\x_RD} && D \\
A
\arrow["{\id_R\+\nu}", from=1-1, to=1-2]
\arrow["{j}", from=1-2, to=1-3]
\arrow["{\id_D\x_Rp}", bend left, from=1-3, to=1-5]
\arrow["{p\o z\x_Rp}"', bend right, from=1-3, to=1-5]
\arrow["{p\x_R\id_D}", from=1-3, to=1-5]
\arrow["h", dashed, from=2-1, to=1-1]
\arrow["f"', from=2-1, to=1-3]
\end{tikzcd}
\end{equation*}
Since $f$ equalizes the three morphisms, $f(a)$ must be of the form:
\begin{align*}
&f(a)=A+\sum_ix_i\x_Ry_i
\end{align*}
Let us define the following morphism:
\begin{align*}
&l^\flat\colon D\x_RD\to D(2)\\
&l^\flat\left(\sum_i(a_i+x_i)\x_R(b_i+y_i)\right)\=\sum_ia_ib_i+x_i\x_Ry_i
\end{align*}
Then define $h$ as follows:
\begin{align*}
&h\colon C\xrightarrow{f}D\x_RD\xrightarrow{l^\flat}D(2)=R\+(M\x_RM)\xrightarrow{\id_R\+\alpha}D
\end{align*}
We now show that $l \o h=f$
\begin{align*}
&l(h(a))=j((\id_R\+\nu)((\id_R\+\alpha)(l^\flat(f(a)))))=j(l^\flat(f(a)))=j\left(l^\flat\left(A+\sum_ix_i\x_Ry_i\right)\right)\\
&\quad=j\left(A+\sum_ix_i\x_Ry_i\right)=A+\sum_ix_i\x_Ry_i=f(a)
\end{align*}
where we used that $\alpha$ is the inverse of $\nu$. For uniqueness of $h$, consider another morphism $h'\colon A\to D$ such that $l\o h'=f$. Then we can compute that:
\begin{align*}
&h=(\id_R\+\alpha)\o l^\flat\o f=(\id_R\+\alpha)\o\l^\flat\o l\o h'=(\id_R\+\alpha)\o l^\flat\o j\o(\id_R\+\nu)\o h'
\end{align*}
However, $l^\flat\o j=\id_{D(2)}$, and thus
\begin{align*}
&h=(\id_R\+\alpha)\o(\id_R\+\nu)\o h'=h'
\end{align*}
proving that $h$ is the unique morphism making the diagram commutative. So we conclude that $D = R \+ M$ is a symmetric tangentoid. 
\end{proof}

Propositions~\ref{proposition:tangentoids-are-solid-commutative-non-unital-algebras} and~\ref{proposition:solid-non-unital-algebras-are-tangentoids} provide an equivalence between symmetric tangentoids and commutative solid non-unital $R$-algebras.

\begin{theorem}
\label{theorem:characterization-tangentoids-Mod-R}
Symmetric tangentoids in $\Mod_R$ are (up to unique isomorphism) $R$-modules $R\+M$ where $M$ is a commutative solid non-unital $R$-algebra $M$ in $\Mod_R$. 
\end{theorem}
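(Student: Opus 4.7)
The plan is to assemble Theorem~\ref{theorem:characterization-tangentoids-Mod-R} from the three preceding results already in hand: Lemma~\ref{lemma:tangentoids-have-negatives}, which forces the underlying module shape, Proposition~\ref{proposition:tangentoids-are-solid-commutative-non-unital-algebras}, which extracts a commutative solid non-unital algebra from a symmetric tangentoid, and Proposition~\ref{proposition:solid-non-unital-algebras-are-tangentoids}, which provides the converse construction. So essentially no new calculation is required; the work is showing that the two constructions are mutually inverse on objects up to a uniquely determined isomorphism.

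First, I would take a symmetric tangentoid $D$ in $\Mod_R$ and use Lemma~\ref{lemma:tangentoids-have-negatives} to produce a specific isomorphism $D \cong R \+ M$, where $M \= \ker p$. Under this identification, the projection, zero morphism, and sum coincide with the canonical biproduct data, and by Lemma~\ref{lemma:lift} the vertical lift factors as $l = l^\sharp \o (\id_R \+ \nu)$ for a unique cocommutative non-counital coalgebra structure $\nu\colon M \to M \x M$. Proposition~\ref{proposition:tangentoids-are-solid-commutative-non-unital-algebras} then upgrades this $\nu$ to an isomorphism whose inverse $\alpha\colon M \x M \to M$ makes $M$ into a commutative solid non-unital $R$-algebra.

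Conversely, starting from a commutative solid non-unital $R$-algebra $M$ with multiplication $\alpha$, Proposition~\ref{proposition:solid-non-unital-algebras-are-tangentoids} equips $D \= R \+ M$ with a canonical symmetric tangentoid structure, using $\nu \= \alpha^{-1}$ to define the vertical lift by $l = l^\sharp \o (\id_R \+ \nu)$. I would then check both composites: starting from $M$, building $R \+ M$, and taking the kernel of the first projection recovers $M$ with the same multiplication (since the construction of $\alpha$ from $l$ in Proposition~\ref{proposition:tangentoids-are-solid-commutative-non-unital-algebras} uses exactly the formula $l^\o = \id_R \+ \alpha$); starting from a tangentoid $D$, extracting $M$, and rebuilding the tangentoid on $R \+ M$ yields a tangentoid whose biproduct data matches via Lemma~\ref{lemma:tangentoids-have-negatives} and whose vertical lift matches by the decomposition $l = l^\sharp \o (\id_R \+ \nu)$.

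The main obstacle, such as it is, is justifying \emph{uniqueness} of the isomorphism. This reduces to observing that once $D$ is presented as $R \+ M$, the additive bundle data $(p,z,s)$ is forced by the biproduct structure (as in the proof of Lemma~\ref{lemma:tangentoids-have-negatives}), and the vertical lift is forced by the explicit formula of Lemma~\ref{lemma:lift} in terms of $\alpha^{-1}$; hence any two tangentoid structures on $R \+ M$ inducing the same $\alpha$ must agree, and any isomorphism of tangentoids $D \cong R \+ M$ compatible with the kernel identification $\ker p \cong M$ is therefore unique. Assembling these observations yields the theorem.
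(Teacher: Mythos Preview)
Your proposal is correct and matches the paper's approach exactly: the paper states the theorem without a separate proof, simply noting that Propositions~\ref{proposition:tangentoids-are-solid-commutative-non-unital-algebras} and~\ref{proposition:solid-non-unital-algebras-are-tangentoids} together (with Lemma~\ref{lemma:tangentoids-have-negatives} implicit) establish the correspondence. Your added discussion of uniqueness is a bit more explicit than the paper, which defers the functorial packaging to Corollary~\ref{corollary:characterization-tangentoids-Mod-R}, but the substance is identical.
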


The correspondence of Theorem~\ref{theorem:characterization-tangentoids-Mod-R} extends to morphisms of symmetric tangentoids and commutative solid non-unital $R$-algebras. As a shorthand, let $\cSolid_R \= \cSolid(\Mod_R)$.

\begin{corollary}
\label{corollary:characterization-tangentoids-Mod-R}
The assignment defined by Proposition~\ref{proposition:tangentoids-are-solid-commutative-non-unital-algebras} which sends a symmetric tangentoid $D$ in $\Mod_R$ to a commutative solid non-unital $R$-algebra, $\Solid(D) = \ker p$ extends to an equivalence of categories:
\begin{align*}
&\Solid\colon\sTngoid(\Mod_R)\leftrightarrows\cSolid_R\colon R\+-
\end{align*}
\end{corollary}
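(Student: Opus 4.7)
The object-level correspondence is already established by Theorem~\ref{theorem:characterization-tangentoids-Mod-R}; what remains is to upgrade both assignments to functors and verify that they are mutually (quasi-)inverse. The plan is therefore to describe the action on morphisms in each direction, check that the necessary compatibilities match up, and then note that the composites are equal (or naturally isomorphic) to the identity.

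First, I would unpack the morphisms of symmetric tangentoids. Given a morphism $f\colon D\to D'$ with $D=R\+M$ and $D'=R\+M'$, the compatibility $p'\o f=p$ forces $f$ to send $M=\ker p$ into $M'=\ker p'$, and the compatibility $f\o z=z'$ forces $f$ to restrict to $\id_R$ on the first summand. Together these imply that $f$ decomposes uniquely as $f=\id_R\+g$ for some $R$-linear $g\colon M\to M'$. Define $\Solid(f)\=g$; functoriality is then immediate.

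The main step is to show that $g=\Solid(f)$ is a morphism of commutative non-unital $R$-algebras, i.e., that $g\o\alpha=\alpha'\o(g\x g)$, where $\alpha,\alpha'$ are the multiplications coming from Proposition~\ref{proposition:tangentoids-are-solid-commutative-non-unital-algebras}. Since $\alpha^{-1}=\nu$ (and similarly for the primed version), this is equivalent to $(g\x g)\o\nu=\nu'\o g$. To see this, I would use the compatibility $(f\x f)\o l=l'\o f$ together with the explicit factorization $l=l^\sharp\o(\id_R\+\nu)$ from Lemma~\ref{lemma:lift}: under the decomposition $D\x D\cong R\+M\+M\+(M\x M)$, the morphism $l$ is entirely determined by $\nu$ on the $M$-component, and the $(M\x M)$-component of $(f\x f)\o l=l'\o f$ restricted to $M$ gives exactly $(g\x g)\o\nu=\nu'\o g$. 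This is the one step that requires genuine work and will be the main obstacle; everything else is essentially bookkeeping.

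For the other direction, given a morphism $g\colon M\to M'$ of commutative solid non-unital $R$-algebras, define $R\+g\colon R\+M\to R\+M'$ in the obvious way. Compatibility with $p$, $z$, and $s$ is immediate from the biproduct structure. Compatibility with $l$ follows by writing $l=l^\sharp\o(\id_R\+\nu)$ on both sides and using that $g$ intertwines $\nu$ and $\nu'$ (equivalently $\alpha$ and $\alpha'$); compatibility with the canonical flip is automatic since both tangentoids are symmetric (Proposition~\ref{proposition:symmetric-tangentoid-cocartesian} and the symmetry hypothesis). Functoriality of $R\+-$ is immediate.

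Finally, I would check that the two composites are the identity. On morphisms, $\Solid(\id_R\+g)=g$ is clear from the definition, while $(\id_R\+\Solid(f))=f$ holds because $f$ decomposes uniquely as $\id_R\+\Solid(f)$ by the argument above. On objects, Theorem~\ref{theorem:characterization-tangentoids-Mod-R} supplies the required natural isomorphism $D\cong R\+\Solid(D)$, completing the equivalence.
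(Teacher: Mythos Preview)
Your proposal is correct and follows essentially the same approach as the paper: decompose a tangentoid morphism $f$ as $\id_R\+g$ using compatibility with $p$ and $z$, extract that $g$ is a non-unital algebra morphism from compatibility with the vertical lift, and then observe that the two assignments are mutually inverse via Theorem~\ref{theorem:characterization-tangentoids-Mod-R}. If anything you are more careful than the paper, which invokes only $p'\o f=p$ to obtain the splitting $f=\id_R\+g$ (whereas, as you note, one also needs $f\o z=z'$ to rule out an off-diagonal component); one minor quibble is that the canonical-flip compatibility follows directly from naturality of the symmetry $\sigma$, so Proposition~\ref{proposition:symmetric-tangentoid-cocartesian} is not the relevant reference there.
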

\begin{proof}
Consider a morphism $f\colon D\to D'$ of symmetric tangentoids in $\Mod_R$. Since $D\cong R\+M$ and $D'\cong R\+M'$, and $p^\prime \o f = p$, it follows that $f$ is equivalent to a morphism $\id_R\+g$ for some $R$-module morphism $g\colon M\to M'$. Specifically, $g$ is the restriction of $f$ on $M\=\ker p$. Moreover, from the compatibility between $f$ and the vertical lifts, one can see that $g$ is a morphism of non-unital $R$-algebras. Therefore, $\Solid\colon\sTngoid(\Mod_R)\to\cSolid_R$ is a functorial assignment. The unit and the counit of the adjunction $\Solid\dashv U$ are respectively:
\begin{align*}
&\eta\colon D\cong R\+ M && \epsilon\colon M\cong\ker(\pi_M\colon R\+ M\to M)=\Solid(R\+ M)
\end{align*}
From here, it is straightforward to see that this gives an equivalence of categories as desired. 
\end{proof}

\subsection{Tangentoids for algebras}
\label{section:tangentoids-algebras}
Theorem~\ref{theorem:characterization-tangentoids-Mod-R} characterizes symmetric tangentoids in $\Mod_R$. Thanks to Proposition~\ref{proposition:symmetric-tangentoids}, symmetric tangentoids in $\Mod_R$ are equivalent to tangentoids in $\cMon(\Mod_R)=\cAlg_R$. In this section, we use this correspondence to characterize tangentoids in $\cAlg_R$. Moreover, it turns out that the canonical monoid structure on a tangentoid corresponds to the algebra structure of a semidirect product. Recall that for an $R$-module $M$, the \emph{semidirect product}, also known as square-zero extension, is the commutative $R$-algebra $R \ltimes M$ whose underlying $R$-module is $R \ltimes N \= R \+ N$ equipped with multiplication $(a+m)(b+n) = ab + am + bn$, for all $a,b \in R$ and $m, n \in M$. 

\begin{theorem}
\label{theorem:characterization-tangentoids}
If $D$ is a tangentoid in $\cAlg_R$, then $D\cong R\ltimes M$, where $M \= \ker p$, and $M$ comes equipped with the structure of a commutative solid non-unital $R$-algebra $\alpha\colon M\x_RM\to M$. Conversely, each commutative solid non-unital $R$-algebra $M$ defines a tangentoid $D\=R\ltimes M$ in $\cAlg_R$.
\end{theorem}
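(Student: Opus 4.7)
The plan is to bootstrap the characterization in $\Mod_R$ up to $\cAlg_R$ via the machinery developed in Section~\ref{section:tangentoids}. The key observation is that $\cAlg_R=\cMon(\Mod_R)$ is a coCartesian monoidal category, with tensor product over $R$ as coproduct and $R$ as initial object. By Corollary~\ref{corollary:tangentoid-in-cALG-are-symmetric}, every tangentoid in $\cAlg_R$ is symmetric. Since $\Mod_R$ is complete, Remark~\ref{remark:U-preservation-limits} guarantees that the forgetful functor $U\colon\cAlg_R\to\Mod_R$ creates (and hence preserves) all limits, so Proposition~\ref{proposition:symmetric-tangentoids} yields an equivalence between tangentoids in $\cAlg_R$ and symmetric tangentoids in $\Mod_R$. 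Composing with Corollary~\ref{corollary:characterization-tangentoids-Mod-R}, the underlying $R$-module of any tangentoid $D$ in $\cAlg_R$ must be $R\+M$, where $M\=\ker p$ carries a commutative solid non-unital $R$-algebra structure $\alpha\colon M\x M\to M$.

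What remains is to identify the commutative $R$-algebra structure of $D$ with the semidirect product $R\ltimes M$. By Lemma~\ref{lemma:tangentoids-are-monoids}, every tangentoid carries a canonical commutative monoid structure $(\mu,\eta)$ in the ambient monoidal category; by the Eckmann--Hilton argument used in the proof of Proposition~\ref{proposition:symmetric-tangentoids}, this canonical monoid structure must coincide with the original $R$-algebra multiplication on $D$ as an object of $\cAlg_R=\cMon(\Mod_R)$. So it suffices to compute $\mu=s\o\<\id_D\x p,p\x\id_D\>$ on a pure tensor $(a+m)\x(b+n)\in D\x_RD$: the map $\id_D\x p$ returns $(a+m)\.b=ab+bm$ via the right unitor, the map $p\x\id_D$ returns $a\.(b+n)=ab+an$ via the left unitor, and pairing these inside $D_2\cong R\+M\+M$ gives $(ab,bm,an)$, which $s$ sends to $ab+bm+an$. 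This is precisely the semidirect product multiplication, yielding $D\cong R\ltimes M$ as commutative $R$-algebras; the unit $\eta=z$ is then the canonical inclusion $R\hookrightarrow R\ltimes M$.

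Conversely, given a commutative solid non-unital $R$-algebra $M$, Proposition~\ref{proposition:solid-non-unital-algebras-are-tangentoids} equips $R\+M$ with a symmetric tangentoid structure in $\Mod_R$. Transporting this along the equivalence of Proposition~\ref{proposition:symmetric-tangentoids} produces a tangentoid in $\cAlg_R$ whose underlying commutative $R$-algebra, by the element-level calculation above, is exactly the semidirect product $R\ltimes M$. The main obstacle is the middle step: verifying that the canonical monoid structure of Lemma~\ref{lemma:tangentoids-are-monoids} really does agree with the underlying $R$-algebra multiplication of $D$. This is precisely what the Eckmann--Hilton argument inside Proposition~\ref{proposition:symmetric-tangentoids} secures, and once this is in place the identification with the semidirect product reduces to the short bookkeeping computation just described.
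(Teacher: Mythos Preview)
Your proof is correct and follows essentially the same route as the paper: reduce to symmetric tangentoids in $\Mod_R$ via Proposition~\ref{proposition:symmetric-tangentoids}, invoke the classification there, and then identify the canonical monoid structure of Lemma~\ref{lemma:tangentoids-are-monoids} with the semidirect product multiplication. You have in fact supplied more detail than the paper does---the paper simply asserts that ``by unwrapping this monoid structure, it is easy to show'' the algebra is $R\ltimes M$, whereas you carry out the element-level computation of $\mu$ explicitly and correctly pin down the role of the Eckmann--Hilton step.
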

\begin{proof}
By Proposition~\ref{proposition:symmetric-tangentoids}, a tangentoid $D$ in $\cAlg_R$ is equivalent to a symmetric tangentoid of $\Mod_R$ equipped with the commutative monoid structure of Lemma~\ref{lemma:tangentoids-are-monoids}. By unwrapping this monoid structure, it is easy to show that a tangentoid $D$ of $\cAlg_R$ coincides with the semidirect product $R\ltimes M$ between the ring $R$ and a commutative solid non-unital $R$-algebra $M$.
\end{proof}

\begin{corollary}
\label{corollary:characterization-tangentoids}
The assignment defined by Theorem~\ref{theorem:characterization-tangentoids} which sends a tangentoid of $\cAlg_R$ to a commutative solid non-unital $R$-algebra extends to an equivalence of categories:
\begin{align*}
&\Solid\colon\Tngoid(\cAlg_R)\leftrightarrows\cSolid_R\colon R\ltimes-
\end{align*}
\end{corollary}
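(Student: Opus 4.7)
The plan is to chain together three equivalences already established in the excerpt, and then verify the resulting functors match the description in the statement. First, since $\cAlg_R$ is coCartesian, Proposition~\ref{proposition:symmetric-tangentoid-cocartesian} gives $\Tngoid(\cAlg_R)=\sTngoid(\cAlg_R)$. Second, since $\cAlg_R=\cMon(\Mod_R)$ and the forgetful functor $U\colon\cAlg_R\to\Mod_R$ creates limits (Remark~\ref{remark:U-preservation-limits}), Proposition~\ref{proposition:symmetric-tangentoids} yields an isomorphism of categories $\sTngoid(\Mod_R)\cong\sTngoid(\cAlg_R)$ whose inverse is $U$. Third, Corollary~\ref{corollary:characterization-tangentoids-Mod-R} provides the equivalence $\Solid\colon\sTngoid(\Mod_R)\leftrightarrows\cSolid_R\colon R\+-$. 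Composing these in sequence produces an equivalence between $\Tngoid(\cAlg_R)$ and $\cSolid_R$.

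It remains to identify this composite equivalence with the pair described in the statement. The forward composite $\Tngoid(\cAlg_R)\xrightarrow{U}\sTngoid(\Mod_R)\xrightarrow{\Solid}\cSolid_R$ evidently sends a tangentoid $D$ to $\ker p$, matching the $\Solid$ functor of Theorem~\ref{theorem:characterization-tangentoids}. The reverse composite $\cSolid_R\xrightarrow{R\+-}\sTngoid(\Mod_R)\to\sTngoid(\cAlg_R)$ sends $M$ to the symmetric tangentoid $R\+M$ in $\Mod_R$ and then equips $R\+M$ with the canonical commutative monoid structure of Lemma~\ref{lemma:tangentoids-are-monoids}, thereby lifting it to a tangentoid in $\cAlg_R$.

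The only computational step is to verify that this canonical monoid structure on $R\+M$ coincides with the semidirect product multiplication on $R\ltimes M$. Unpacking $\mu=s\o\<\id_D\x p,p\x\id_D\>$ on a generic simple tensor $(a+x)\x(b+y)\in D\x D$ with $D=R\+M$ yields the element $(ab,bx,ay)\in R\+M\+M=D_2$, which the sum map $s$ collapses to $ab+ay+bx$, precisely the multiplication of $R\ltimes M$; the unit $\eta=z$ is the canonical inclusion $R\hookrightarrow R\ltimes M$. Functoriality of $\Solid$ and $R\ltimes-$ together with naturality of the unit and counit are inherited from the constituent equivalences. There is no conceptual obstacle: the main subtlety is to recognize, via the Eckmann--Hilton argument at the end of the proof of Proposition~\ref{proposition:symmetric-tangentoids}, that any $\cAlg_R$-monoid structure compatible with the tangentoid necessarily coincides with the canonical one of Lemma~\ref{lemma:tangentoids-are-monoids}, ensuring that the identification with $R\ltimes M$ is well defined.
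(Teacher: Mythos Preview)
Your proposal is correct and follows the same approach the paper implicitly takes: the paper states this corollary without proof, treating it as immediate from Theorem~\ref{theorem:characterization-tangentoids} together with the module-level equivalence of Corollary~\ref{corollary:characterization-tangentoids-Mod-R}, and your argument is precisely the careful unpacking of that chain (via Proposition~\ref{proposition:symmetric-tangentoid-cocartesian} and Proposition~\ref{proposition:symmetric-tangentoids}), including the verification that the canonical monoid structure on $R\+M$ is the semidirect product $R\ltimes M$.
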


Thanks to the characterization of tangentoids in $\cAlg_R$, we can now construct new examples of tangent structures on $\cAlg_R$. In particular, when $R = \Z$, $\cAlg_\Z$ is precisely $\cRing$, the category of commutative rings. 

\begin{example}
\label{example:Q-tangentoid-non-exponentiable}
As discussed in Example~\ref{example:solid-non-unital-algebra-Q}, $\Q$ comes equipped with a commutative solid semigroup structure in the category of Abelian groups. Therefore, by Theorem~\ref{theorem:characterization-tangentoids}, $\Z\ltimes\Q$ defines a tangentoid in $\cRing$. This induces a tangent structure $[(\Z\ltimes\Q) \x_R-]$ on $\cRing$, whose tangent bundle functor can equivalently be described by $T(R) = R \ltimes (\Q \otimes R)$. 
\end{example}

\subsection{Infinitesimal objects for affine schemes}
\label{section:infinitesimal-objects}
Thanks to Lemma~\ref{lemma:infinitesimal-objects-tangentoids}, infinitesimal objects, and therefore representable tangent structures, on $\Aff_R$ correspond to coexponentiable tangentoids in $\cAlg_R$. To better characterize these tangentoids, let us first recall Niefield's theorem that classifies the coexponentiable objects in $\cAlg_R$. 

\begin{theorem}
\label{lemma:nielfield-characterization-exponential-affine-scheme}
\cite[Theorem~4.3]{niefield:cartesianness} A commutative $R$-algebra $A$ is coexponentiable in $\cAlg_R$ if and only if its underlying $R$-module is finitely generated and projective.
\end{theorem}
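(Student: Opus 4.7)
The plan is to reformulate the coexponentiability of a commutative unital $R$-algebra $A$ in $\cAlg_R$ as the condition that $A\otimes_R-\colon\cAlg_R\to\cAlg_R$ admits a left adjoint, which---since $\cAlg_R$ is locally presentable---is equivalent to preservation of all small limits. On the module side, it is classical that $A\otimes_R-\colon\Mod_R\to\Mod_R$ preserves all small limits if and only if $A$ is finitely generated and projective as an $R$-module; this is a standard characterization of dualizability in $\Mod_R$, the left adjoint being $\mathrm{Hom}_R(A,-)\cong A^*\otimes_R-$ with $A^*\=\mathrm{Hom}_R(A,R)$. Thus the theorem reduces to showing that algebra-level and module-level limit preservation of $A\otimes_R-$ are equivalent.

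The direction ($\Leftarrow$) is the easy half: since the forgetful functor $U\colon\cAlg_R\to\Mod_R$ is monadic, it creates limits, so preservation by $A\otimes_R-$ at the module level automatically lifts to $\cAlg_R$ via the identification $U(A\otimes_R B)=A\otimes_R U(B)$. For the harder direction ($\Rightarrow$), I would bridge the two levels through the square-zero extension functor $R\ltimes-\colon\Mod_R\to\cAlg_R/R$, which is fully faithful and limit-preserving. Given a diagram $\{M_i\}_{i\in I}$ of $R$-modules, the connected diagram in $\cAlg_R$ formed by the algebras $R\ltimes M_i$ together with their augmentations to $R$ has limit $R\ltimes\lim_iM_i$. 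Applying the hypothesis that $A\otimes_R-$ preserves this limit, and using the natural isomorphism $A\otimes_R(R\ltimes N)\cong A\ltimes(A\otimes_R N)$, one extracts $A\otimes_R\lim_iM_i\cong\lim_i(A\otimes_R M_i)$ as $R$-modules by reading off the square-zero ideals on both sides.

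The main obstacle is the delicate interplay between limits in $\cAlg_R$ and limits in $\Mod_R$ under the square-zero embedding. A naive comparison fails because products of the algebras $R\ltimes M_i$ in $\cAlg_R$ are not of the form $R\ltimes\prod_i M_i$---they contain multiple copies of the base ring. Working instead with the slice category $\cAlg_R/R$, where products become pullbacks over $R$ in $\cAlg_R$ (connected limits that the square-zero embedding preserves), is the key technical maneuver allowing the algebra-level hypothesis to be applied to precisely those limits that encode the module-level limits.
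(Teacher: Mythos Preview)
The paper does not supply its own proof of this theorem; it merely cites Niefield's original result \cite[Theorem~4.3]{niefield:cartesianness} and uses it as a black box. So there is nothing in the paper to compare your argument against.

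That said, your proposal is sound and gives a clean self-contained argument. The reduction to limit preservation via local presentability is the right framing; you should perhaps make explicit that $A\otimes_R-$ is automatically accessible (it preserves filtered colimits), so that the special adjoint functor theorem applies. The $(\Leftarrow)$ direction is straightforward, as you say, via creation of limits by the monadic forgetful functor. For $(\Rightarrow)$, the square-zero extension manoeuvre is exactly what is needed: the functor $R\ltimes-\colon\Mod_R\to\cAlg_R/R$ is a right adjoint (its left adjoint sends an augmented algebra $B\to R$ to $I_B/I_B^2$), hence preserves limits; and limits in the slice $\cAlg_R/R$ are computed as limits of the associated \emph{connected} diagrams in $\cAlg_R$ obtained by adjoining the terminal vertex $R$, so the hypothesis on $A\otimes_R-$ applies.

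One point worth tightening: when you ``read off the square-zero ideals on both sides,'' you are implicitly using that the canonical comparison map $A\otimes_R\lim D\to\lim(A\otimes_R D)$ in $\cAlg_R$, for $D$ your connected diagram, is a morphism of augmented $A$-algebras over $A$. This holds because the terminal vertex of the diagram is $R$, which $A\otimes_R-$ sends to $A$, and the comparison map is compatible with projection to that vertex. Taking kernels of the augmentations then recovers precisely the module-level comparison map $A\otimes_R\lim_iM_i\to\lim_i(A\otimes_RM_i)$, and your argument is complete.
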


Thanks to Theorem~\ref{lemma:nielfield-characterization-exponential-affine-scheme}, we can finally characterize coexponentiable tangentoids of $\cAlg_R$. By a \textbf{finitely generated projective} (\textbf{commutative}) \textbf{solid non-unital $R$-algebra}, we mean a solid (commutative) non-unital $R$-algebra whose underlying $R$-module is finitely generated and projective. 

\begin{theorem}
\label{theorem:characterization-infinitesimal-objects}
The coexponentiable tangentoids of $\cAlg_R$ are precisely the $R$-algebras $D=R\ltimes M$ associated with a finitely generated projective commutative solid non-unital $R$-algebra $M$.
\end{theorem}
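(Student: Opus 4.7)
The plan is to combine three results already available in the paper: the characterization of tangentoids in $\cAlg_R$ (Theorem~\ref{theorem:characterization-tangentoids}), Niefield's theorem (Theorem~\ref{lemma:nielfield-characterization-exponential-affine-scheme}), and the dual of Lemma~\ref{lemma:Frankland-lemma-representability}.

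First, by Theorem~\ref{theorem:characterization-tangentoids}, every tangentoid $D$ in $\cAlg_R$ is isomorphic to a square-zero extension $R\ltimes M$ for a unique commutative solid non-unital $R$-algebra $M$, so the remaining task is to determine precisely when such a $D$ is coexponentiable as a tangentoid. By Definition~\ref{definition:coexponentiable-tangentoid}, this requires each $n$-fold pullback $D_n$ of $p\colon D\to R$ along itself to be coexponentiable in $\cAlg_R$, not merely $D=D_1$.

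Second, I would reduce this to the case $n=1$ using the dual of Lemma~\ref{lemma:Frankland-lemma-representability}. Applying that lemma to the category $\Aff_R=\cAlg_R^\op$, which has finite products (coming from $\x_R$ in $\cAlg_R$) and finite pullbacks (coming from pushouts in $\cAlg_R$), and to the coprojection $p\colon\*\to D$ of $\Aff_R$ (which is the projection $p\colon D\to R$ of $\cAlg_R$), one concludes that if $D$ is coexponentiable in $\cAlg_R$, then all the $D_n$ are coexponentiable as well. Since a tangentoid structure guarantees the existence of the pullbacks $D_n$, it therefore suffices to check that $D$ itself is coexponentiable.

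Third, by Niefield's theorem (Theorem~\ref{lemma:nielfield-characterization-exponential-affine-scheme}), $D$ is coexponentiable in $\cAlg_R$ if and only if the underlying $R$-module of $D$ is finitely generated and projective. Since the underlying $R$-module of $D=R\ltimes M$ is $R\+M$, and since $R$ is trivially finitely generated and projective over itself, the module $R\+M$ is finitely generated and projective if and only if $M$ is. Combining these three steps gives the claimed characterization.

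The main obstacle is essentially bookkeeping: one must carefully dualize Lemma~\ref{lemma:Frankland-lemma-representability} in order to transfer its conclusion from $\Aff_R$ to $\cAlg_R$, and verify that the preservation-of-pullbacks clause in the definition of a tangentoid is not violated. Beyond that, all the serious structural work has already been carried out in the classification of tangentoids and in Niefield's theorem, so the proof itself should be short.
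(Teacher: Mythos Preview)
Your proposal is correct and follows essentially the same route as the paper: classify tangentoids via Theorem~\ref{theorem:characterization-tangentoids}, reduce coexponentiability of all $D_n$ to that of $D$ via Lemma~\ref{lemma:Frankland-lemma-representability} applied in $\Aff_R$, and then invoke Niefield's theorem. Your write-up is in fact slightly more explicit than the paper's, since you spell out the dualization to $\Aff_R$ and the passage from ``$R\+M$ is finitely generated projective'' to ``$M$ is finitely generated projective'', both of which the paper leaves implicit.
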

\begin{proof}
Theorem~\ref{theorem:characterization-tangentoids} characterizes tangentoids of $\cAlg_R$ in terms of $R$-algebras $D\cong R\ltimes M$ where $M$ is a solid commutative non-unital $R$-algebra. By definition, a tangentoid $D=R\ltimes M$ is coexponentiable provided that each $D_n$ is coexponentiable. However, since $\cAlg_R$ is complete, by Lemma~\ref{lemma:Frankland-lemma-representability}, a tangentoid $D$ is coexponentiable in $\cAlg_R$ if and only if the underlying object $D$ is coexponentiable. Moreover, Theorem~\ref{lemma:nielfield-characterization-exponential-affine-scheme} establishes that coexponentiable $R$-algebras are equivalent to finitely generated and projective $R$-modules. However, $R\+M$ is a finitely generated and projective $R$-modules if and only if so is $M$. To show this, notice that $M$ is projective if and only if there exists an $R$-module $N$ such that $M\+N$ is a free $R$-module. However, if $M$ is projective, then $R\+M\+N$ is free, so $R\+M$ is projective. Conversely, if $R\+M$ is projective, then $R\+M\+N$ is free, thus, there exists a module $R\+N$ for which $M\+(R\+N)$ is free; thus, $M$ is projective. Finally, it is immediate to see that $M$ is finitely generated if and only if so is $R\+M$. Thus, coexponentiable tangentoids are classified by finitely generated projective commutative solid non-unital $R$-algebras. 
\end{proof}

Since coexponentiable tangentoids correspond to infinitesimal objects in the opposite category, we also have a characterization of representable tangent structures on $\Aff_R$.

\begin{corollary}
\label{corollary:classication-representable-tangent-structures}
A tangent structure $\TT$ on $\Aff_R$ is representable if and only if there is a finitely generated projective commutative solid non-unital $R$-algebra $M$ such that the tangent bundle functor $\T$ is right adjoint to the functor $D\x_R-$, where $D\=R\ltimes M$.
\end{corollary}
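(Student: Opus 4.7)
The plan is to chain together three equivalences already established in the paper. First, by Proposition~\ref{proposition:infinitesimal-objects-representable-tang-cats}, specifying a representable tangent structure on a category with finite products is equivalent to specifying an infinitesimal object in that category, where the tangent bundle functor becomes $(-)^D$. Second, by Lemma~\ref{lemma:infinitesimal-objects-tangentoids}, an infinitesimal object in $\Aff_R$ is the same data as a coexponentiable tangentoid in $\cAlg_R$ endowed with its coCartesian monoidal structure. Third, Theorem~\ref{theorem:characterization-infinitesimal-objects} identifies these coexponentiable tangentoids with $R$-algebras of the form $R \ltimes M$ for $M$ a finitely generated projective commutative solid non-unital $R$-algebra. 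Composing these three bijections yields the desired classification of representable tangent structures on $\Aff_R$.

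It remains only to check that the adjunction is of the type asserted in the statement. Under the duality $\Aff_R = \cAlg_R^\op$, the product $\times$ in $\Aff_R$ corresponds to the tensor product $\otimes_R$ in $\cAlg_R$ (which is the coproduct there, and hence the coCartesian monoidal product). By Definition~\ref{definition:coexponentiable-tangentoid}, coexponentiability of a tangentoid $D$ in $\cAlg_R$ means that the functor $D \otimes_R - \colon \cAlg_R \to \cAlg_R$ admits a left adjoint; passing to the opposite category, this left adjoint is precisely a right adjoint of $D \times - \colon \Aff_R \to \Aff_R$. By Lemma~\ref{lemma:coexponentiable-tangentoid-tangent-category}, this right adjoint is exactly the tangent bundle functor $\T$ of the dualized tangent structure on $\Aff_R$, which is what the corollary claims. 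The same reasoning applies to the higher pullbacks $D_n$, since coexponentiability of each $D_n$ is part of the definition of a coexponentiable tangentoid (and in any case is automatic by Lemma~\ref{lemma:Frankland-lemma-representability}).

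There is no real obstacle here, since the corollary is a formal repackaging of Theorem~\ref{theorem:characterization-infinitesimal-objects} through the dictionary relating infinitesimal objects to representable tangent structures. The only point requiring care is tracking the reversal of left/right adjoints when passing between $\cAlg_R$ and $\Aff_R$, and ensuring the monoidal structures on each side are correctly identified (tensor product of algebras on one side, categorical product of affine schemes on the other). Everything else is already encoded in the cited lemmas and theorems.
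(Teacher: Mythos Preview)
Your proposal is correct and follows the same approach as the paper: the paper states this corollary with only the one-line justification ``since coexponentiable tangentoids correspond to infinitesimal objects in the opposite category, we also have a characterization of representable tangent structures on $\Aff_R$,'' and your argument simply spells out that sentence by chaining Proposition~\ref{proposition:infinitesimal-objects-representable-tang-cats}, Lemma~\ref{lemma:infinitesimal-objects-tangentoids}, and Theorem~\ref{theorem:characterization-infinitesimal-objects}. Your added paragraph tracking the left/right adjoint reversal under $\Aff_R = \cAlg_R^\op$ is a welcome clarification but not a departure from the paper's reasoning.
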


When the base ring $R$ is a principal ideal domain (PID), the characterization of Theorem~\ref{theorem:characterization-infinitesimal-objects} drastically simplifies, thanks to the following well-known fact. 

\begin{lemma}
\label{lemma:PID-projective-free-modules}~\cite[Theorem~9.8]{rotman:advanced-modern-algebra} The projective $R$-modules of a PID ring $R$ are free.
\end{lemma}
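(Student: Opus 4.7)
The plan is to reduce the statement to the more classical fact that over a PID, every submodule of a free module is free. Since any projective $R$-module $P$ is by definition a direct summand of a free $R$-module $F$, there exists a module $P'$ with $F \cong P \+ P'$; in particular $P$ embeds as a submodule of $F$. So it suffices to prove: if $R$ is a PID and $F$ is a free $R$-module, then every submodule $N \subseteq F$ is free.

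For the finitely generated case I would induct on the rank $n$ of the ambient free module $F = R^n$. Given $N \subseteq R^n$, consider the projection $\pi \colon R^n \to R$ onto the last coordinate. Its image $\pi(N)$ is an ideal of $R$, hence of the form $(a)$ since $R$ is a PID. If $a = 0$, then $N \subseteq \ker \pi \cong R^{n-1}$ and the inductive hypothesis finishes the job. Otherwise, choose $x \in N$ with $\pi(x) = a$; one checks that $N = (N \cap \ker \pi) \+ Rx$, where $N \cap \ker \pi$ sits inside $R^{n-1}$ and so is free by induction, while $Rx \cong R$ since $a$ is a non-zero-divisor. Hence $N$ is free.

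For the general case, I would well-order a basis of $F$ and argue by transfinite induction. Writing $F = \bigoplus_{\alpha < \lambda} R e_\alpha$ for some ordinal $\lambda$, set $F_\alpha \= \bigoplus_{\beta < \alpha} R e_\beta$ and $N_\alpha \= N \cap F_\alpha$. Projecting onto the $\alpha$-th coordinate shows that $N_{\alpha+1}/N_\alpha$ embeds as an ideal of $R$, which is either $0$ or isomorphic to $R$, and hence in either case is projective. Therefore each short exact sequence $0 \to N_\alpha \to N_{\alpha+1} \to N_{\alpha+1}/N_\alpha \to 0$ splits, letting us choose at each successor stage a (possibly empty) singleton that augments a basis of $N_\alpha$ to a basis of $N_{\alpha+1}$. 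At limit ordinals one takes $N_\alpha = \bigcup_{\beta < \alpha} N_\beta$, whose basis is the union of the bases chosen so far. The main obstacle is precisely this bookkeeping at limit stages: one must verify that the bases built at successor steps are genuinely compatible under inclusion so that their union is again a basis, which requires that the splittings be chosen coherently throughout the transfinite process. Once this is set up, the PID hypothesis enters only through the elementary observation that ideals of $R$ are principal and hence free of rank at most one.
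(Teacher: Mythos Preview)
The paper does not supply its own proof of this lemma; it simply cites it as \cite[Theorem~9.8]{rotman:advanced-modern-algebra} and moves on. Your sketch is the standard textbook argument and is correct: reduce to the statement that submodules of free modules over a PID are free, then induct on rank in the finitely generated case and run a transfinite induction in general. Your finitely generated argument is clean and complete; the transfinite case is honestly flagged as needing care with coherence of the chosen splittings, which is the right caveat.

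It is worth noting that for the paper's purposes only the finitely generated case is ever used: in the proof of Theorem~\ref{theorem:PID-characterization} the module $M$ is already assumed finitely generated and projective, so your inductive argument on $R^n$ already suffices and the transfinite machinery is unnecessary here.
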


By applying Lemma~\ref{lemma:PID-projective-free-modules} we find that there are only two coexponentiable tangentoids in $\cAlg_R$ when $R$ is a PID.

\begin{theorem}
\label{theorem:PID-characterization}
Let $R$ be a PID. Then the only coexponentiable tangentoids in $\cAlg_R$ is $R$ itself and the ring of dual numbers $R[\epsilon]$.
\end{theorem}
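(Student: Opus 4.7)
The plan is to combine Theorem~\ref{theorem:characterization-infinitesimal-objects} with Lemma~\ref{lemma:PID-projective-free-modules}. By the former, every coexponentiable tangentoid in $\cAlg_R$ is, up to isomorphism, of the form $D = R \ltimes M$ where $M$ is a finitely generated projective commutative solid non-unital $R$-algebra. Since $R$ is a PID, the latter forces the underlying $R$-module of $M$ to be free of some finite rank $n \in \N$. It therefore suffices to classify commutative solid non-unital $R$-algebra structures on $R^n$, and then to identify the resulting tangentoids $R \ltimes R^n$ up to isomorphism (using the equivalence of Corollary~\ref{corollary:characterization-tangentoids}).

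The key observation, and the main obstacle, is that solidity requires the multiplication $\alpha\colon M \otimes_R M \to M$ to be an $R$-linear isomorphism. Writing $M \cong R^n$ yields $M \otimes_R M \cong R^{n^2}$, so $\alpha$ produces an isomorphism $R^{n^2} \cong R^n$ of $R$-modules. Since every nonzero commutative ring has the invariant basis number property (which for a PID follows immediately by passing to the fraction field and taking dimensions), this forces $n^2 = n$, hence $n \in \{0, 1\}$. This is the only step where genuine structure of $R$ is used; the rest is routine.

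The two surviving cases are then immediate. If $n = 0$, then $M = 0$ and $D \cong R \ltimes 0 = R$, recovering the trivial tangentoid of Example~\ref{example:trivial}. If $n = 1$, then $M \cong R$ as an $R$-module, and the multiplication $\alpha$ corresponds to $x \otimes y \mapsto u\,xy$ for some unit $u \in R^\times$, with associativity and commutativity holding automatically. A direct computation shows that the $R$-linear map $x \mapsto u^{-1}x$ is an isomorphism of non-unital $R$-algebras from $R$, equipped with its standard multiplication, onto $M$; by Corollary~\ref{corollary:characterization-tangentoids} this lifts to an isomorphism of the associated tangentoids. Hence $D \cong R \ltimes R \cong R[\epsilon]$, identifying the generator of the $M$-summand with $\epsilon$ (whose square vanishes in the semidirect product), which completes the classification.
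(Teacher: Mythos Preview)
Your proof is correct and follows essentially the same route as the paper's: reduce to finitely generated free modules via Theorem~\ref{theorem:characterization-infinitesimal-objects} and Lemma~\ref{lemma:PID-projective-free-modules}, then use that $R^{n^2}\cong R^n$ forces $n\in\{0,1\}$ by invariant basis number. Your treatment of the $n=1$ case is slightly more careful than the paper's, in that you explicitly check that any solid non-unital algebra structure on $R$ is isomorphic to the standard one (via $x\mapsto u^{-1}x$) before invoking Corollary~\ref{corollary:characterization-tangentoids}; the paper simply asserts $D=R\ltimes R=R[\epsilon]$ without this verification.
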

\begin{proof}
By Theorem~\ref{theorem:characterization-infinitesimal-objects}, the $R$-module $M$ associated with a coexponentiable tangentoid of $\cAlg_R$ is a finitely generated projective commutative solid non-unital $R$-algebra. Since $M$ is projective, by Lemma~\ref{lemma:PID-projective-free-modules}, $M$ is free. Moreover, since $M$ is finitely generated, $M$ is isomorphic to $n$ copies of $R$, for some non-negative integer $n$, that is, $M\cong R^n$. Since $M$ is a solid non-unital $R$-algebra we have the following chain of isomorphisms:
\begin{align*}
&R^{n^2}\cong R^n\x_RR^n\cong M\x_RM\cong M\cong R^n
\end{align*}
which holds only if $n^2=n$, that is when $n=0,1$. Therefore, $M=0$ or $M=R$, meaning that $D=R$ or $D=R\ltimes R=R[\epsilon]$. 
\end{proof}

Using again that coexponentiable tangentoids are infinitesimal objects in the opposite category, there are only two representable tangent structures on $\Aff_R$ when $R$ is a PID.

\begin{corollary}
\label{corollary:PID-infinitesimal-are-R-D}
When the base ring $R$ is a PID, there are precisely two representable tangent structures on $\Aff_R$, corresponding to the trivial tangent structure (Example~\ref{example:trivial}) and the tangent structure $\TT^\Omega$ of Example~\ref{example:affine-canonical}.
\end{corollary}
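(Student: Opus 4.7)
The plan is to combine Theorem~\ref{theorem:PID-characterization} with the correspondence between coexponentiable tangentoids in $\cAlg_R$ and infinitesimal objects in $\Aff_R$, and then translate these infinitesimal objects into representable tangent structures via Proposition~\ref{proposition:infinitesimal-objects-representable-tang-cats}.

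First, by Lemma~\ref{lemma:infinitesimal-objects-tangentoids}, infinitesimal objects of $\Aff_R$ are precisely the coexponentiable tangentoids of the coCartesian monoidal category $\cAlg_R$. Theorem~\ref{theorem:PID-characterization} already classifies these when $R$ is a PID: they are (up to isomorphism) the initial object $R$ and the algebra of dual numbers $R[\epsilon]$. Therefore, there are exactly two infinitesimal objects in $\Aff_R$, and by Proposition~\ref{proposition:infinitesimal-objects-representable-tang-cats}, exactly two representable tangent structures on $\Aff_R$.

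Second, I would identify each of the two tangent structures. The tangentoid $R$ in $\cAlg_R$ corresponds to the terminal object in $\Aff_R$ (which is the ``infinitesimal object'' described in Example~\ref{example:trivial-infinitesimal}); by Proposition~\ref{proposition:infinitesimal-objects-representable-tang-cats} the induced tangent structure on $\Aff_R$ has tangent bundle functor $(-)^\ast$, which is naturally isomorphic to the identity, and hence recovers the trivial tangent structure of Example~\ref{example:trivial}. The tangentoid $R[\epsilon]$ in $\cAlg_R$ corresponds to the affine scheme $\Spec(R[\epsilon])$ in $\Aff_R$, which is precisely the infinitesimal object described in Example~\ref{example:dual-numbers-infinitesimal}, whose induced representable tangent structure is $\TT^\o$ from Example~\ref{example:affine-canonical}.

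The only subtle point in the proof is making explicit that the induced tangent structures genuinely coincide with the trivial one and $\TT^\o$. For the former this is immediate from the universal property of the terminal object, and for the latter it is already noted in Example~\ref{example:dual-numbers-infinitesimal} (citing~\cite[Section~5.2]{cockett:tangent-cats}) that the tangent structure of Example~\ref{example:affine-canonical} is representable by $R[\epsilon]$. There is no real obstacle here, and the proof can be dispatched in just a couple of lines by citing Theorem~\ref{theorem:PID-characterization}, Lemma~\ref{lemma:infinitesimal-objects-tangentoids}, and the two relevant examples.
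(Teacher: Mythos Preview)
Your proposal is correct and follows essentially the same approach as the paper's proof, which simply cites Theorem~\ref{theorem:PID-characterization} and asserts that the two coexponentiable tangentoids $R$ and $R[\epsilon]$ correspond to the trivial tangent structure and $\TT^\o$. Your version is more explicit about the intermediate correspondences (Lemma~\ref{lemma:infinitesimal-objects-tangentoids} and Proposition~\ref{proposition:infinitesimal-objects-representable-tang-cats}), but the underlying argument is identical.
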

\begin{proof}
Thanks to Theorem~\ref{theorem:PID-characterization}, the only two coexponentiable tangentoids of $\cAlg_R$ are $R$ and $R[\epsilon]$, which correspond precisely to the trivial tangent structure on $\Aff_R$ and $\TT^\Omega$, respectively.
\end{proof}

In particular, when $R$ is the ring of integers $\Z$, $R$ is a PID, therefore we have the following result.

\begin{corollary}
\label{corollary:cRing-only-R-D}
In the category of affine schemes $\Aff$, the only representable tangent structures are the trivial one and $\TT^\Omega$.
\end{corollary}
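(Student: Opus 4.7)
The plan is to observe that this corollary is essentially an instance of Corollary~\ref{corollary:PID-infinitesimal-are-R-D} applied to the specific base ring $R = \Z$. Unpacking the conventions: the unadorned category $\Aff$ of affine schemes means affine schemes over $\Z$, which by definition is the opposite of $\cAlg_\Z = \cRing$, so $\Aff = \Aff_\Z$.

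First I would invoke the well-known fact that the ring of integers $\Z$ is a principal ideal domain, since every ideal of $\Z$ is of the form $(n)$ for some non-negative integer $n$. With $R = \Z$ being a PID, all the hypotheses of Corollary~\ref{corollary:PID-infinitesimal-are-R-D} are satisfied. Applying that corollary directly yields that the only representable tangent structures on $\Aff_\Z = \Aff$ are the trivial one from Example~\ref{example:trivial} and the tangent structure $\TT^\o$ from Example~\ref{example:affine-canonical}, which is the one induced by K\"ahler differentials over $\Z$.

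There is no real obstacle to this proof: it is a one-line consequence obtained by specializing the general PID result to $R = \Z$. The only minor bookkeeping point is to confirm the convention that $\Aff$ without a subscript refers to affine schemes over $\Z$, so that the corresponding $R$-algebra category is precisely $\cRing$; this is the standard convention and is implicit in the surrounding discussion.
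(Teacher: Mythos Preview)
Your proposal is correct and matches the paper's approach exactly: the paper simply observes that $\Z$ is a PID and states this corollary as an immediate specialization of Corollary~\ref{corollary:PID-infinitesimal-are-R-D}, without giving a separate proof.
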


On the one hand, it is important to stress that there could be other non-coexponentiable tangentoids in $\cAlg_R$, even when the base ring is a PID. 

\begin{example}
\label{example:Q-tangentoid-non-exponentiable-2} While $\Z\ltimes\Q$ is a tangentoid in $\cRing$, since $\Q$ is not finitely generated as a $\Z$-module, this tangentoid is not coexponentiable. Therefore, the induced tangent structure is not adjunctable, meaning that it does not induce a representable tangent structure on the category $\Aff$.
\end{example}

On the other hand, it is natural to wonder if there are any other coexponentiable tangentoid when the base ring fails to be a PID.

\begin{example}
\label{example:non-PID-example}
Let $S$ be any commutative ring. The product ring $R\=S\times S$ is not a PID since it has a zero divisor, e.g., $(1,0)$. It is straightforward to see that $S$ is an $R$-module with action:
\begin{align*}
&(a,b)\.x\=ax
\end{align*}
for $(a,b)\in R$ and $x\in S$. Furthermore, $S\x_RS$ is the $R$-module freely generated by terms of type $x\x_Ry$, satisfying the following relation:
\begin{align*}
&((a,b)\.x)\x_Ry=x\x_R((a,b)\.y)
\end{align*}
Therefore:
\begin{align*}
&x\x_Ry=((x,0)\.1)\x_Ry=1\x_R((x,0)\.y)=\1\x_R(xy)
\end{align*}
So, $S\x_RS\cong S$, and it is easy to check that this coincides with the multiplication map of $S$. Thus, $S$ is a commutative solid non-unital $R$-algebra. Furthermore, since $S\times S=R$ is free, $S$ is projective and finally, $S$ is finitely generated, since:
\begin{align*}
&S=R/(1,0)
\end{align*}
Therefore, $S$ induces a coexponentiable tangentoid $R\ltimes S$ in $\cAlg_R$, which is neither $R$ or $R[\epsilon]$.
\end{example}


\begingroup

\endgroup

\end{document}